\newcommand{\abs}[1]{{\left\vert #1\right\vert}}
\newcommand{\R}{{\mathbb R}}
\newcommand{\sphere}{{\mathbb S^{d-1}}}
\newcommand{\RP}{{\mathbb{RP}^{d-1}}}
\newcommand{\h}{\mathcal{H}}
\newcommand{\SSS}{\mathbb{S}}
\newcommand{\eps}{\varepsilon}
\newcommand{\inti}{-\hspace{-0.38cm}\int}
\newcommand{\supp}{{\rm supp \,}}
\newcommand{\diam}{{\rm diam \,}}
\DeclareMathOperator{\dist}{dist}
\newcommand{\be}{\begin{equation}}
\newcommand{\ee}{\end{equation}}
\newtheorem{theorem}{Theorem}[section]
\newtheorem*{theorem*}{Theorem}
\newtheorem{proposition}[theorem]{Proposition}
\newtheorem{lemma}[theorem]{Lemma}
\newtheorem{corollary}[theorem]{Corollary}
\theoremstyle{definition}
\newtheorem{remark}[theorem]{Remark}
\newtheorem*{question*}{Lifting question}
\title{Lifting of $\RP$-valued maps in $BV$ and applications to uniaxial $Q$-tensors. With an appendix on an intrinsic $BV$-energy for manifold-valued maps.}
\author{Radu Ignat\footnote{Institut de Math\'ematiques de Toulouse, Universit\'e
Paul Sabatier, 31062
Toulouse, France. Email: Radu.Ignat@math.univ-toulouse.fr} \and Xavier Lamy \footnote{Institut de Math\'ematiques de Toulouse, Universit\'e
Paul Sabatier, 31062
Toulouse, France. Email: Xavier.Lamy@math.univ-toulouse.fr}}
\begin{document}
\maketitle

\begin{abstract}
We prove that a $BV$ map with values into the projective space $\RP$ has a $BV$ lifting with values into the unit sphere $\sphere$ that satisfies an optimal $BV$-estimate.
As an application to liquid crystals, this result is also stated for $BV$ maps with values into the set of uniaxial $Q$-tensors.
In order to quantify $BV$ liftings, we prove an explicit formula for an intrinsic $BV$-energy of maps with values into any compact smooth manifold. 
\end{abstract}

{\small \textit{Keywords:} Manifold-valued $BV$ maps, lifting, $Q$-tensors.}

\section{Introduction}\label{s:intro}

For a vector $n\in \sphere$ in the unit sphere in $\R^d$ ($d\geq 2$), we denote by $[n]$ the corresponding element of the projective space $\RP=\sphere/\mathbb Z_2$, i.e.,
\begin{equation*}
[n]=\lbrace \pm n\rbrace.
\end{equation*}
Let $\Omega\subset\R^N$ $(N\geq 2)$ be an open set and $u:\Omega\to \R^d$ be a Lebesgue measurable map such that $u(x)\in \RP$ for a.e. $x\in \Omega$.
We call {\it lifting} of $u$ (or {\it orientation} of $u$), any Lebesgue measurable map $n:\Omega\to \R^d$ such that 
\begin{equation*}
u(x)=[n(x)] \quad \textrm{and} \quad n(x)\in \sphere \quad \textrm{ for a.e. }x\in \Omega.
\end{equation*}
The following question naturally arises (motivated in particular by the theory of nematic liquid crystals, see e.g. 
\cite{ball-bedford,ballzarnescu11,bedford16,mucci12}): 
\medskip
\begin{question*} 
If $u$ has some regularity, is there a lifting $n$ of $u$ with the same regularity?
\end{question*}
For example, if $\Omega$ is simply connected and $u$ is
continuous (respectively, $u\in C^k(\Omega; \RP)$ for some $k\in\mathbb N\cup\lbrace\infty\rbrace$), then it is
well known that $n$ can be chosen to be continuous 
(respectively, $n \in C^k(\Omega; \sphere)$), see for example \cite[p. 61, Prop. 1.33]{hatcher02}. Moreover, in these cases, only two choices of lifting $n$ are possible, i.e., $\{-n, n\}$. 
The answer is more delicate in the framework of Sobolev spaces $W^{1,p}$. If $p\geq 2$, then a map $u\in W^{1,p}(\Omega; \RP)$ has exactly two liftings $n$ and $-n$ belonging to $W^{1,p}(\Omega; \sphere)$ provided that $\Omega$ is simply connected; however, if $1\leq p<2$, there exist maps $u\in W^{1,p}(\Omega; \RP)$ that do not admit any lifting $n\in W^{1,p}(\Omega; \sphere)$ (see \cite{bethuelchiron07} and Section~\ref{s:optim} below).

The aim of this article is to give a positive answer to the Lifting question in the framework of $BV$ maps together with an optimal estimate of a $BV$ lifting.
For an open set $\Omega\subset\R^N$ $(N\geq 2)$ and any compact Riemannian manifold $\mathcal N$ isometrically embedded in $\R^D$ we consider the nonlinear space $BV(\Omega;\mathcal N)$ as the set of maps $u\in L^1_{loc}(\Omega;\R^D)$ such that $u(x)\in \mathcal N$ for a.e. $x\in \Omega$ and the differential $Du$ is a finite Radon measure. We will systematically use the decomposition  of the $\R^{D\times N}$-valued measure $Du$ into its absolutely continuous part $D^a u$, its Cantor part $D^c u$ and its jump part $D^j u$:
\begin{align*}
Du&=D^a u + D^c u + D^j u,\quad
D^a u = \nabla u \,\mathcal L^N, \quad
D^j u=(u^+ -u^-)\otimes \nu \, \mathcal H^{N-1}\lfloor J_u.
\end{align*}
Here $\nabla u$, the density of $D^a u$ with respect to the Lebesgue measure $\mathcal L^N$, is called the approximate gradient of $u$, the set $J_u\subset \Omega$ is the $(N-1)$-rectifiable jump set of $u$ that is oriented by the unit vector field $\nu$, and $u^{\pm}$ are the traces of $u$ on $J_u$ with respect to $\nu$. The three measures $D^a u$, $D^c u$ and $D^j u$ are mutually singular. The part of $Du$ that does not involve jumps, i.e. $\widetilde D u=D^a u +D^c u$, is called the diffuse part.
We say that $u\in SBV$ if $u\in BV$ and the Cantor part vanishes, i.e., $D^c u=0$ in $\Omega$.

\begin{remark}\label{rem:intrinsic}
The above definition of the nonlinear space $BV(\Omega; {\cal N})$ does not depend on the choice of an isometric embedding $\mathcal N\subset\R^D$. However, 
it is important to note that the resulting seminorm $\abs{u}_{BV}=\abs{Du}(\Omega)$ does depend on the  embedding, through the way it measures jumps. 
To be more specific, let us consider two isometric embeddings $\Phi_\ell\colon \mathcal N\to\R^{D_\ell}$ ($\ell=1,2$). The diffuse part of the seminorm does not depend on the embedding: the total variations of $D^a[\Phi_\ell(u)]$ and $D^c[\Phi_\ell(u)]$ satisfy 
\begin{equation}\label{eq:intrinsic}
\begin{aligned}
\abs{D^a[\Phi_1(u)]}&=\abs{D^a[\Phi_2(u)]} \quad 
\text{ and } \quad \abs{D^c[\Phi_1(u)]}=\abs{D^c[\Phi_2(u)]}\quad\text{ as measures in }\Omega
\end{aligned}
\end{equation} 
(see Lemma \ref{lem:diffuse} in the Appendix below).
The jump set $J_{u}$ is also independent of the embedding, \emph{but} the total variation of the jump part  is given by
\begin{equation*}
\abs{D^j[\Phi_\ell(u)]}=\abs{\Phi_\ell(u^+)-\Phi_\ell(u^-)}\mathcal H^{N-1}\lfloor J_u \quad\text{ as measures in }\Omega, \quad \ell=1,2.
\end{equation*}
In other words, the cost of a jump between $u_+$ and $u_-$ is $\abs{\Phi_\ell(u^+)-\Phi_\ell(u^-)}$ (where $|\cdot|$ denotes the Euclidean distance in $\R^{D_\ell})$,
which need not be the same for $\ell=1,2$.
As an example, consider the circle $\mathcal N = \mathbb S^1$ and $u_\pm=(\pm 1, 0)$ two opposite points on the circle. For the standard embedding $\mathbb S^1\subset\R^2$ the cost of a jump between $u^+$ and $u^-$ is $\abs{u^+-u^-}=2$.
However, any smooth injective curve $\gamma\colon\SSS^1\simeq \R/2\pi\mathbb Z\to\R^D$ with $\abs{\gamma'(t)}_{\mathbb R^D}\equiv 1$ provides an isometric embedding of $\mathbb S^1$ into $\mathbb R^D$ and the cost of such jump is $\abs{\gamma(0)-\gamma(\pi)}_{\mathbb R^D}$, which can be any arbitrary number in $(0,\pi)$. 
In this context, one could also wish to measure jumps in the geodesic distance which yields $\dist_{\mathbb S^1}(u^+,u^-)=\pi$ as the cost of this jump.
\end{remark}

The answer to the Lifting question in the framework of $BV$ maps is positive:

\begin{theorem}
\label{thm:thm123}
Let $\Omega\subset\R^N$ ($N\geq 1$) be an open set and $u\in BV(\Omega;\RP)$. Then there exists $n\in BV(\Omega;\sphere)$ such that $u=[n]$ a.e. 
Moreover, in the case of a bounded Lipschitz open set $\Omega$, if $n_0\in L^1(\partial \Omega; \sphere)$ is a prescribed ``lifting" trace at the boundary, i.e., 
$u=[n_0]$ $\h^{N-1}$-a.e. on $\partial \Omega$, then there exists a lifting $n\in BV(\Omega;\sphere)$ of $u$ such that $n=n_0$ $\h^{N-1}$-a.e. on $\partial \Omega$. 
\end{theorem}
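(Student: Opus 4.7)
The plan is to construct the lift in three stages: first handle $N=1$ by a direct covering-space argument, then pass to $N\ge 2$ by slicing combined with an approximation step, and finally incorporate the boundary trace via an extension.

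For $N=1$, given $u\in BV(I;\RP)$ on an open interval, the jump set $J_u$ is at most countable and the continuous representative of $u$ is well-defined on each connected component $I_j$ of $I\setminus J_u$. Since $I_j$ is simply connected, classical covering-space theory applied to the double cover $\pi\colon\sphere\to\RP$ yields a continuous lift $n_j\colon I_j\to\sphere$, unique up to a global sign. Concatenating arbitrary choices of signs defines a measurable $n\colon I\to\sphere$. Because $\pi$ is a local isometry, the diffuse part of $Dn$ coincides intrinsically with that of $Du$, in the sense of Remark~\ref{rem:intrinsic}. At each jump the induced jump of $n$ is either $\abs{n^+-n^-}$ or $\abs{n^++n^-}$; by flipping the sign of $n$ on consecutive continuous pieces one can independently pick the minimum of these two values at each jump, and the resulting jump cost is controlled by the jump cost of $u$ in the natural embedding of $\RP$ into symmetric matrices.

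For $N\ge 2$, I would use slicing. For each direction $e\in\SSS^{N-1}$, the restriction of $u$ to almost every line parallel to $e$ belongs to $BV(\cdot;\RP)$, and Stage~1 provides a slicewise lift. The key difficulty is to make these sign choices measurable and globally coherent so that the resulting map is itself $BV$ on $\Omega$. I would resolve this by first approximating $u$ strongly by smoother $\RP$-valued maps $u_k$ (for instance piecewise smooth maps on a polyhedral subdivision of $\Omega$) for which the slicewise construction is transparent. Uniform bounds $\abs{Dn_k}(\Omega)\le C\abs{Du_k}(\Omega)$ follow by combining slicewise estimates via Fubini, and a weakly-$*$ convergent subsequence in $BV$ then produces a limit $n$ which, by continuity of $\pi$, remains a lift of $u$.

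For the prescribed boundary trace, I would extend $u$ to a slightly larger open set $\widetilde\Omega\supset\overline\Omega$ by reflecting across $\partial\Omega$: using the Lipschitz atlas, set $\widetilde u:=u\circ R$ on a one-sided tubular collar of $\partial\Omega$ outside $\Omega$, so that $\widetilde u\in BV(\widetilde\Omega;\RP)$. Extend $n_0$ analogously to a function $\widetilde n_0$ on the collar whose projective class coincides with $\widetilde u$ there. Applying the interior construction to $\widetilde u$ and fixing the global sign ambiguity so that the constructed lift agrees with $\widetilde n_0$ on the connected component of the collar meeting $\partial\Omega$ outside the jump set, restriction to $\Omega$ combined with the $BV$ trace theorem yields a lift with boundary trace $n_0$. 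The main obstacle throughout is Stage~2, namely the global coherence of slicewise sign choices---exactly the obstruction responsible for the failure of lifting in $W^{1,p}$ with $p<2$---which in the $BV$ framework is overcome because inconsistencies across slices can be absorbed into the jump part of $Dn$ at finite $BV$-cost.
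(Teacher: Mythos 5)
Your Stage~2 contains the genuine gap. Writing $u$ as a limit of piecewise-smooth $u_k$ and then passing to a weak-$*$ limit in $BV$ only works if you can produce lifts $n_k$ with a \emph{uniform} bound $\abs{Dn_k}(\Omega)\le C\abs{Du_k}(\Omega)$, but constructing such $n_k$ is not ``transparent'': on a polyhedral partition the sign on each cell can be chosen, and across a face the jump of $n_k$ costs either $\abs{n^+-n^-}$ or $\abs{n^++n^-}$; the locally minimizing choice need not be globally consistent (this is exactly the half-vortex obstruction of Section~\ref{s:optim}), so some faces necessarily carry the ``wrong'' sign and the extra cost of those faces is \emph{a priori} uncontrolled by $\abs{Du_k}$. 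Showing that this extra cost can always be made $O(\abs{Du_k})$ with a universal constant is essentially the content of Theorems~\ref{thm:lift} and~\ref{thm:lifteucl}; your argument assumes it. The paper circumvents this entirely: it defines a one-parameter family of explicit liftings $L_R(u)=R^{-1}F(Ru)$ indexed by $R\in SO(d)$ and proves the averaged estimate $\int_{SO(d)}\dist(F(Rn),F(Rm))\,d\mu(R)=\frac{2}{\pi}\dist(n,m)\dist(-n,m)$ (Lemma~\ref{lem:averagedist}), which, combined with the Korevaar--Schoen characterization \eqref{eq:mollif} and Fatou, produces a single $R_0$ for which $L_{R_0}(u)\in BV$ with the stated bound. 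There is no approximation step and no weak-$*$ limit.

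The boundary-trace argument also has a gap. Whatever interior lift $\widetilde n$ you construct on the extended domain, its trace on $\partial\Omega$ differs from $n_0$ by a $\{\pm 1\}$-valued function $f=\widetilde n\cdot n_0\in L^1(\partial\Omega;\{\pm 1\})$, which is in general \emph{not} constant, so ``fixing the global sign ambiguity'' is insufficient. The paper's fix is to extend $f$ harmonically to $\tilde f\in W^{1,1}(\Omega;[-1,1])$, use the coarea formula to pick a level $\alpha$ for which $\bar f=2\,\mathbf 1_{\{\tilde f>\alpha\}}-1\in BV(\Omega;\{\pm 1\})$, and then set $n=\bar f\,\widetilde n$: since $\bar f,\widetilde n\in BV\cap L^\infty$, the product is $BV$, $[n]=[\widetilde n]=u$, and the trace of $n$ is $(\bar f)^2 n_0=n_0$. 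Your reflection idea is a reasonable instinct but does not by itself supply the nonconstant sign correction that $\bar f$ provides.
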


The main point of our article is to prove optimal $BV$-estimates of liftings using a method based on fine properties of $BV$ maps. 
We underlined in Remark~\ref{rem:intrinsic} that the total variation of the diffuse part of $Du$ (i.e. the part $\tilde{D}u$ that does not involve jumps) does not depend on the choice of an embedding. This intrinsicality extends to the choice of a $BV$ lifting $n$ of $u\in BV(\Omega; \RP)$, i.e., 
the total variation of the diffuse part of $Dn$ is  independent of the lifting:

\begin{proposition}\label{prop:diffuse}
Let $\Omega\subset\R^N$ ($N\geq 1$) be an open set and $n\in BV(\Omega;\sphere)$. Set $u=[n]$ in $\Omega$. Then $u\in BV(\Omega;\RP)$ 
and the total variations of the diffuse parts of $Dn$ and $Du$ are related by
\begin{equation}
\label{111}
|D^a n|=|D^a u|, \quad |D^c n|=|D^c u| \quad \textrm{ as measures in } \Omega.
\end{equation}
These equalities also hold for the partial derivative measures in any direction $\omega\in \SSS^{N-1}$, i.e., $|D_\omega^a n|=|D_\omega^a u|$ and $|D_\omega^c n|=|D_\omega^c u|$ as measures in $\Omega$.
\end{proposition}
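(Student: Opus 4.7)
The strategy is to reduce the proposition to a chain-rule computation using a well-chosen isometric embedding of $\RP$. Consider the quadratic map $\Phi\colon \R^d\to\R^{d\times d}$ defined by $\Phi(x)=\frac{1}{\sqrt 2}\,x\otimes x$. Since $\Phi(-x)=\Phi(x)$, it descends to a smooth map $\bar\Phi\colon\RP\to\R^{d\times d}$. For $n\in\sphere$ and $v\in T_n\sphere$ (so $v\perp n$), one has $d\Phi_n(v)=\frac{1}{\sqrt 2}(v\otimes n+n\otimes v)$ and a short computation gives
$$|d\Phi_n(v)|^2=\tfrac{1}{2}\bigl(2|v|^2|n|^2+2(v\cdot n)^2\bigr)=|v|^2,$$
so, since the double cover $\sphere\to\RP$ is a local isometry, $\bar\Phi$ is an isometric embedding of $\RP$. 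By Lemma~\ref{lem:diffuse}, the diffuse parts of $Du$ coincide with those of the matrix-valued $BV$ map $\tilde u:=\bar\Phi(u)=\frac{1}{\sqrt 2}\,n\otimes n$, and it suffices to compare the latter with $Dn$.

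Applying the Vol'pert / Ambrosio--Dal Maso chain rule to the smooth $\Phi$ composed with $n\in BV(\Omega;\R^d)$ yields
\begin{align*}
\nabla\tilde u&=\tfrac{1}{\sqrt 2}(\nabla n\otimes n+n\otimes\nabla n)\quad \mathcal L^N\text{-a.e.,}\\
D^c\tilde u&=\tfrac{1}{\sqrt 2}(D^c n\otimes \tilde n+\tilde n\otimes D^c n),
\end{align*}
where $\tilde n$ denotes the precise representative of $n$. The same chain rule applied to the constant function $x\mapsto|n(x)|^2=1$ — whose jump contribution vanishes because $|n^\pm|=1$ — gives the tangentiality identities $n\cdot\nabla n=0$ $\mathcal L^N$-a.e.~and $\tilde n\cdot D^c n=0$ as $\R^N$-valued measures.

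To identify norms, fix a direction $\omega\in\SSS^{N-1}$ and write the polar decomposition of the scalar-valued directional Cantor measure as $\partial^c_\omega n=\sigma_\omega\,|\partial^c_\omega n|$ with $|\sigma_\omega|=1$; by tangentiality, $\sigma_\omega\perp\tilde n$ $|\partial^c_\omega n|$-a.e. Then
$$\partial^c_\omega\tilde u=\tfrac{1}{\sqrt 2}(\sigma_\omega\otimes\tilde n+\tilde n\otimes\sigma_\omega)\,|\partial^c_\omega n|,$$
and expanding the Frobenius norm gives $|\sigma_\omega\otimes\tilde n+\tilde n\otimes\sigma_\omega|^2=2|\sigma_\omega|^2|\tilde n|^2+2(\sigma_\omega\cdot\tilde n)^2=2$, so $|\partial^c_\omega\tilde u|=|\partial^c_\omega n|$ as scalar measures. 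The absolutely continuous part is handled identically, using $\partial_\omega n\perp n$ a.e. Summing over a basis of directions — or applying the same linear-algebra identity to the full tensor $\nabla n\otimes n+n\otimes\nabla n$ — gives $|D^a\tilde u|=|D^a n|$ and $|D^c\tilde u|=|D^c n|$, and Lemma~\ref{lem:diffuse} then delivers \eqref{111} together with its directional counterpart.

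The main subtlety I expect is in the Cantor part: one needs to know that the precise representative $\tilde n$ lies in $\sphere$ at $|D^c n|$-a.e.~point, so that $\sigma_\omega$ is genuinely tangent to $\sphere$ at $\tilde n$ and the orthogonality $\sigma_\omega\cdot\tilde n=0$ can be read off the tangentiality identity above. Once this is correctly set up, the argument reduces to the clean linear-algebra identity $|u\otimes v+v\otimes u|=\sqrt 2$ for orthonormal $u,v\in\R^d$, which is exactly the distortion factor compensating the $1/\sqrt 2$ in the embedding $\Phi$.
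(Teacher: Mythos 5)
Your proof is correct, and its engine — the Ambrosio--Dal Maso chain rule, the tangentiality identities coming from $\abs{n}^2=1$, and the fact that the differential of the covering map composed with an isometric embedding preserves the length of tangent vectors — is exactly the mechanism behind the paper's argument; the difference is in the packaging. The paper's proof is essentially two lines: for \emph{any} isometric embedding $\Phi\colon\RP\to\R^D$, the induced symmetric map $\overline\Phi\colon\sphere\to\Phi(\RP)$, $\overline\Phi(n)=\Phi([n])$, is a smooth \emph{local isometry} between embedded compact manifolds, so Lemma~\ref{lem:diffuse} applies verbatim with $\Psi=\overline\Phi$, $u_1=n$, $u_2=u$, giving \eqref{111} and the directional statements at once, with no need to single out a particular embedding. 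You instead use Lemma~\ref{lem:diffuse} only for embedding-independence on the $\RP$ side and then re-derive its content by hand for the tensorial embedding $x\mapsto\frac1{\sqrt2}\,x\otimes x$ — in effect re-proving the relevant special case of the lemma; this is more concrete and self-contained, at the cost of length. Two points to tighten: (i) $\Phi$ is smooth but not globally Lipschitz, so before invoking the chain rule you should truncate it outside a neighborhood of $\sphere$ (the paper's lemma does the analogous step by extending $\Psi$ to a $1$-Lipschitz map); (ii) for the Cantor part, ``summing over a basis of directions'' does not by itself yield $\abs{D^c\tilde u}=\abs{D^c n}$, since total variations are not additive across directions — one must apply your linear-algebra identity to the full polar density $G$ of $D^c n=G\,\abs{D^c n}$, whose columns are orthogonal to $\tilde n$ by your tangentiality identity; your parenthetical ``full tensor'' alternative is the right fix. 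Your flagged subtlety is handled correctly: $\abs{D^c n}$ does not charge the approximate discontinuity set up to an $\mathcal H^{N-1}$-negligible set, and at approximate continuity points the precise representative is the approximate limit, which lies in the closed set $\sphere$, so $\tilde n\in\sphere$ and $\sigma_\omega\perp\tilde n$ hold $\abs{D^c n}$-a.e.
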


This has interesting consequences regarding function spaces that are useful in the modeling of liquid crystals \cite{ball-bedford,bedford16}.

\begin{corollary}\label{rem:sbv}
Let $\Omega\subset\R^N$ ($N\geq 1$) be an open set. If $u\in SBV(\Omega;\RP)$, then any $BV$ lifting $n$ of $u$ belongs to $SBV(\Omega;\sphere)$. 
If in addition, $u\in W^{1,p}(\Omega;\RP)$ for some $p\geq 1$, then any $BV$ lifting $n$ of $u$ belongs to $SBV^p(\Omega;\sphere)$ and the approximate gradient of $n$ satisfies $|\nabla n|=|\nabla u|\in L^p(\Omega)$, while the traces of $n$ satisfy $n^+=-n^-$ ${\cal H}^{N-1}$-a.e. on $J_n$.
\end{corollary}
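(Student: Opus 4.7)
The plan is to deduce the corollary directly from Proposition~\ref{prop:diffuse}, which already gives the equality of the diffuse parts of $Dn$ and $Du$. Given any $BV$ lifting $n$ of $u$, Proposition~\ref{prop:diffuse} yields $|D^c n|=|D^c u|$ as measures on $\Omega$. Thus the assumption $u\in SBV(\Omega;\RP)$, meaning $D^c u=0$, immediately forces $D^c n=0$, so $n\in SBV(\Omega;\sphere)$.

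For the second statement, assume in addition $u\in W^{1,p}(\Omega;\RP)$, so that $Du=\nabla u\,\mathcal L^N$ with $\nabla u\in L^p$, and in particular $D^c u=0$ and $D^j u=0$. The first part already gives $n\in SBV$, and Proposition~\ref{prop:diffuse} applied to the absolutely continuous parts yields
\begin{equation*}
|\nabla n|\,\mathcal L^N=|D^a n|=|D^a u|=|\nabla u|\,\mathcal L^N\quad\text{in }\Omega,
\end{equation*}
so $|\nabla n|=|\nabla u|$ $\mathcal L^N$-a.e., which gives $\nabla n\in L^p$ and thus $n\in SBV^p(\Omega;\sphere)$.

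It remains to identify the jumps of $n$. Since $u\in W^{1,p}$, the jump set $J_u$ is $\mathcal H^{N-1}$-negligible. On the other hand, at every point $x\in J_n$ the one-sided traces $n^\pm(x)\in\sphere$ satisfy $n^+(x)\neq n^-(x)$ by definition of the jump set, and they project onto the corresponding traces of $u$, namely $[n^\pm(x)]=u^\pm(x)$. For $\mathcal H^{N-1}$-a.e.\ $x\in J_n$ we have $x\notin J_u$, hence $u^+(x)=u^-(x)$, i.e.\ $[n^+(x)]=[n^-(x)]$, which forces $n^+(x)=\pm n^-(x)$; combining with $n^+(x)\neq n^-(x)$ gives $n^+(x)=-n^-(x)$ $\mathcal H^{N-1}$-a.e.\ on $J_n$.

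I do not expect any serious obstacle here: all the real work is hidden in Proposition~\ref{prop:diffuse}, and the rest is an unwinding of the definitions of $SBV$, $SBV^p$, and of the projection $\sphere\to\RP$ combined with the standard fact that $W^{1,p}$ maps have $\mathcal H^{N-1}$-negligible jump set. The only subtlety worth being careful about is making sure one reasons pointwise at $\mathcal H^{N-1}$-a.e.\ $x\in J_n$, so that the projection identity $[n^\pm(x)]=u^\pm(x)$ is applied at a point where the relevant traces of both $n$ and $u$ are well-defined.
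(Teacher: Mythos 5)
Your proposal is correct and follows exactly the route the paper intends: the corollary is stated as a direct consequence of Proposition~\ref{prop:diffuse} (equality of the diffuse parts), combined with the standard fact that Sobolev maps have $\mathcal H^{N-1}$-negligible approximate discontinuity set, which is precisely your argument for the antipodal traces. The only point left implicit is the finiteness $\mathcal H^{N-1}(J_n)<\infty$ required for $SBV^p$, but this is immediate since $2\,\mathcal H^{N-1}(J_n)=\int_{J_n}|n^+-n^-|\,d\mathcal H^{N-1}=|D^j n|(\Omega)<\infty$ once $n^+=-n^-$ a.e.\ on $J_n$.
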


We highlight the fact that $\Omega$ is not necessarily simply connected in our results (in particular, in Corollary \ref{rem:sbv}); therefore, our result covers also the case of maps $u\in W^{1,p}(\Omega;\RP)$ that do not need to have a lifting $n\in W^{1,p}(\Omega;\sphere)$ even if $p\geq 2$. This provides a generalization of Proposition 4 in \cite{bedford16}.

We are actually interested in a more precise version of the above Theorem \ref{thm:thm123}, with optimal $BV$-estimates of liftings. 
As $BV(\Omega; \cal N)$ is a nonlinear space, it does not make sense to consider a seminorm. We will rather call $BV$-energy a quantity that is the nonlinear equivalent of a $BV$ seminorm. More precisely, we consider the following two cases:
\begin{itemize}
\item On the one hand, a natural choice is to use an intrinsic $BV$-energy: measuring jumps in terms of the geodesic distance on both $\sphere$ and $\RP$ induced by the Riemannian structure. Such $BV$-energy is independent of the choice of an embedding.
\item On the other hand, the physical motivation of our problem provides us  with at least one other natural $BV$-energy coming from the seminorm induced by the choice of an embedding: in liquid crystals, the projective plane arises naturally as embedded into the linear space of so-called $Q$-tensors (which are symmetric traceless $d\times d$ matrices). That is why we will also pay special attention to the isometric embedding of $\RP$ into $d\times d$ matrices given by
\footnote{ If $\Phi$ is the embedding \eqref{eq:tensorembedding}, then $|D\Phi([n])v|_{\R^{d\times d}}=\frac{1}{\sqrt{2}}|n\otimes v+v\otimes n|_{\R^{d\times d}}=|v|_{\R^d}$ for every $n\in \sphere$ and $v\in T_n\sphere\cong T_{[n]}\RP$, which proves that $\Phi$ is indeed an isometry.}
\begin{equation}\label{eq:tensorembedding}
\Phi:[n]\in \RP \mapsto\frac{1}{\sqrt 2}n\otimes n \in \R^{d\times d}.
\end{equation}
Here we naturally use for  the target manifold $\sphere$ of liftings the standard embedding $\sphere=\lbrace \abs{x}=1\rbrace\subset\R^d$ where $\abs{\cdot}$ is the Euclidean norm in $\R^d$.
\end{itemize}

Next we present our results in the two aforementioned cases: first, when the $BV$-energy measures jumps in geodesic distance; second, when the jumps are measured in Euclidean distance.

\subsection{Measuring jumps in geodesic distance}

In the case ${\mathcal N}=\sphere$, we denote by $\dist_\sphere(n,m)$ (or simply, $\dist(n,m)$ when $\mathcal N$ is implied by the context to be $\sphere$) the geodesic distance between $n,m\in\sphere$ with respect to the canonical Riemannian metric, which is the one induced by the usual isometric embedding $\sphere\subset\R^d$. The induced distance on ${\mathcal N}=\RP$ is then given by: 
\begin{align*}
\dist_{\RP}([n],[m])&=\dist_\sphere(n,m)\wedge\dist_\sphere(-n,m)\\
&=\dist_\sphere(n,m)\wedge\big(\pi-\dist_\sphere(n,m)\big), \quad \textrm{ for any $n, m\in \sphere$},
\end{align*}
where $a\wedge b$ denotes the minimium of two real numbers $a,b$.
Within these notations, we introduce the following $BV$-energy for $u\in BV(\Omega;\mathcal N)$ defined on an open set $\Omega\subset \R^N$ ($\Omega$ is always endowed with the Euclidean norm $|\cdot|=|\cdot|_{\R^N}$): \footnote{It is known that the liminf in \eqref{eq:mollif} is equal to the corresponding limsup as proved by Korevaar and Schoen \cite{korevaarschoen93} (see also Theorem \ref{prop:linkseminorm}) in the case of a bounded Lipschitz domain $\Omega$. }
\begin{equation}
\label{eq:mollif}
\abs{u}_{BV,\mathcal N}={\liminf}_{\varepsilon\to 0} \iint_{\Omega\times \Omega} \frac{\dist_\mathcal N(u(x),u(y))}{\abs{x-y}}\rho_\varepsilon(\abs{x-y})\, dxdy<\infty,
\end{equation}
where $\{\rho_{\eps}\}_{\eps>0}$ is a family of radial nonnegative mollifiers satisfying,
\begin{equation}
\label{def:molif}
\rho_\eps\geq 0, \quad \int_{\R^N} \rho_{\eps}(|x|) \, dx=1, \quad \lim_{\eps\to 0}  \int_{|x|>h}\rho_\eps(|x|)\, dx=0, \, \forall h>0.
\end{equation}
Intrinsic $BV$-energies of type \eqref{eq:mollif} have been introduced by Korevaar and Schoen \cite{korevaarschoen93}.  If we consider an isometric embedding $\mathcal N\subset\R^D$ and the open set $\Omega$ is bounded and Lipschitz, the $BV$-energy  \eqref{eq:mollif} of $u$ can be expressed in the following way: 
{$\abs{u}_{BV,\mathcal N}$ represents the average over all directions $\omega\in \SSS^{N-1}$ of the total variation of the partial derivative measure $D_\omega u$ of $u$ in direction $\omega\in \SSS^{N-1}$ where the jump cost is given by the geodesic distance in $\cal N$. This is valid for every compact manifold $\cal N$. (This averaging formula relies strongly on the radial symmetry of mollifiers in \eqref{def:molif}).}

\bigskip

\begin{theorem}\label{prop:linkseminorm}
Let $\Omega\subset \R^N$ be a bounded Lipschitz open set, $\cal N$ be a compact smooth Riemannian manifold isometrically embedded in $\R^D$  and $u\in BV(\Omega;\mathcal N)$. For any family of radial nonnegative mollifiers $\{\rho_{\eps}\}_{\eps>0}$ satisfying \eqref{def:molif}, the $\liminf_{\eps \to 0}$ in \eqref{eq:mollif} is equal to the corresponding $\limsup_{\eps\to 0}$ and this limit is given by
\begin{equation}\label{eq:linkseminorm}
\abs{u}_{BV,\mathcal N} = \int_\Omega \inti_{\mathbb S^{N-1}}\abs{\nabla_\omega u}\, d\mathcal H^{N-1}(\omega)\, dx +K_N\abs{D^c u}(\Omega) + K_N\int_{J_u} \dist_{\mathcal N}(u^-,u^+)d\mathcal H^{N-1},
\end{equation}
where $\nabla_\omega u = (\nabla u)\omega$ stands for the approximate derivative of $u$ in direction $\omega\in \SSS^{N-1}$, 
$$K_N=\inti_{\mathbb S^{N-1}}\abs{\omega\cdot e}d\mathcal H^{N-1}(\omega)$$ for any $e\in\mathbb S^{N-1}$ and the average is denoted by $\displaystyle \inti_{\mathbb S^{N-1}}:=\frac{1}{\h^{N-1}(\SSS^{N-1})}\int_{\mathbb S^{N-1}}$. 
\end{theorem}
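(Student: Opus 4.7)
The plan is to reduce the $N$-dimensional double integral in \eqref{eq:mollif} to a one-parameter family of one-dimensional problems along each direction $\omega\in\SSS^{N-1}$ via polar coordinates, apply a one-dimensional Korevaar--Schoen-type formula on each slice, integrate back over directions, and match the result with \eqref{eq:linkseminorm} using the standard $BV$ slicing theory together with Alberti's rank-one theorem for the Cantor part. Concretely, the change of variables $y=x+r\omega$ with $r=|y-x|$ and $\omega=(y-x)/|y-x|$, for which $dy=r^{N-1}\,dr\,d\h^{N-1}(\omega)$, rewrites the double integral as
\begin{equation*}
I_\eps := \iint_{\Omega\times\Omega}\frac{\dist_{\mathcal{N}}(u(x),u(y))}{|x-y|}\rho_\eps(|x-y|)\,dxdy = \int_{\SSS^{N-1}}\int_0^\infty r^{N-1}\rho_\eps(r)\,\Phi_r(\omega)\,dr\,d\h^{N-1}(\omega),
\end{equation*}
where $\Phi_r(\omega)=\frac{1}{r}\int_{\Omega\cap(\Omega-r\omega)}\dist_{\mathcal{N}}(u(x),u(x+r\omega))\,dx$. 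Condition \eqref{def:molif} written in polar form gives $\int_0^\infty r^{N-1}\rho_\eps(r)\,dr=1/\h^{N-1}(\SSS^{N-1})$, so $\h^{N-1}(\SSS^{N-1})r^{N-1}\rho_\eps(r)\,dr$ is a probability measure on $[0,\infty)$ concentrating at $r=0$ as $\eps\to 0$.

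The core ingredient is a one-dimensional lemma: for every compact smooth Riemannian manifold $\mathcal{N}\subset\R^D$ and every $v\in BV(I;\mathcal{N})$ on an open interval $I\subset\R$,
\begin{equation*}
\lim_{r\to 0^+}\frac{1}{r}\int_{I\cap(I-r)}\dist_{\mathcal{N}}(v(t),v(t+r))\,dt = |D^a v|(I)+|D^c v|(I)+\sum_{t\in J_v}\dist_{\mathcal{N}}(v^-(t),v^+(t)).
\end{equation*}
Using $\dist_{\mathcal{N}}(p,q)=|p-q|_{\R^D}(1+O(|p-q|))$ near the diagonal (via the Riemannian exponential map of $\mathcal{N}\subset\R^D$), the diffuse part reduces to the classical Euclidean $BV$ formula $\frac{1}{r}\int|v(t+r)-v(t)|_{\R^D}\,dt\to|Dv|(I)$, while the embedding-independence of the diffuse part stated in \eqref{eq:intrinsic} lets me switch freely between intrinsic and embedded viewpoints. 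For each jump $t_0\in J_v$, the integrand is close to $\dist_{\mathcal{N}}(v^-(t_0),v^+(t_0))$ precisely on $t\in(t_0-r,t_0)$, producing a contribution $r\dist_{\mathcal{N}}(v^-,v^+)$; summability $\sum_{J_v}\dist_{\mathcal{N}}(v^-,v^+)<\infty$ (inherited from the embedded total variation) then allows me to sum over all jumps.

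Applying this lemma to the slice $v_{y,\omega}(t)=u(y+t\omega)$ (which lies in $BV$ for $\h^{N-1}$-a.e.\ $y\in\omega^\perp$ by standard slicing of $BV$ maps) and integrating over $y\in\omega^\perp$ via the classical slicing formulas for $D^a,D^c,D^j$ yields $\lim_{r\to 0^+}\Phi_r(\omega)=G(\omega)$ with
\begin{equation*}
G(\omega)=\int_\Omega|\nabla_\omega u|\,dx + |D^c_\omega u|(\Omega) + \int_{J_u}\dist_{\mathcal{N}}(u^-,u^+)\,|\omega\cdot\nu|\,d\h^{N-1},
\end{equation*}
the factor $|\omega\cdot\nu|$ coming from the coarea density of the intersection of lines $y+\R\omega$ with the rectifiable jump set $J_u$. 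Since $\Phi_r(\omega)\leq C|Du|(\widetilde\Omega)$ uniformly in $r$ and $\omega$ by a standard $BV$ finite-difference estimate applied after extending $u$ to a slightly larger set $\widetilde\Omega\supset\Omega$ (allowed by the Lipschitz hypothesis on $\partial\Omega$), dominated convergence combined with the concentration of $\h^{N-1}(\SSS^{N-1})r^{N-1}\rho_\eps(r)dr$ at $r=0$ yields $\lim_{\eps\to 0}I_\eps=\inti_{\SSS^{N-1}}G(\omega)\,d\h^{N-1}(\omega)$, so in particular $\liminf_\eps=\limsup_\eps$.

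Finally, the first and third terms of $G$ assemble into the corresponding pieces of \eqref{eq:linkseminorm} via Fubini and via $\inti_{\SSS^{N-1}}|\omega\cdot\nu|\,d\h^{N-1}(\omega)=K_N$. For the Cantor term I invoke Alberti's rank-one theorem to write $D^c u=a\otimes b\,|D^c u|$ with unit vectors $a(x),b(x)$, so that $|D^c_\omega u|=|b\cdot\omega|\,|D^c u|$ and $\inti_{\SSS^{N-1}}|b(x)\cdot\omega|\,d\h^{N-1}(\omega)=K_N$, producing the $K_N|D^c u|(\Omega)$ contribution and completing the proof. The main obstacle will be the one-dimensional lemma, particularly the careful treatment at the jump set where the cost must switch from Euclidean to geodesic; the remainder is slicing bookkeeping combined with Alberti.
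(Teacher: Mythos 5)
Your overall architecture (polar coordinates, slicing along lines, Alberti for the Cantor part, domination plus concentration of $r^{N-1}\rho_\eps(r)\,dr$ at $r=0$) is a plausible alternative to the paper's proof, and the $K_N$ bookkeeping at the end is exactly right. But the proposal has a genuine gap: the one-dimensional lemma, which you yourself identify as "the main obstacle", is the entire analytic content of the theorem, and the two-sentence sketch you give for it does not work as stated. The upper bound direction is fine in spirit, but the lower bound cannot be obtained by combining the classical Euclidean difference-quotient formula with the expansion $\dist_{\mathcal N}(p,q)=|p-q|(1+O(|p-q|))$: the Euclidean formula only charges $|v^+-v^-|_{\R^D}$ at a jump, which is strictly smaller than $\dist_{\mathcal N}(v^-,v^+)$ in general (this discrepancy is the whole point of Remark~\ref{rem:intrinsic}), and the near-diagonal expansion gives no information across a jump. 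Moreover the claim that near a jump $t_0$ the integrand equals $\dist_{\mathcal N}(v^-(t_0),v^+(t_0))$ "precisely on $(t_0-r,t_0)$" is false when other jumps or diffuse variation occur within distance $r$ (two nearby opposite jumps cancel in the difference quotient), and a $BV$ function can have countably many jumps accumulating, so the rigorous argument requires selecting finitely many jumps, localizing on disjoint windows, controlling the diffuse and small-jump variation inside windows of width $r$, and only then passing to the supremum. This is precisely the work the paper's Appendix B performs by a different device: the family of real-valued functions $f_\xi=\dist_{\mathcal N}(u,\xi)$, $\xi\in\mathcal N$, whose directional variations $|D_\omega f_\xi|$ have $\mu_\omega$ as least upper bound (Lemma~\ref{lem:muomega}); choosing $\xi=u^-(x)$ at jump points is what upgrades the jump cost from chordal to geodesic, and the same family yields the pointwise inequality $\dist_{\mathcal N}(u(x),u(x+r\omega))\le\mu_\omega([x,x+r\omega])$ used for the upper bound (Lemma~\ref{lem:upperboundm}). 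Until you supply a proof of your 1D lemma at this level of care (or import the $f_\xi$ trick into it), the proposal does not establish \eqref{eq:linkseminorm}.

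A secondary point you gloss over: for a nonconvex Lipschitz $\Omega$ the slices $\{t:\,y+t\omega\in\Omega\}$ are unions of intervals, so your $\Phi_r(\omega)$ contains pairs $x,x+r\omega\in\Omega$ whose segment leaves $\Omega$ (equivalently, cross-component pairs on the slice), and these are not controlled by the variation of $u$ in $\Omega$; dividing by $r$, their contribution is a priori of order one. You invoke an extension to $\widetilde\Omega$ only for the uniform bound, but it is also needed to identify the limit: the paper reflects $u$ across $\partial\Omega$ so that $|Du|(\partial\Omega)=0$ and then lets the neighborhood $\Omega_h$ shrink, which is how these boundary pairs are shown to contribute nothing. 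With that fix, and with the observation that for $\h^{N-1}$-a.e.\ line the slice has finitely many components (finite perimeter of $\Omega$), your cross-component terms do vanish, but this should be said. Note finally that the paper never proves a pointwise-in-$r$ limit for $\Phi_r(\omega)$: it proves matching upper and lower bounds for $\liminf/\limsup$ of $m^\eps(\Omega)$ directly, the lower bound by testing $D_\omega f_{\xi_i}$ against smooth $\varphi_i$ supported well inside $\Omega$, which sidesteps both of the issues above.
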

This implies in particular that \eqref{eq:mollif} is independent of the mollifying family $\lbrace \rho_\eps\rbrace$ with \eqref{def:molif}. Note that our $BV$-energy \eqref{eq:linkseminorm} is different from the one considered by Giaquinta and Mucci \cite{giaquintamucci06} (see also \cite[Section~6.2.2]{GMS} when $\mathcal N=\mathbb S^1$).

Our main result concerning the geodesic case is the following:

\begin{theorem}\label{thm:lift}
Let $\Omega\subset\R^N$ ($N\geq 1$) be an open set.
For any $u\in BV(\Omega;\RP)$, 
there exists a lifting $n\in BV(\Omega;\sphere)$,
 i.e. $u=[n]$ a.e. in $\Omega$, with
\begin{equation}\label{estim:lift}
\abs{n}_{BV,\sphere}\leq 2\abs{u}_{BV,\RP}.
\end{equation}
Moreover the constant $2$ is optimal if $N\geq 2$.
\end{theorem}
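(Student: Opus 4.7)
My approach combines Proposition~\ref{prop:diffuse} with the intrinsic formula~\eqref{eq:linkseminorm} to reduce~\eqref{estim:lift} to a bound on the jump contribution; then constructs a lifting by slicing in a fixed direction; and finally controls the transverse jump set via a monodromy argument. Optimality is obtained through an explicit half-vortex computation.

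\textbf{Reduction to the jump part.} By Proposition~\ref{prop:diffuse}, any $BV$ lifting $n$ of $u$ satisfies $\abs{D^a n}=\abs{D^a u}$ and $\abs{D^c n}=\abs{D^c u}$ as measures, together with the analogous equalities for every directional derivative. Substituting into~\eqref{eq:linkseminorm}, the diffuse contributions to $\abs{n}_{BV,\sphere}$ and $\abs{u}_{BV,\RP}$ coincide, yielding
\begin{equation*}
\abs{n}_{BV,\sphere}-\abs{u}_{BV,\RP}=K_N\left[\int_{J_n}\dist_\sphere(n^+,n^-)\,d\mathcal{H}^{N-1}-\int_{J_u}\dist_\RP(u^+,u^-)\,d\mathcal{H}^{N-1}\right].
\end{equation*}
Hence~\eqref{estim:lift} is equivalent to producing a lifting $n$ satisfying
\begin{equation*}
K_N\int_{J_n}\dist_\sphere(n^+,n^-)\,d\mathcal{H}^{N-1}\leq\abs{u}_{BV,\RP}+K_N\int_{J_u}\dist_\RP(u^+,u^-)\,d\mathcal{H}^{N-1}.
\end{equation*}

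\textbf{Slicing construction.} In dimension one, at each jump of $u$ one chooses the orientation of $n$ that realizes $\dist_\sphere(n^+,n^-)=\dist_\RP(u^+,u^-)$; this produces a $BV$ lifting with identical one-dimensional intrinsic energy, giving the theorem with constant $1$ in the case $N=1$. For $N\geq 2$, I fix a direction $e\in\SSS^{N-1}$, apply the one-dimensional construction on $\mathcal{H}^{N-1}$-almost every line in direction $e$, and glue the slices by a measurable selection of the initial sign depending on the transverse variable. The resulting map $n$ is a measurable lifting of $u$ that equals the optimal $1$D lifting on each slice; in transverse directions, however, it may acquire additional ``sign-change'' jumps across a set $S\subset\Omega\setminus J_u$ where $n^+=-n^-$ (while $u^+=u^-$).

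\textbf{Main obstacle and optimality.} The decisive step is to show that $S$ is $(N-1)$-rectifiable and that $\pi K_N\,\mathcal{H}^{N-1}(S)$ is controlled by $\abs{u}_{BV,\RP}$. The heuristic is that $S$ records the topological monodromy of $u$: it is bounded, in codimension one, by the codimension-two ``vortex'' singular set of $u$, and its contribution to $\abs{n}_{BV,\sphere}$ matches the diffuse energy of $u$ around those singularities, producing precisely the extra factor of $1$ --- and hence the overall factor $2$. For the sharpness of the constant $2$ when $N\geq 2$, I would use the half-vortex $u(x_1,x_2,x')=[(\cos(\theta/2),\sin(\theta/2),0,\ldots,0)]$ with $\theta=\arg(x_1+ix_2)$ on a cylinder $B_R\times\Omega'\subset\R^2\times\R^{N-2}$: a direct computation from~\eqref{eq:linkseminorm} gives $\abs{u}_{BV,\RP}=K_N\pi R\,\mathcal{L}^{N-2}(\Omega')$, while every $BV$ lifting $n$ necessarily carries a jump of cost $\pi$ on a cut of $\mathcal{H}^{N-1}$-measure at least $R\,\mathcal{L}^{N-2}(\Omega')$, so that $\abs{n}_{BV,\sphere}\geq 2\abs{u}_{BV,\RP}$, confirming that the constant $2$ cannot be improved.
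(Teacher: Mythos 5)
Your reduction step is sound: combining Proposition~\ref{prop:diffuse} with Theorem~\ref{prop:linkseminorm} to note that the diffuse contributions of $n$ and $u$ agree, so that the content of~\eqref{estim:lift} is entirely in bounding the jump integral of $n$. (A minor caveat: formula~\eqref{eq:linkseminorm} is proved for bounded Lipschitz $\Omega$, so for arbitrary open $\Omega$ you would need to exhaust by such subdomains.) The optimality argument via the half-vortex is also the same as the paper's.

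The genuine gap is in the \emph{slicing construction and the control of $S$}. You fix a direction $e$, lift $u$ optimally on each line parallel to $e$, glue by a measurable choice of initial sign, and claim the resulting $n$ acquires additional transverse jumps on a set $S$ with $\pi K_N\,\mathcal H^{N-1}(S)\leq\abs{u}_{BV,\RP}$. This is not proved, and the appeal to ``topological monodromy'' and a ``codimension-two vortex singular set of $u$'' cannot be made rigorous for general $u\in BV(\Omega;\RP)$: BV maps need not have any identifiable singular set, and their restrictions to $\mathcal H^{N-1}$-almost every line are merely BV functions, with no transverse continuity that would force $S$ to be rectifiable or to have finite $\mathcal H^{N-1}$-measure a priori. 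In fact it is not even clear your glued $n$ is measurable-to-$BV$: a single fixed slicing direction with per-line optimal lifting can produce a map whose transverse variation is infinite (this is precisely the phenomenon that makes the lifting question nontrivial). The missing idea that the paper supplies is to avoid any choice of slicing direction altogether and instead average over the rotation group $G=SO(d)$ acting on the \emph{target}: define $L_R=R^{-1}\circ F\circ R$ for the hemisphere cut $F$, and compute (Lemma~\ref{lem:averagedist}) that
\begin{equation*}
\int_G\dist_\sphere(F(Rn),F(Rm))\,d\mu(R)=\frac 2\pi\,\dist_\sphere(n,m)\,\dist_\sphere(-n,m)\leq 2\,\dist_\RP([n],[m]),
\end{equation*}
so that Fubini and Fatou in the mollified functional~\eqref{eq:mollif} yield the factor $2$ directly, and an averaging argument then produces a good rotation $R_0$ with $n=L_{R_0}(u)\in BV(\Omega;\sphere)$ satisfying~\eqref{estim:lift}. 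This bypasses any need to identify or bound the extra jump set $S$; it also shows why the constant is $2$ rather than $1$: the average jump cost of $F(R\cdot)$ between $n$ and $m$ is $\frac 2\pi\theta(\pi-\theta)$ with $\theta=\dist_\sphere(n,m)$, which is at most twice $\min(\theta,\pi-\theta)=\dist_\RP$. Your argument, as written, does not reach this bound, so the proof is incomplete at its central step.
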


Our results hold also in dimension $N=1$, but they do not provide the optimal constant. That is why, in Section \ref{sec:1d}, we will present a different method in estimating $BV$ liftings in the case of dimension $N=1$ for an interval $\Omega\subset \R$; this method will lead to the optimal constant equal to $1$ of the 
$BV$-energy of a lifting in \eqref{estim:lift}. In fact, no additional jumps appear for optimal liftings $n$ of $u$ on intervals $\Omega\subset \R$, that is why the optimal constant is less than in dimension $N>1$.

\subsection{Measuring jumps in Euclidean distance}

We endow $\sphere\subset\R^d$ with the global distance corresponding to Euclidean distance in $\mathbb R^d$, and interpret $n\in BV(\Omega;\sphere)$ as a map $n\in BV(\Omega;\R^d)$. We denote by $\abs{n}_{BV,\R^d}$ the corresponding seminorm, i.e. the total variation norm of $Dn$ as a $\R^{d\times N}$-valued measure: 
\begin{align*}
\abs{n}_{BV,\R^d}&=\sup \bigg\{ \int_\Omega n\cdot {\rm div} \varphi\, dx\, :\, \varphi\in C^1_c(\Omega, \R^{d\times N}), \|\varphi\|_{L^\infty}\leq 1  \bigg\}\\
&=\int_\Omega |\nabla n|\, dx + |D^c n|(\Omega)+\int_{J_n}|n^+-n^-|_{\R^d}\, d\mathcal H^{N-1},
\end{align*}
where the Euclidean distance is used to measure the jumps of $n$.

We identify $\RP$ to a subset of $\R^{d\times d}$ through the physical embedding \eqref{eq:tensorembedding}, i.e., $\Phi([n])=\frac1{\sqrt{2}} n\otimes n$ for all $n\in \sphere$, so that $\RP$ is endowed with the global distance corresponding to Euclidean distance in $\R^{d\times d}$.
Then we interpret
 $u\in BV(\Omega;\RP)$ as a map $u\in BV(\Omega;\R^{d\times d})$ through the physical embedding \eqref{eq:tensorembedding}, and denote by $\abs{u}_{BV,\R^{d\times d}}$  the corresponding seminorm, i.e. the total variation norm of $Du$ as a $\R^{d\times d\times N}$-valued measure 
 \begin{equation*}
\abs{u}_{BV,\R^{d\times d}}=\int_\Omega |\nabla u|\, dx + |D^c u|(\Omega)+\int_{J_u}|u^+-u^-|_{\R^{d\times d}}\, d\mathcal H^{N-1},
\end{equation*}
 where the cost of a jump between $u^+=[n^+]$ and $u^-=[n^-]$ is given by
 \begin{equation}
 \label{distant}
 \abs{u^+-u^-}_{\R^{d\times d}}:=\frac{1}{\sqrt 2}\abs{n^+\otimes n^+ -n^-\otimes n^-}_{\R^{d\times d}}.
 \end{equation}
 
Our main result concerning the Euclidean case is the following:
\begin{theorem}\label{thm:lifteucl}
Let $\Omega\subset\R^N$ ($N\geq 1$) be an open set.
For any $u\in BV(\Omega;\RP)$, there exists a lifting $n\in BV(\Omega;\sphere)$, i.e. $\Phi(u)\stackrel{\eqref{eq:tensorembedding}}{=}\frac1{\sqrt{2}}n\otimes n$ a.e. in $\Omega$ with
\begin{equation}\label{estim:lifteucl}
\abs{n}_{BV,\R^d}\leq \left( 1+\frac 2\pi\right)\Big( \abs{D^c u}(\Omega) + \abs{D^j u}(\Omega)\Big) + C^a(N,d)\int_\Omega\abs{\nabla u}\, dx,
\end{equation}
where $C^a(N,d)\geq 1+2/\pi$, with equality if $d=2$ or $N=1$.
In particular, for $d=2$ and $N\geq 1$ it holds
\begin{equation*}
\abs{n}_{BV, \R^2}\leq \left(1+\frac 2\pi\right)\abs{u}_{BV, \R^{2\times 2}},
\end{equation*}
and the constant $1+2/\pi$ is optimal if $N\geq 2$.
\end{theorem}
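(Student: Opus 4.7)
The plan is to obtain the lifting from Theorem \ref{thm:lift}, giving $n\in BV(\Omega;\sphere)$ with $|n|_{BV,\sphere}\leq 2|u|_{BV,\RP}$, and then convert this geodesic estimate into a Euclidean one. By Proposition \ref{prop:diffuse}, the diffuse parts satisfy $|\nabla n|=|\nabla u|$ a.e.\ and $|D^c n|=|D^c u|$ as measures, so
\begin{equation*}
|n|_{BV,\R^d}=\int_\Omega|\nabla u|\,dx+|D^c u|(\Omega)+\int_{J_n}|n^+-n^-|_{\R^d}\,d\h^{N-1},
\end{equation*}
and the only term left to control is the jump contribution. Since $J_u\subset J_n$, I would split $J_n=J_u\cup(J_n\setminus J_u)$; on $J_u$, choose $n^\pm$ so that $\theta=\dist_{\sphere}(n^+,n^-)\in[0,\pi/2]$, giving $|n^+-n^-|_{\R^d}=2\sin(\theta/2)$ and $|u^+-u^-|_{\R^{d\times d}}=\sin\theta$; on $J_n\setminus J_u$, $n^+=-n^-$ yields $|n^+-n^-|_{\R^d}=2$.

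To estimate $\h^{N-1}(J_n\setminus J_u)$, I would expand both sides of $|n|_{BV,\sphere}\leq 2|u|_{BV,\RP}$ via the averaging formula of Theorem \ref{prop:linkseminorm} and cancel the equal diffuse contributions (Proposition \ref{prop:diffuse}); using $\dist_{\sphere}(n^+,n^-)=\dist_{\RP}(u^+,u^-)=\theta$ on $J_u$ and $\dist_{\sphere}(n^+,n^-)=\pi$ on $J_n\setminus J_u$, and cancelling one copy of $\int_{J_u}\theta\,d\h^{N-1}$ on each side, one gets
\begin{equation*}
K_N\pi\,\h^{N-1}(J_n\setminus J_u)\leq\int_\Omega\inti_{\SSS^{N-1}}|\nabla_\omega u|\,d\h^{N-1}(\omega)\,dx+K_N|D^c u|(\Omega)+K_N\int_{J_u}\theta\,d\h^{N-1}.
\end{equation*}
Plugging this into the Euclidean expression and using $\theta\leq(\pi/2)|u^+-u^-|_{\R^{d\times d}}$ together with a bound $\inti_\omega|\nabla_\omega u|\leq C'_{N,d}|\nabla u|$, yields the estimate with coefficient $1+2/\pi$ on $|D^c u|$ and some $C^a(N,d)$ on $|\nabla u|$. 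For $d=2$ or $N=1$, $\nabla u$ has rank at most $1$ (either $\RP$ is one-dimensional, or $\Omega$ is), making the identity $\inti_\omega|\nabla_\omega u|=K_N|\nabla u|$ exact and giving $C^a=1+2/\pi$. Optimality $C^a\geq 1+2/\pi$ for $N\geq 2$ is witnessed by $u(x)=[x/|x|]$ on an annulus $\{r_1<|x|<r_2\}\subset\R^2$: $\int_\Omega|\nabla u|=\pi(r_2-r_1)$, while any lifting requires a branch cut of $\h^1$-length $\geq r_2-r_1$, giving $|n|_{BV,\R^2}\geq(\pi+2)(r_2-r_1)=(1+2/\pi)\int_\Omega|\nabla u|$.

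The main obstacle will be recovering the sharp coefficient $1+2/\pi$ on $|D^j u|$: a naive combination of $|n^+-n^-|\leq\sqrt{2}|u^+-u^-|_{\R^{d\times d}}$ on $J_u$ with the bound above for new jumps gives only $1+\sqrt{2}$ in the worst case $\theta=\pi/2$. Resolution should exploit the fact that flipping sign at a jump with $\theta$ near $\pi/2$ costs essentially nothing in the Euclidean metric (since $2\cos(\theta/2)-2\sin(\theta/2)\to 0$ as $\theta\to\pi/2$), so the topological sign-imbalance driving the branch cut can be redistributed onto existing $J_u$-jumps with large $\theta$, replacing part of the new-jump contribution by a nearly free modification of the existing jump cost.
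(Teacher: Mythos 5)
Your strategy (take the geodesic-optimal lifting of Theorem~\ref{thm:lift} and post-process the estimate) has a genuine gap exactly at the point you flag: the coefficient of $\abs{D^j u}$. Two things go wrong. First, you cannot ``choose $n^\pm$ so that $\dist_\sphere(n^+,n^-)\in[0,\pi/2]$'' on $J_u$: the traces are determined by the lifting you already fixed, and nothing in the inequality $\abs{n}_{BV,\sphere}\leq 2\abs{u}_{BV,\RP}$ prevents that lifting from taking the long jump $\pi-\theta$ (Euclidean cost $2\cos\frac\theta2$) on a large portion of $J_u$; the geodesic bound only controls totals. Second, even granting the short choice everywhere, your own accounting gives at $\theta=\pi/2$ the jump coefficient $\bigl(2\sin\frac\pi4+\frac2\pi\cdot\frac\pi2\bigr)/\sin\frac\pi2=1+\sqrt2>1+\frac2\pi$, and the proposed repair --- redistributing the ``sign-imbalance'' of the branch cut onto existing jumps with $\theta$ near $\pi/2$ --- is a heuristic, not an argument: the inequality $\abs{n}_{BV,\sphere}\leq 2\abs{u}_{BV,\RP}$ retains no information about \emph{where} the new jump set $J_n\setminus J_u$ sits relative to the large-$\theta$ part of $J_u$, so no such redistribution can be extracted from it. This loss of information is precisely why the paper does not post-process the geodesic lifting at all: it re-runs the $SO(d)$-averaging directly in the Euclidean setting, using the Lipschitz truncations $F_\eps$ and the $BV$ chain rule, and computes the averaged jump cost exactly,
\begin{equation*}
\int_G\abs{F(Rn)-F(Rm)}\,d\mu(R)=\frac{\pi-\theta}{\pi}\abs{n-m}+\frac{\theta}{\pi}\abs{n+m}
=\frac2\pi\Bigl((\pi-\theta)\sin\frac\theta2+\theta\cos\frac\theta2\Bigr)\leq\Bigl(1+\frac2\pi\Bigr)\sin\theta ,
\end{equation*}
which is what produces the sharp constant $1+\frac2\pi$ simultaneously on the jump and Cantor parts (estimates \eqref{estim:cantor}--\eqref{estim:jump}); that exact average over rotations is the missing idea in your proposal.

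Two further concrete problems. Your optimality witness is wrong: $u=[x/\abs{x}]$ on an annulus admits the \emph{smooth} lifting $n=x/\abs{x}$, so no branch cut is forced and $\abs{n}_{BV,\R^2}=\int\abs{\nabla u}\,dx$; the correct witness (used in Section~\ref{s:optim}) is the degree-$1/2$ vortex $n(re^{i\theta})=e^{i\theta/2}$, for which every lifting must jump between antipodes along a cut. Also, your use of the representation formula of Theorem~\ref{prop:linkseminorm} requires $\Omega$ bounded and Lipschitz, whereas the statement concerns arbitrary open sets; this would at least need an exhaustion argument, which you do not provide. The diffuse-part bookkeeping via Proposition~\ref{prop:diffuse} and the rank-one observation giving $C^a=1+2/\pi$ for $d=2$ are fine, but the proof as proposed does not reach the stated jump constant.
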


\begin{remark}\label{rem:lifteuclgen} In the proof of Theorem~\ref{thm:lifteucl} we will in fact consider general embeddings $\RP\subset\R^D$. This has the effect of modifying the constant appearing in inequality \eqref{estim:lifteucl} in front of the jump part $\abs{D^j u}$, and it will turn out that the physical embedding \eqref{eq:tensorembedding} provides the optimal constant $1+\frac 2\pi$. Hence, while this choice of embedding was motivated by physical reasons, our result shows that it also stands out at the pure mathematical level. 
\end{remark}

\begin{remark}\label{rem:Ca} For $N\geq 2$, the constant $C^a(N,d)$ that we obtain in the proof of Theorem~\ref{thm:lifteucl} is strictly greater than $1+2/\pi$ if $d>2$, but we believe that this is a limitation of our method and that the optimal constant should be $1+2/\pi$ independently of $d$. However, we will prove in Proposition \ref{prop:new_sem} that the optimal constant is $1+2/\pi$ (independently of $d$ and of $N\geq 2$) if the total variation is given by an averaging formula similar to $|\cdot|_{BV, {\cal N}}$, i.e.,
\begin{align}
\label{def:new_sem}
|||u|||_{BV,\R^{d\times d}}&:=\inti_{\SSS^{N-1}} |D_\omega u|(\Omega)\,  d\h^{N-1}(\omega) \\
\nonumber &=\int_\Omega \inti_{\SSS^{N-1}} |\nabla_\omega u|\,  d\h^{N-1}(\omega) dx + K_N |D^c u|(\Omega)+K_N \int_{J_u}|u^+-u^-|_{\R^{d\times d}}\, d\mathcal H^{N-1},
\end{align}
where $D_\omega u$ is the partial derivative measure of $u$ in direction $\omega\in \SSS^{N-1}$. The difference between $|||u|||_{BV,\R^{d\times d}}$ and $|u|_{BV, \RP}$ lies in the jump cost: Euclidean distance vs. geodesic distance. 
\end{remark}

We restate Theorem~\ref{thm:lifteucl} in the setting relevant to liquid crystals. To this end we denote by $\mathcal S_0\subset\R^{d\times d}$ the space of traceless symmetric matrices ($Q$-tensors)
endowed with the norm $|\cdot|_{\R^{d\times d}}$, and by $\mathcal U_\star\subset \mathcal S_0$ the subset of uniaxial $Q$-tensors with fixed orientational order $s_\star\in\R\setminus \{0\}$, i.e.
\begin{equation*}
\mathcal U_\star = \left\lbrace s_\star\left(n\otimes n-\frac 1d I_d\right)\colon n\in\sphere\right\rbrace,
\end{equation*}
that is diffeomorphic with $\RP$, where $I_d$ is the identity matrix.
We call a map $Q\in BV(\Omega;\mathcal U_\star)$ if $Q\in BV(\Omega;\mathcal S_0)$ and $Q(x)\in\mathcal U_\star$ for a.e. $x\in \Omega$.
We have the following lifting result:

\begin{corollary} 
\label{cor:Qtensor}
Let $\Omega\subset\R^N$ ($N\geq 1$) be an open set. For any $Q\in BV(\Omega;\mathcal U_\star)$ (respectively, $Q\in SBV(\Omega;\mathcal U_\star)$), there exists 
$n\in BV(\Omega;\sphere)$ (respectively, $n\in SBV(\Omega;\sphere)$) such that
\begin{equation*}
Q=s_\star\left(n\otimes n-\frac 1d I_d\right)\qquad\text{a.e. in } \Omega,
\end{equation*}
and
\begin{align*}
\sqrt 2 s_\star \abs{n}_{BV,\R^d}\leq  \left( 1+\frac 2\pi\right)\Big( \abs{D^c Q}(\Omega) + \abs{D^j Q}(\Omega)\Big) + C^a(N,d)\int_\Omega\abs{\nabla Q}\, dx,
\end{align*}
and $C^a(N,d)\geq 1+\frac2{\pi}$ with equality if $d=2$ or $N=1$.
\end{corollary}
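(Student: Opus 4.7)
The plan is to reduce the corollary directly to Theorem~\ref{thm:lifteucl} via an affine identification between the target manifold $\mathcal{U}_\star$ and the image $\Phi(\RP)\subset \R^{d\times d}$ of the physical embedding \eqref{eq:tensorembedding}. Specifically, for $n\in\sphere$ one has
\begin{equation*}
s_\star\Bigl(n\otimes n - \frac{1}{d}I_d\Bigr) = \sqrt{2}\,s_\star\,\Phi([n]) - \frac{s_\star}{d}\,I_d,
\end{equation*}
so setting $u:=\Phi([n])$ yields an affine bijection $Q\leftrightarrow u$ between $\mathcal{U}_\star$-valued maps and $\Phi(\RP)$-valued maps. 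In particular $Q\in BV(\Omega;\mathcal{U}_\star)$ iff $u\in BV(\Omega;\RP)$ (respectively, $Q\in SBV$ iff $u\in SBV$), with $DQ = \sqrt{2}\,s_\star\, Du$ as $\R^{d\times d\times N}$-valued measures. Decomposing into the three parts and using \eqref{distant} for the jump cost of $u$, one obtains the identities $|\nabla Q| = \sqrt{2}|s_\star||\nabla u|$ pointwise a.e., $|D^c Q| = \sqrt{2}|s_\star||D^c u|$ and $|D^j Q| = \sqrt{2}|s_\star||D^j u|$ as measures on $\Omega$.

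Next, I would apply Theorem~\ref{thm:lifteucl} to $u\in BV(\Omega;\RP)$ to obtain a lifting $n\in BV(\Omega;\sphere)$ of $u$ satisfying \eqref{estim:lifteucl}; the relation $\Phi([n])=u$ translates directly into $Q = s_\star(n\otimes n - \frac{1}{d}I_d)$ a.e. Multiplying \eqref{estim:lifteucl} by $\sqrt{2}|s_\star|$ and substituting the three identities above produces exactly the estimate claimed in the corollary, the constant $C^a(N,d)$ together with its sharpness for $d=2$ or $N=1$ being inherited verbatim. For the $SBV$ refinement, if $Q\in SBV$ then $u\in SBV$, and Corollary~\ref{rem:sbv} ensures that every $BV$ lifting of $u$ -- in particular the one just produced -- belongs to $SBV(\Omega;\sphere)$.

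Since the whole argument is a transparent change of variables through an affine map, no genuine obstacle is expected; the only thing to watch out for is the scalar factor $\sqrt{2}|s_\star|$ that reconciles the Euclidean norm on $\mathcal{S}_0$ with the one induced by $\Phi$ on $\Phi(\RP)\subset\R^{d\times d}$. If $s_\star$ is allowed to be negative, the left-hand side of the estimate in the corollary should of course be understood with $|s_\star|$ in place of $s_\star$.
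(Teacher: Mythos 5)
Your proof is correct and is precisely the intended argument: the paper introduces Corollary~\ref{cor:Qtensor} with the words ``We restate Theorem~\ref{thm:lifteucl} in the setting relevant to liquid crystals,'' leaving the affine change of variables $Q = \sqrt{2}\,s_\star\,\Phi([n]) - \frac{s_\star}{d}I_d$ and the resulting rescaling of $DQ$ by $\sqrt{2}\,|s_\star|$ implicit. Your remark about $|s_\star|$ versus $s_\star$ in the final estimate is also a legitimate observation, since the paper only assumes $s_\star\in\R\setminus\{0\}$.
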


The outline of the paper is as follows. In Section \ref{s:optim}, we discuss the optimality of the estimates we found for $BV$ liftings. In Section \ref{s:geod}, we prove the geodesic case, in particular, 
Theorem \ref{thm:lift}, while in Section \ref{s:eucl} we prove the Euclidean case. In Section \ref{sec:1d}, we discuss the case of dimension $N=1$. In Appendix~\ref{a:diffuse} we prove the claims in Remark~\ref{rem:intrinsic} and Proposition~\ref{prop:diffuse} about the diffuse part's total variation. Finally, in Appendix~\ref{a:linkseminorm} we prove Theorem~\ref{prop:linkseminorm} giving the expression of the intrinsic $BV$-energy \eqref{eq:mollif}.

\section{Optimality of our estimates}\label{s:optim}

We start by considering the  case $N=d=2$ for the unit open disc $\Omega=\mathbb D\subset\R^2$ and $u=[n]\in BV(\mathbb D;\mathbb{RP}^1)$ with $n\colon \mathbb D\mapsto \mathbb S^1\subset\R^2\simeq \mathbb{C}$ given in polar coordinates by
\begin{equation}\label{eq:halfvortex}
n(re^{i\theta})=e^{i\frac{\theta}{2}},\quad 0<r<1,\; 0\leq\theta < 2\pi.
\end{equation}
This map $u$  describes a defect of degree $1/2$ that can be observed in liquid crystals and is depicted in Figure~\ref{fig:half}. Moreover, $u$ belongs to $W^{1,p}(\mathbb D;\mathbb{RP}^1)$ for all $p<2$.
We will prove by this example that the constants obtained in Theorem~\ref{thm:lift} and  for $d=2$ in Theorem~\ref{thm:lifteucl} are optimal.

\begin{figure}[!h]

\begin{center}
\begin{tikzpicture}[scale=.5]

\draw[thick] (5:2) -- ++(92.5:1);
\draw[thick] (45:2) -- ++(112.5:1);
\draw[thick] (90:2) -- ++(135:1);
\draw[thick] (135:2) -- ++(157.5:1);
\draw[thick] (180:2) -- ++(180:1);
\draw[thick] (225:2) -- ++(202.5:1);
\draw[thick] (270:2) -- ++(225:1);
\draw[thick] (315:2) -- ++(247.5:1);
\draw[thick] (-5:2) -- ++(-92.5:1);

\draw (0,0) node {$\bullet$};

\draw[thick,->] (10,0) ++(5:2) -- ++(92.5:1);
\draw[thick,->] (10,0) ++(45:2) -- ++(112.5:1);
\draw[thick,->] (10,0) ++(90:2) -- ++(135:1);
\draw[thick,->] (10,0) ++(135:2) -- ++(157.5:1);
\draw[thick,->] (10,0) ++(180:2) -- ++(180:1);
\draw[thick,->] (10,0) ++(225:2) -- ++(202.5:1);
\draw[thick,->] (10,0) ++(270:2) -- ++(225:1);
\draw[thick,->] (10,0) ++(315:2) -- ++(247.5:1);
\draw[thick,->] (10,0) ++(-5:2) -- ++(-92.5:1);

\draw[very thick,red] (10,0) -- (13,0);

\draw (10,0) node {$\bullet$};

\end{tikzpicture}
\end{center}

\caption{Defect of degree 1/2 -- representation of $u$ (left) and $n$ (right).}\label{fig:half}

\end{figure}
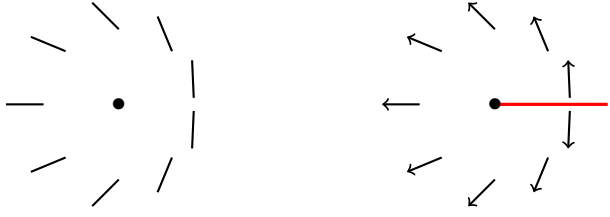

\paragraph{The geodesic case.}
Note that $n$ has a jump along the radius ${\cal R}:=\lbrace \theta=0 \rbrace=[0,1)\times\lbrace 0\rbrace$ but $u=[n]$ is locally Lipschitz in $\mathbb D\setminus\lbrace 0\rbrace$. Moreover, 
$u$ and $n$  are smooth away from $\cal R$; since $\mathbb{RP}^1$ is locally isometric to $\mathbb S^1$, any isometric embedding $\mathbb{RP}^1\subset \R^D$ will be such that for any $\omega\in\mathbb S^1$ it holds $\abs{\nabla_\omega u}_{\mathbb R^D}=\abs{\nabla_\omega n}_{\mathbb R^2}$ in $\mathbb D\setminus {\cal R}$. Here $\nabla_\omega n = (\nabla n)\omega$ is the approximate gradient of $n\in BV(\mathbb D;\R^2)$ in direction $\omega$, and in polar coordinates it holds $\nabla n(re^{i\theta})=\frac{i}{2r}e^{i\frac{\theta}{2}}\otimes ie^{i\theta}$ for $0<r<1$, $0\leq\theta < 2\pi$. Therefore, we have by \eqref{eq:linkseminorm} that
\begin{align*}
\abs{u}_{BV,\mathbb{RP}^1}& = \int_{\mathbb D} \inti_{\mathbb S^1} \abs{\nabla_\omega n}\, d\mathcal H^1(\omega) \, dx\\
& =\int_0^1 \int_0^{2\pi} \frac 1{2r}\inti_{\mathbb S^1} \abs{\omega\cdot ie^{i\theta}}\,d\mathcal H^{1}(\omega)\, r d\theta \,dr \\
& = K_2 \pi.
\end{align*}
On the other hand, by \eqref{eq:linkseminorm}, it holds $\abs{n}_{BV,\mathbb S^1}=2 K_2\pi$ as $|D^j n|(\mathbb D)=\pi \mathcal{H}^1(\cal R)=\pi$.
To prove optimality of \eqref{estim:lift} it remains to show that other $BV$ liftings cannot have a smaller $BV$-energy. Indeed, 
let $\widetilde n\in BV(\mathbb D;\mathbb S^1)$ be a lifting of $u$. For a.e. $r\in (0,1)$, the restriction of $\widetilde n$ to the circle $C(0,r)$ centered at $0$ of radius $r$ is $BV$. This restriction must have at least one jump between two opposite vectors since $[\widetilde n]=u$. Such jump costs $\pi= \dist(\tilde n,-\tilde n)$. Moreover the absolutely continuous part of the tangential derivative of $\tilde n$ has the same total variation as the one of $n$, i.e. $r^{-1}|\partial_\theta n|$.  Hence using \eqref{eq:linkseminorm}, the properties of one-dimensional restriction of $BV$ maps \cite[Section~3.11]{ambrosiofuscopallara} and polar coordinates, we find that
\begin{equation*}
\abs{\widetilde n}_{BV,\mathbb S^1}\geq K_2 \int_0^1 \left(\int_0^{2\pi} \abs{\partial_\theta n}\,d\theta + \pi\right) dr = 2 K_2\pi = 2\abs{u}_{BV,\mathbb{RP}^1}.
\end{equation*}
This shows optimality of the constant $2$ in the estimate of Theorem~\ref{thm:lift} for $N=d=2$.

\begin{remark}
\label{rem:21}
For arbitrary $N\geq 2$ and $d\geq 2$ it suffices to extend the above example constantly in the additional variables, i.e., consider the cylindrical domain $\Omega = \mathbb D\times (0,1)^{N-2}\subset\R^N$ and
\begin{equation*}
u=[n] \, \textrm{ with }\, n(re^{i\theta},y)=e^{i\theta/2}, \, r\in (0,1), \theta\in [0, 2\pi), y\in (0,1)^{N-2},
\end{equation*}
and identify its target $\mathbb S^1$ with $\mathbb S^1\times\lbrace 0_{\R^{d-2}}\rbrace \subset\mathbb S^{d-1}$. Note that $u=[n]\in W^{1,p}(\Omega;\RP)$ for all $p<2$ and $u$ admits no $W^{1,p}$ lifting (see \cite{bethuelchiron07}), but only $BV$ liftings.
\end{remark}

\paragraph{The Euclidean case.}
Let $u=[n]$ within the isometric embedding \eqref{eq:tensorembedding}, i.e., $\Phi(u)=\frac{1}{\sqrt{2}}n\otimes n$ where $n$ is given in 
\eqref{eq:halfvortex}. By the above computation, it holds
\begin{align*}
\abs{u}_{ BV,\R^{2\times 2}}=\int_{\mathbb D} \abs{\nabla n}\, dx = \pi,
\end{align*}
 where as above $\nabla n$ is the approximate gradient of $n$.
For any $BV$ lifting $\widetilde n$ of $u$, the restriction of $\widetilde n$ to a.e. circle $C(0,r)$ with $r\in (0,1)$ must have at least one jump between two opposite vectors, and such jump costs 
$2= \abs{\tilde n-(-\tilde n)}$. 
Moreover the absolutely continuous part of the tangential derivative of $\tilde n$ has the total variation $r^{-1}|\partial_\theta n|$, therefore $|D^a \tilde n|\geq r^{-1}|\partial_\theta n|\, dx$ as measures in $\mathbb D$.
Hence, we have
\begin{equation*}
\abs{\widetilde n}_{ BV,\R^2}\geq \abs{n}_{ BV,\R^2}=\int_0^1 \left(\int_0^{2\pi} \abs{\partial_\theta n}\, d\theta + 2 \right) dr = \pi + 2 = \left(1+\frac{2}{\pi}\right)\abs{u}_{ BV,\R^{2\times 2}}.
\end{equation*}
This shows optimality of the constant in Theorem~\ref{thm:lifteucl} for $N=d=2$. For arbitrary $N\geq 2$ and $d\geq 2$, it suffices to extend the above example constantly in the additional variables in a cylindrical domain (as in Remark \ref{rem:21}).

\section{``Geodesic'' lifting. Proof of Theorem~\ref{thm:lift}}\label{s:geod}

The proof of Theorem~\ref{thm:lift} relies on ideas introduced in \cite{davilaignat03} where the case of $BV$ liftings of $\SSS^1$-valued maps was analyzed. Our main contribution in this paper consists in adapting those ideas to the case of $\RP$-valued maps, using new tools based on the group of special rotations $G:=SO(d)$ endowed with the Haar measure. More precisely, we start by considering a measurable map $F\colon\sphere\to\sphere$ such that
\begin{equation}\label{F}
F(n)=\begin{cases}
n &\text{if }n\cdot e_d >0,\\
-n &\text{if }n\cdot e_d < 0
\end{cases}
\quad \textrm{and}\quad F(n)=F(-n),\, \forall n\in\sphere,
\end{equation}
where $e_d=(0,\dots, 0,1)\in \R^d$. Since $F$ is symmetric, there exists a measurable map $L\colon\RP\to\sphere$ such that
\begin{equation*}
L([n])=F(n),\quad\forall n\in\sphere.
\end{equation*}
Given $u\in BV(\Omega;\RP)$, the map $n=L(u)$ satisfies $[n]=u$ a.e. in $\Omega$, but since $L$ is not Lipschitz one cannot in general expect $n$ to belong to $BV(\Omega;\sphere)$. 
To remedy this problem we consider the following symmetric map for any special rotation $R\in G:=SO(d)$:
\begin{equation*}
F_R\colon\sphere\to\sphere,\quad F_R(n)=R^{-1}F(Rn),
\end{equation*}
and the corresponding lifting map $L_R\colon\RP\to\sphere$ given by
\begin{equation*}
L_R([n])=F_R(n), \quad \forall n\in \sphere.
\end{equation*}
We claim that for any $u\in BV(\Omega;\RP)$ one may choose $R\in G$ such that $n:=L_R(u)$ belongs to $BV(\Omega;\sphere)$ and satisfies the estimate \eqref{estim:lift}. The main ingredient is the following averaging inequality over the group $G$ endowed with the normalized Haar 
measure $\mu$. We recall that $\mu$ is the unique regular Borel measure $\mu$ on $G$ satisfying
\begin{align*}
&\mu(R\cdot A)=\mu(A\cdot R)=\mu(A),\quad\forall A\in\mathrm{Bor}(G),\, \forall R\in G,
\end{align*}
and $\mu(G)=1$. In particular, the pushforward measure of $\mu$ under the map $R\in G \mapsto Rn\in\sphere$ (for an arbitrary fixed $n \in \sphere$) is a rotation-invariant measure on $\sphere$  and therefore, proportional to $\mathcal H^{d-1}\lfloor\sphere$; in other words,
for every $n\in\sphere$ and any Borel set $S\in\mathrm{Bor}(\sphere)$,
\begin{equation}\label{defmu}
\mu\left(\lbrace R\colon Rn\in S\rbrace\right)=\frac{1}{\lambda_d}\mathcal H^{d-1}(S),
\end{equation}
where $\lambda_d=\mathcal H^{d-1}(\sphere)$.

\begin{lemma}\label{lem:averageBV}
For any $u\in BV(\Omega,\RP)$ it holds
\begin{align*}
\int_G  \iint_{\Omega\times \Omega} \, &\frac{\dist_{\sphere} (L_R(u(x)),L_R(u(y)))}{\abs{x-y}}\rho_\varepsilon(\abs{x-y})\, dx dy\, d\mu(R)\\ 
& \leq 2  \iint_{\Omega\times \Omega}  \frac{\dist_{\RP}(u(x),u(y))}{\abs{x-y}}\rho_\varepsilon(\abs{x-y}) \, dx dy,
\end{align*}
where $\rho_\eps$ is any family of nonnegative radial functions. 
\end{lemma}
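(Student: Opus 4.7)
Since the integrand is nonnegative and (jointly) measurable, Tonelli's theorem lets us swap the integration in $R\in G$ with the double integration in $(x,y)\in\Omega\times\Omega$. It therefore suffices to establish the pointwise inequality
\begin{equation}\label{eq:ptwise}
\int_G \dist_\sphere\bigl(L_R(p),L_R(q)\bigr)\, d\mu(R)\leq 2\, \dist_{\RP}(p,q)\qquad\forall\,p,q\in\RP,
\end{equation}
and then multiply by $\rho_\eps(|x-y|)/|x-y|$ and integrate in $(x,y)$, with the choice $p=u(x)$, $q=u(y)$.

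To prove \eqref{eq:ptwise}, fix representatives $p=[n]$, $q=[m]$ with $n,m\in\sphere$, and set $\alpha:=\dist_\sphere(n,m)\in[0,\pi]$, so that $\dist_{\RP}(p,q)=\min(\alpha,\pi-\alpha)$. Since $R^{-1}$ is an isometry of $\sphere$ and $F(v)=\pm v$ according to the sign of $v\cdot e_d$, we have
\begin{equation*}
\dist_\sphere(L_R([n]),L_R([m]))=\dist_\sphere(F(Rn),F(Rm))=
\begin{cases}
\alpha &\text{if }(Rn\cdot e_d)(Rm\cdot e_d)>0,\\
\pi-\alpha &\text{if }(Rn\cdot e_d)(Rm\cdot e_d)<0,
\end{cases}
\end{equation*}
up to a $\mu$-null set of rotations for which one of the two scalar products vanishes. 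Thus \eqref{eq:ptwise} reduces to computing the probability
\begin{equation*}
p_1:=\mu\bigl(\{R\in G\colon (Rn\cdot e_d)(Rm\cdot e_d)<0\}\bigr).
\end{equation*}

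Writing $Rn\cdot e_d=n\cdot v$ and $Rm\cdot e_d=m\cdot v$ with $v:=R^{-1}e_d$, and noting that the pushforward of $\mu$ under $R\mapsto R^{-1}e_d$ is the normalized surface measure on $\sphere$ (this follows from \eqref{defmu} applied to the map $R\mapsto R^{-1}e_d$, using the invariance of $\mu$ under $R\mapsto R^{-1}$), we obtain
\begin{equation*}
p_1=\frac{1}{\lambda_d}\mathcal H^{d-1}\bigl(\{v\in\sphere\colon (n\cdot v)(m\cdot v)<0\}\bigr).
\end{equation*}
The signs of $n\cdot v$ and $m\cdot v$ depend only on the orthogonal projection of $v$ onto $\mathrm{span}(n,m)$, so Fubini's theorem applied to the decomposition $v=v_\parallel+v_\perp$ reduces the computation to the two-dimensional case $\sphere=\SSS^1$, where the region where two halfplanes with normals at angle $\alpha$ disagree is the union of two arcs of total length $2\alpha$. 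Hence
\begin{equation*}
p_1=\frac{\alpha}{\pi}, \qquad 1-p_1=\frac{\pi-\alpha}{\pi},
\end{equation*}
and therefore
\begin{equation*}
\int_G \dist_\sphere(L_R([n]),L_R([m]))\, d\mu(R)=\frac{\pi-\alpha}{\pi}\alpha+\frac{\alpha}{\pi}(\pi-\alpha)=\frac{2\alpha(\pi-\alpha)}{\pi}.
\end{equation*}

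It remains to verify that $\frac{2\alpha(\pi-\alpha)}{\pi}\leq 2\min(\alpha,\pi-\alpha)$, which is immediate: the left-hand side equals $2\min(\alpha,\pi-\alpha)\cdot\max(\alpha,\pi-\alpha)/\pi$ and $\max(\alpha,\pi-\alpha)\leq \pi$. This proves \eqref{eq:ptwise} and concludes the lemma.

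The only non-routine step is the computation of $p_1$, where one must pair the observation that $R\mapsto R^{-1}e_d$ equidistributes over $\sphere$ with a reduction to two dimensions; everything else is bookkeeping and elementary trigonometry.
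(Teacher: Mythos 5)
Your proof is correct and arrives at exactly the same intermediate identity as the paper's Lemma~4.2, namely
\begin{equation*}
\int_G \dist_\sphere(F(Rn),F(Rm))\,d\mu(R)=\frac{2}{\pi}\,\alpha(\pi-\alpha),\qquad \alpha=\dist_\sphere(n,m),
\end{equation*}
but you compute the key probability $p_1=\mu\bigl(\{R\colon(Rn\cdot e_d)(Rm\cdot e_d)<0\}\bigr)$ by a genuinely different route. The paper shows that $\theta\mapsto\mu(G_n^+\cap G_m^-)$ satisfies the Cauchy functional equation $\psi(\theta+\xi)=\psi(\theta)+\psi(\xi)$, deduces linearity from continuity, and pins down the constant by evaluating at $\theta=\pi/2$. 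You instead push $\mu$ forward under $R\mapsto R^{-1}e_d$ to the uniform measure on $\sphere$ (using $\mu$-invariance under inversion together with the paper's \eqref{defmu}) and then compute the normalized measure of the double wedge $\{v\colon(n\cdot v)(m\cdot v)<0\}$ directly, getting $\alpha/\pi$. Your approach is shorter and more geometric; the paper's avoids any explicit coordinate computation and would generalize to settings where the geometry of the relevant sets is less transparent. Both give the same constant, so there is no discrepancy.

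One step in your proof is stated more loosely than it should be: the sentence ``Fubini's theorem applied to the decomposition $v=v_\parallel+v_\perp$ reduces the computation to $\SSS^1$.'' The sphere $\sphere$ is not a product of $\SSS^1$ with anything, so Fubini does not literally apply. The clean version of your argument is a pushforward/symmetry argument: the map $v\mapsto v_\parallel/\lvert v_\parallel\rvert\in\SSS^1$ (defined off a set of $\mathcal H^{d-1}$-measure zero) pushes the normalized surface measure on $\sphere$ forward to a rotation-invariant probability measure on $\SSS^1$, hence to the uniform one, and the indicator of your set factors through this map; alternatively one may invoke the coarea formula. This is a fixable gap in exposition, not in the mathematics, but it is the only non-routine point of the proof and deserves a precise justification.
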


\begin{proof}[Proof of Theorem \ref{thm:lift}]
This is a direct consequence of Lemma \ref{lem:averageBV} and Fatou's lemma when passing to the liminf as $\eps\to 0$: indeed, by averaging over $G$, there exists $R_0\in G$ such that
\begin{align*}
\abs{L_{R_0}(u)}_{BV,\sphere}&\stackrel{\eqref{eq:mollif}}{\leq} \int_G \liminf_{\eps\to 0} \iint_{\Omega\times\Omega}
\, \frac{\dist_{\sphere} (L_R(u(x)),L_R(u(y)))}{\abs{x-y}}\rho_\varepsilon(|x-y|)\, dx dy\, d\mu(R)\\
&\leq \liminf_{\eps\to 0} 
\int_G  \iint_{\Omega\times \Omega} \, \frac{\dist_{\sphere} (L_R(u(x)),L_R(u(y)))}{\abs{x-y}}\rho_\varepsilon(|x-y|)\, dx dy\, d\mu(R) \\
&\leq 2\liminf_{\eps\to 0}\iint_{\Omega\times \Omega}  \frac{\dist_{\RP}(u(x),u(y))}{\abs{x-y}}\rho_\varepsilon(|x-y|) \, dx dy =2\abs{u}_{ BV,\RP},
\end{align*}
where the last inequality is due to Lemma~\ref{lem:averageBV}.
\end{proof}

In order to prove Lemma~\ref{lem:averageBV} we start by proving the following:
\begin{lemma}\label{lem:averagedist}
For any $n,m\in\sphere$ it holds
\begin{equation*}
\int_G  \dist_\sphere (F(Rn),F(Rm))\, d\mu(R) = \frac{2}{\pi}\dist_\sphere(n,m)\dist_\sphere(-n,m).
\end{equation*}
\end{lemma}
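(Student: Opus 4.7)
The plan is to reduce the integral over $G$ to a geometric probability computation on $\sphere$. First observe that $F(n) = \mathrm{sign}(n\cdot e_d)\, n$, so
\[
F(Rn) = \mathrm{sign}(Rn\cdot e_d)\, Rn = \mathrm{sign}(n\cdot R^{-1}e_d)\, Rn,
\]
and similarly for $m$. Setting $\nu := R^{-1}e_d\in\sphere$ and using the rotation-invariance of the geodesic distance on $\sphere$,
\[
\dist_\sphere\bigl(F(Rn),F(Rm)\bigr) = \dist_\sphere\bigl(\mathrm{sign}(n\cdot\nu)\, n,\,\mathrm{sign}(m\cdot\nu)\, m\bigr),
\]
which equals $\dist_\sphere(n,m)$ when $(n\cdot\nu)(m\cdot\nu)>0$ and $\dist_\sphere(n,-m)=\pi-\dist_\sphere(n,m)$ when $(n\cdot\nu)(m\cdot\nu)<0$ (the borderline set is $\mathcal H^{d-1}$-negligible).

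Next I would change variables to remove the integral over $G$. By uniqueness of Haar measure on the compact group $G=SO(d)$, $\mu$ is invariant under inversion $R\mapsto R^{-1}$; combined with \eqref{defmu} applied at $n=e_d$ this shows that the pushforward of $R\mapsto R^{-1}e_d$ under $\mu$ is the normalized uniform measure $\lambda_d^{-1}\mathcal H^{d-1}\lfloor\sphere$. Writing $\theta:=\dist_\sphere(n,m)\in[0,\pi]$, the left-hand side therefore equals $\theta\, p_+ + (\pi-\theta)\, p_-$, where $p_\pm$ denotes the normalized uniform measure of the set of $\nu\in\sphere$ on which $n\cdot\nu$ and $m\cdot\nu$ have, respectively, the same or opposite sign.

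The one substantive step that remains is to compute $p_-$. I would do this by observing that the sign condition depends on $\nu$ only through its orthogonal projection onto the 2-plane $\Pi=\mathrm{span}(n,m)$. By rotational invariance of the uniform measure on $\sphere$ under rotations fixing $\Pi^\perp$ pointwise, the angular coordinate of this projection (on the event, of full measure, that it is nonzero) is uniformly distributed on $\Pi\cap\sphere\simeq\mathbb S^1$. In this planar picture the two lines $\{n\cdot\nu=0\}$ and $\{m\cdot\nu=0\}$ meet at angle $\theta$ and cut the circle into four arcs, the two opposing-sign arcs having total length $2\theta$. Hence $p_-=\theta/\pi$ and $p_+=1-\theta/\pi$, and substituting gives
\[
\theta\Bigl(1-\tfrac{\theta}{\pi}\Bigr) + (\pi-\theta)\tfrac{\theta}{\pi} \;=\; \frac{2}{\pi}\theta(\pi-\theta) \;=\; \frac{2}{\pi}\dist_\sphere(n,m)\dist_\sphere(-n,m),
\]
as claimed. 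The only genuinely geometric step is the planar lune measure identification, and it reduces to a one-dimensional arclength computation; everything else is bookkeeping with Haar-measure invariance.
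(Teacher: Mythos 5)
Your proof is correct, and it takes a genuinely different route from the paper's. Both arguments begin the same way: reduce the integral to the weighted sum $\theta\, p_+ + (\pi-\theta)\, p_-$, where $p_\pm$ is the Haar measure of the event that $F$ acts with the same (resp. opposite) sign on $Rn$ and $Rm$. The difference is how the key quantity $p_-=\theta/\pi$ is established. The paper sets $\varphi(n,m)=\mu(G_n^+\cap G_m^-)$, uses $G$-invariance to show it is a function $\psi(\theta)$ of the angle alone, proves the Cauchy functional equation $\psi(\theta+\xi)=\psi(\theta)+\psi(\xi)$ by a set-theoretic argument with the one-parameter rotation family $R_\theta$, and then pins down the slope via $\psi(\pi/2)=1/4$. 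You instead push $\mu$ forward to the uniform measure on $\sphere$ via $\nu=R^{-1}e_d$, note that the sign pattern $(\operatorname{sign}(n\cdot\nu),\operatorname{sign}(m\cdot\nu))$ depends only on the orthogonal projection of $\nu$ onto $\operatorname{span}(n,m)$, and use invariance under the $SO(2)$ subgroup fixing that plane to reduce to the one-dimensional ``lune'' computation: the two great circles $\{n\cdot\nu=0\}$ and $\{m\cdot\nu=0\}$ cut the planar circle into four arcs of lengths $\theta,\pi-\theta,\theta,\pi-\theta$, and the opposite-sign arcs have total length $2\theta$. This is the classical random-hyperplane argument (the same lemma underlies Grothendieck-type identities and MAX-CUT rounding); it is shorter and more geometric than the paper's functional-equation route, though both rest on the same invariance principles for Haar measure. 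One small stylistic point: you invoke invariance of $\mu$ under $R\mapsto R^{-1}$ to identify the law of $R^{-1}e_d$, but this is not strictly needed, since \eqref{defmu} with $n=e_d$ already gives the law of $Re_d$ and one can just as well write $\nu=Re_d$ and use $Rn\cdot e_d = n\cdot {}^t\!R\, e_d$; either way the conclusion is the same.
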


\begin{proof}[Proof of Lemma \ref{lem:averagedist}]
Given $n\in\sphere$ we split $G$ into the partition:
\begin{equation}\label{Gsplit}
G=G_n^+ \sqcup G_n^- \sqcup Z_n,
\end{equation}
where
\begin{equation*}
G_n^+ =\left\lbrace R\in G\colon (Rn)\cdot e_d >0\right\rbrace,\quad G_n^- =\left\lbrace R\in G\colon (Rn)\cdot e_d <0\right\rbrace,
\end{equation*}
and $Z_n$ is $\mu$-negligible since 
\begin{equation*}
\mu(Z_n)=\mu\left(\left\lbrace R\in G \colon Rn\cdot e_d =0 \right\rbrace\right)\stackrel{\eqref{defmu}}{=}\frac{1}{\lambda_d}\mathcal H^{d-1}\left(\left\lbrace \omega\in\mathbb S^{d-1} \colon\omega\cdot e_d =0 \right\rbrace\right)=0.
\end{equation*}
Splitting the integral according to \eqref{Gsplit}, we obtain
\begin{align*}
\int_G d\mu(R) \dist (F(Rn),F(Rm)) & =\mu((G_n^+\cap G_m^+)\sqcup (G_n^-\cap G_m^-))\dist(n,m)\\
&\quad + \mu((G_n^+\cap G_m^-)\sqcup (G_n^-\cap G_m^+))\dist(-n,m).
\end{align*}
We claim that it holds
\begin{equation}\label{claim:meas}
\mu((G_n^+\cap G_m^-)\sqcup (G_n^-\cap G_m^+))=\frac 1\pi\dist(n,m).
\end{equation}
Since $\mu(G)=1$ and $\dist(-n,m)=\pi-\dist(n,m)$ this will imply
\begin{align*}
\mu((G_n^+\cap G_m^+)\sqcup (G_n^-\cap G_m^-))& = \frac 1\pi \dist(-n,m),
\end{align*}
which completes the proof of Lemma~\ref{lem:averagedist}, up to proving the claim \eqref{claim:meas}.
For that, we will make repeated use of the (double-sided) $G$-invariance of $\mu$ and the fact that
\begin{equation}\label{eq:RGnpm}
G_{Rn}^\pm=G_n^\pm R^{-1}\quad\text{ for all }R\in G \textrm{ and } n\in \sphere.
\end{equation} 
If $n$ and $m$ are not collinear \footnote{ If $n=m$ (respectively, $n=-m$), then $G_n^+\cap G_m^-=G_n^-\cap G_m^+=\emptyset$ (respectively, $(G_n^+\cap G_m^-)\cup (G_n^-\cap G_m^+)=G_n^+\cup G_n^-
\stackrel{\eqref{Gsplit}}{=}G\setminus Z_n$) so that \eqref{claim:meas} is obvious.}, choosing $R_\pi$ to be the rotation of angle $\pi$ in the $2$-plane $\langle n,m\rangle$ spanned by $n$ and $m$ and the identity in its orthogonal, we find that 
\begin{equation*}
( G_n^+\cap G_m^- ) R_\pi^{-1}= G_n^-\cap G_m^+,
\end{equation*}
and therefore
\begin{equation*}
\mu((G_n^+\cap G_m^-)\sqcup (G_n^-\cap G_m^+))=2\mu(G_n^+\cap G_m^-),
\end{equation*}
so that \eqref{claim:meas} reduces to 
\begin{equation}\label{claim:measbis}
\mu(G_n^+\cap G_m^-)=\frac{1}{2\pi}\dist(n,m).
\end{equation}
To show \eqref{claim:measbis} we define the continuous function $\varphi\colon\sphere\times\sphere\to [0,1]$ given by
\begin{equation*}
\varphi(n,m):=\mu(G_n^+\cap G_m^-), \, \forall n, m\in \sphere.
\end{equation*}
Using again \eqref{eq:RGnpm} and the $G$-invariance of $\mu$ we obtain
\begin{equation*}
\varphi(Rn,Rm)=\varphi(n,m),\qquad\forall n,m\in\sphere,\, R\in G.
\end{equation*}
Therefore $\varphi(n,m)$ is a function of the scalar product $(n\cdot m)$,
or equivalently  a function of $\dist(n,m)=\arccos(n\cdot m)\in [0, \pi]$. In other words, there exists a continuous function $\psi\colon [0,\pi]\to [0,1]$ such that
\begin{equation*}
\varphi(n,m)=\psi(\dist(n,m))\qquad\forall n,m\in\sphere.
\end{equation*}
The function $\psi$ can be expressed as
\begin{equation*}
\psi(\theta)=\varphi(e_d,\cos\theta\, e_d +\sin\theta \, e_{d-1})=\varphi(e_d, R_\theta e_d)\qquad\forall\theta\in [0,\pi],
\end{equation*}
where $R_\theta\in G$ is the rotation that maps $e_d$ to  $(\cos\theta\, e_d +\sin\theta \, e_{d-1})$ and acts as the identity on the subspace of $\R^d$ spanned by $\langle e_1,\ldots,e_{d-2}\rangle$. Let $\theta\in [0,\pi)$ and $\xi\in [0,\pi-\theta]$. For any $n\in\mathbb S^{d-1}$ one can check the following implications
\begin{align*}
(n\cdot e_d>0\text{ and }n\cdot R_\theta e_d<0)\quad & \Longrightarrow\quad n\cdot R_{\theta+\xi} e_d <0,\\
(n\cdot R_{\theta+\xi} e_d < 0\text{ and }n\cdot R_\theta e_d > 0)\quad & \Longrightarrow\quad n\cdot  e_d > 0,
\end{align*}
which yield
\begin{align*}
G_{e_d}^+\cap G_{R_{\theta+\xi} e_d}^-\cap G_{R_{\theta} e_d}^- & = G_{e_d}^+\cap G_{R_{\theta} e_d}^-,\\
G_{e_d}^+\cap G_{R_{\theta+\xi} e_d}^-\cap G_{R_{\theta} e_d}^+ & = G_{R_{\theta+\xi} e_d}^-\cap G_{R_{\theta} e_d}^+ \stackrel{\eqref{eq:RGnpm}}{=} \left( G_{R_{\xi} e_d}^-\cap G_{ e_d}^+\right) R_\theta^{-1}.
\end{align*}
As a consequence, the definition of $\varphi$ implies
 $\psi(\theta+\xi)=\psi(\theta)+\psi(\xi)$. As $\psi$ is continuous, 
we deduce that $\psi(\theta)=\lambda\theta$ for some $\lambda\in\mathbb R$. Now, we claim that $\psi(\pi/2)=1/4$, so that $\lambda=1/(2\pi)$ and this proves \eqref{claim:measbis}.
To prove that $\psi(\pi/2)=1/4$ it suffices to remark that for $\theta=\pi/2$ we have $R_{\pi/2}e_d=e_{d-1}$ and $R_{\pi/2}e_{d-1}=-e_{d}$, so that the sets $(G_{e_d}^+ \cap G_{e_{d-1}}^-)$ and $(G_{e_d}^+\cap G_{e_{d-1}}^+)$ have the same measure under $\mu$ (so, equal to 
$\frac12\mu(G_{e_d}^+)=\frac14\mu(G)=\frac14$)
because it holds
\begin{equation*}
\left( G_{e_d}^+ \cap G_{e_{d-1}}^-\right)R_{\pi/2}^{-1}\stackrel{\eqref{eq:RGnpm}}{=}G_{e_{d-1}}^+\cup G_{e_d}^+.
\end{equation*}
\end{proof}

\begin{proof}[Proof of Lemma~\ref{lem:averageBV}]
Pick one measurable map $n$ such that $[n]=u$ a.e. Then, using the same argument as in \cite{merlet06}, by Fubini's theorem and Lemma~\ref{lem:averagedist} we have
\begin{align*}
\int_G d\mu(R)& \iint_{\Omega\times \Omega} dx dy\, \frac{\dist (L_R(u(x)),L_R(u(y)))}{\abs{x-y}}\rho_\varepsilon(\abs{x-y})\\
& = \iint_{\Omega\times \Omega} dx dy\, \frac{\rho_\varepsilon(\abs{x-y})}{\abs{x-y}}\int_G d\mu(R)\,\dist(F(Rn(x)),F(Rn(y))) \\
& = \frac 2\pi \iint_{\Omega\times \Omega}  \frac{\rho_\varepsilon(\abs{x-y})}{\abs{x-y}}\dist(n(x),n(y))\dist(-n(x),n(y))\, dx dy\,\\
& =\frac 2\pi \iint_{\Omega\times \Omega}  \frac{\rho_\varepsilon(\abs{x-y})}{\abs{x-y}}\dist(u(x),u(y))(\pi-\dist(u(x),u(y)))\, dx dy\\
& \leq  2 \iint_{\Omega\times \Omega} \frac{\dist(u(x),u(y))}{\abs{x-y}}\rho_\varepsilon(\abs{x-y}) \, dx dy.
\end{align*}
\end{proof}

\begin{proof}[Proof of Theorem \ref{thm:thm123}]
The existence of a $BV$ lifting of $u\in BV(\Omega; \RP)$ for an arbitrary open set $\Omega\subset \R^N$ is proved in Theorem \ref{thm:lift}. Assume now that $\Omega$ is bounded and Lipschitz and $n_0\in L^1(\partial \Omega; \sphere)$ is a prescribed lifting of $u$ at the boundary. Let $\tilde n\in BV(\Omega; \sphere)$ be a lifting of $u$ in $\Omega$ (not necessarily equal to $n_0$ at the boundary). By the trace theorem for $BV$ functions (see e.g. \cite[Theorem~3.88]{ambrosiofuscopallara}), we know that $\tilde n$ has an $L^1(\partial \Omega; \sphere)$ trace at the boundary and $[\tilde n]=[n_0]=u$ $\h^{N-1}$-a.e. on $\partial \Omega$. Set $f=\tilde n\cdot n_0$ on $\partial \Omega$. Then $f$ takes only the values $\{\pm 1\}$ as $\tilde n$ and $n_0$ are two possible orientations of the same line field $u$ at the boundary $\partial \Omega$. In particular, 
$f\in L^1(\partial \Omega; \{\pm 1\})$. Then one chooses an extension $\tilde f\in W^{1,1}(\Omega; [-1,1])$ of $f$ in $\Omega$ (for example, the harmonic extension of $f$ in 
$\Omega$ satisfies that property). By the co-area formula, $\tilde f$ has almost every level set of finite perimeter, in particular there exists $\alpha\in (-1,1)$ such that the characteristic function ${\bf 1}_{\{\tilde f>\alpha\}}$ is of bounded variation in $\Omega$. Set $\bar f=2\, {\bf 1}_{\{\tilde f>\alpha\}}-1$ in $\Omega$. Then $\bar f\in BV(\Omega; \{\pm 1\})$ and $\bar f=\tilde f=f$ $\h^{N-1}$-a.e. on $\partial \Omega$. Now, one considers $n=\bar f \tilde n$. As $\bar f$ and $\tilde n$ are $BV\cap L^\infty$ maps in $\Omega$, then their product $n$ is $BV$ in $\Omega$ with values into $\sphere$; moreover, $[n]=[\tilde n]=u$ a.e. in $\Omega$ and $n=(\bar f)^2 n_0=n_0$ $\h^{N-1}$-a.e. on $\partial \Omega$.
\end{proof}

\section{``Euclidean'' lifting. Proof of Theorem~\ref{thm:lifteucl}.}\label{s:eucl}

As explained in the introduction, when we measure the jumps in the $BV$-energy, we may want to use Euclidean distances instead of geodesic distances. In that case, the choice of an isometric embedding is crucial.  For $\sphere$ we stick to the canonical embedding
$\sphere\subset\R^d$,
and for $n\in BV(\Omega;\sphere)$ we denote by $\abs{n}_{BV,\R^d}$ the usual $BV$-seminorm of $n\in BV(\Omega;\R^d)$, i.e. the total variation norm of $Dn$ as a $\R^{d\times N}$-valued measure. 

For $\RP$ it is not obvious what a canonical embedding should be. Physics provides us with the natural embedding \eqref{eq:tensorembedding}, but to understand better the effect of this choice we will also consider general isometric embeddings
\begin{equation*}
\Phi\colon\RP\to\R^D.
\end{equation*}
We denote by $\overline\Phi\colon\sphere\to\R^D$ the canonically associated map on the sphere $\sphere$, i.e., $\overline \Phi(n)=\Phi([n])$ for all $n\in \sphere$. For $u\in BV(\Omega;\RP)$ we will identify $u$ with $\Phi(u)\in BV(\Omega;\R^D)$ and denote by 
$\abs{u}_{BV, \Phi}$ the usual $BV$ seminorm of $u\in BV(\Omega;\R^D)$, i.e. the total variation norm of $Du$ as a $\R^{D\times N}$-valued measure.
We also denote by $D^c u$ the Cantor part, by $D^j u$ the jump part of the differential $Du$ of $u\in BV(\Omega;\R^D)$ and by $\nabla u$ its approximate gradient.

\begin{theorem}\label{thm:lifteuclgen}
Let $\Omega\subset\R^N$ ($N\geq 1$) be an open set. For any $u\in BV(\Omega;\RP)$, there exists a lifting $n\in BV(\Omega;\sphere)$ with $u=[n]$ a.e. in $\Omega$ and
\begin{equation}\label{estim:lifteuclgen}
\abs{n}_{BV,\R^d}\leq \left(1+\frac 2\pi\right)\abs{D^c u}(\Omega) + C^j(\Phi)\abs{D^j u}(\Omega) + C^a(N,d)\int_\Omega\abs{\nabla u}\, dx,
\end{equation}
where
\begin{align*}
C^j(\Phi)&=\frac 2\pi \sup 
\left\lbrace 
\frac{\theta\cos\frac\theta 2 + (\pi-\theta)\sin\frac\theta 2}{\abs{\overline\Phi(n)-\overline\Phi(m)}}\colon n,m\in\sphere,\; \theta=\arccos(n\cdot m) \in [0, \pi]\right\rbrace,\\
C^a(N,d)&=1+\frac{2}{\mathcal H^{d-1}(\mathbb S^{d-1})}
\sup_{
\begin{subarray}{c} 
v_k\in\R^{d-1}\\
\sum_{k=1}^N \abs{v_k}^2 =1
\end{subarray}} 
\left\lbrace \int_{\mathbb S^{d-2}}\bigg(\sum_{k=1}^N (\omega\cdot v_k)^2\bigg)^{1/2}\, d\mathcal H^{d-2}(\omega)\right\rbrace.
\end{align*}
The constants $C^j$ and $C^a$ satisfy $C^j,C^a\geq 1+2/\pi$.
For the tensorial embedding $\Phi$ in \eqref{eq:tensorembedding} it holds $C^j=1+2/\pi$. For $d=2$ it holds $C^a(N, d=2)=1+2/\pi$ (independently of $\Phi$) and this constant is optimal if $N\geq 2$.
\end{theorem}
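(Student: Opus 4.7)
The strategy is to parallel Theorem~\ref{thm:lift}: for $u\in BV(\Omega;\RP)$, introduce the family of candidate liftings $n_R:=L_R\circ u$ indexed by $R\in G=SO(d)$, average the Euclidean seminorm $|n_R|_{BV,\R^d}$ against the normalized Haar measure $\mu$, and extract some $R_0\in G$ such that $n:=n_{R_0}$ realises \eqref{estim:lifteuclgen}. The pointwise engine will be the Euclidean analog of Lemma~\ref{lem:averagedist}: using the same partition $G=G_n^+\sqcup G_n^-\sqcup Z_n$ and measure identities as in that lemma, I would obtain, for $n,m\in\sphere$ with $\theta=\arccos(n\cdot m)$,
\begin{equation*}
\int_G|F(Rn)-F(Rm)|_{\R^d}\,d\mu(R) = \tfrac{\pi-\theta}{\pi}|n-m|+\tfrac{\theta}{\pi}|n+m| = \tfrac{2}{\pi}\bigl[\theta\cos(\theta/2)+(\pi-\theta)\sin(\theta/2)\bigr].
\end{equation*}

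Applied pointwise along $J_u$ (with $u^\pm=[n^\pm]$) and combined with the defining supremum of $C^j(\Phi)$, Fubini will yield the jump bound $\int_G\int_{J_u}|L_R(u^+)-L_R(u^-)|_{\R^d}\,d\h^{N-1}\,d\mu(R)\leq C^j(\Phi)\,|D^j u|(\Omega)$. Evaluated at infinitesimal scale, the same identity gives the ratio $\tfrac{2}{\pi}[\theta\cos(\theta/2)+(\pi-\theta)\sin(\theta/2)]/\min(\theta,\pi-\theta)\to 1+\tfrac{2}{\pi}$ as $\theta\to 0$ (the supremum over $(0,\pi)$), and combining this with the intrinsicality statement of Proposition~\ref{prop:diffuse} and Remark~\ref{rem:intrinsic} (which identify $|D^c n_R|$ measured in $\R^d$ with $|D^c u|$ measured in $\R^D$) yields the Cantor bound by $(1+\tfrac{2}{\pi})|D^c u|(\Omega)$. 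That $C^j(\Phi)=1+\tfrac{2}{\pi}$ for the tensorial embedding will follow from the identity $|\Phi([n])-\Phi([m])|_{\R^{d\times d}}=\sin\theta$ combined with the fact that $\tfrac{2}{\pi}[\theta\cos(\theta/2)+(\pi-\theta)\sin(\theta/2)]/\sin\theta$ attains its supremum $1+\tfrac{2}{\pi}$ at $\theta\to 0$ or $\theta\to\pi$.

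The delicate ingredient will be the absolutely continuous part. For $u\in W^{1,1}(\Omega;\RP)$, fix a measurable lifting $\tilde n$ so that $n_R=\pm\tilde n$ according to the sign of $R\tilde n\cdot e_d$. On $\Omega\setminus J_R^\ast$, where $J_R^\ast:=\{x\colon R\tilde n(x)\cdot e_d=0\}$, $L_R$ acts as a local isometry, so $|\nabla n_R|_{\R^d}=|\nabla\tilde n|_{\R^d}=|\nabla u|$. Across $J_R^\ast$, $n_R$ picks up an ``artificial jump'' of amplitude $2$. The plan is to apply the coarea formula to $v_R(x):=\tilde n(x)\cdot R^T e_d$ and use that the pushforward of $\mu$ under $R\mapsto R^T e_d$ is the uniform measure on $\sphere$: this will reduce $\int_G 2\,\h^{N-1}(J_R^\ast)\,d\mu(R)$ to a double integral over $\sphere$ and $\Omega$ of level-set measures, which after a further averaging over $\mathbb S^{d-2}$ (the tangential sphere at a generic equator point) produces $(C^a(N,d)-1)\int_\Omega|\nabla u|\,dx$. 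Summing the three contributions and a mean-value argument over $G$ will yield \eqref{estim:lifteuclgen}. The main obstacle is this coarea-plus-averaging bookkeeping: one must verify $n_R\in BV(\Omega;\sphere)$ for $\mu$-a.e.\ $R$, identify $J_R^\ast$ with the extra portion of $J_{n_R}$, and handle the ``single slice at $t=0$'' issue (likely via approximation $u_\varepsilon\to u$ in $BV$). The Jensen-type loss in averaging an $\ell^2$ quantity over $\mathbb S^{d-2}$ is precisely what makes $C^a(N,d)$ strictly exceed $1+\tfrac{2}{\pi}$ when $d>2$ (cf.\ Remark~\ref{rem:Ca}), while being sharp when $d=2$ (the $\mathbb S^0$-averaging being trivial) or $N=1$.
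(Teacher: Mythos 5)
Your skeleton is right at the level of strategy: average $\abs{L_R(u)}_{BV,\R^d}$ over $G=SO(d)$ against Haar measure, prove the Euclidean analogue of Lemma~\ref{lem:averagedist}, and extract a good $R_0$. The jump identity you propose is exactly the one the paper derives, and the $C^j(\Phi)$ analysis you sketch matches. But the diffuse part is where your plan has a genuine gap, and it is not a bookkeeping nuisance one can wave away. The paper's key device, which you do not have, is the Lipschitz mollification $F_\eps$ of $F$ (cutting off the jump across the equator $\{n\cdot e_d=0\}$ on a band of width $\eps$). Because $F_\eps$ is Lipschitz, $L_{\eps,R}$ is Lipschitz, so the $BV$ chain rule applies directly: $\abs{L_{\eps,R}(u)}_{BV}$ decomposes cleanly into three integrals against $\mathcal L^N$, $\abs{D^cu}$ and $\mathcal H^{N-1}\lfloor J_u$. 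Averaging over $G$ produces the three constants, and $L^1_{loc}$ convergence $L_{\eps,R}(u)\to L_R(u)$ plus lower semicontinuity closes the argument. Your alternative of applying coarea to the sign of $R\tilde n\cdot e_d$ and interpreting the preimage of the equator as an artificial jump is plausible for $u\in W^{1,1}$, but for a general $BV$ map whose Cantor part concentrates near that preimage, the ``artificial jump set'' is no longer well-defined without something equivalent to the mollification step, and you flag but do not resolve exactly this difficulty.

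The sharper problem is your treatment of the Cantor term. You invoke Proposition~\ref{prop:diffuse} (``$\abs{D^c n_R}=\abs{D^c u}$'') and then claim a bound $(1+\tfrac2\pi)\abs{D^cu}$; that is incoherent, since the intrinsicality statement would give the coefficient $1$, not $1+\tfrac2\pi$. The extra $\tfrac2\pi$ is not an identity between Cantor parts but the averaged cost of the band $\{\abs{Rn\cdot e_d}<\eps\}$, which contributes to $\abs{D[L_{\eps,R}(u)]}$ through the $F_\eps$ derivative there. What makes the Cantor coefficient equal to $1+\tfrac2\pi$ (rather than the larger $C^a$) is Alberti's rank-one theorem: writing $D^cu=a\otimes\eta\,\abs{D^cu}$, the chain rule produces $\abs{DL_{\eps,R}(u)\,a}$ with $a$ a \emph{vector}, so the supremum defining $L$ in the paper's proof is taken over rank-one matrices $h=b\otimes\eta$ and collapses to $M=\sup_b\int_{\mathbb S^{d-2}}\abs{b\cdot\omega}\,d\mathcal H^{d-2}$, which evaluates to $2\mathcal H^{d-2}(B^{d-2})$ and yields exactly $1+\tfrac2\pi$ for any $d$. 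Your proposal never uses the rank-one structure of $D^cu$, so it cannot explain why the Cantor constant is dimension-independent while $C^a(N,d)$ is not. That missing ingredient, together with the missing $F_\eps$ mollification, is what separates your sketch from a proof.
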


\begin{remark}
For $d>2$ and $N\geq 2$, the formula for $C^a$ found in Theorem \ref{thm:lifteuclgen} leads \footnote{ For $d=3$, choosing $v_1=e_1/\sqrt 2$ and $v_2=e_2/\sqrt 2$ in the supremum of the formula for $C^a$ yields 
$$C^a(N,d=3)\geq C^a(2,d=3)\geq 1+ \frac{2}{\mathcal H^{2}(\mathbb S^{2})} \int_{\SSS^1} \frac{|\omega|}{\sqrt2}\, d\h^1(\omega)=
1+1/\sqrt 2>1+2/\pi$$ for all $N\geq 2$.} to $C^a> 1+2/\pi$, but we conjecture that the optimal constant should be $1+2/\pi$ for any $d,N\geq 2$. Note that $C^a(1,d)=1+2/\pi$ for every $d\geq 2$ (see the proof of \eqref{estim:cantor}). However, $1+2/\pi$ is not the optimal constant when estimating the $BV$ seminorm of liftings in dimension $N=1$; 
for example, the optimal constant is $\sqrt2$ in the case of the tensorial embedding \eqref{eq:tensorembedding} (see Section~\ref{sec:1d}). 
\end{remark}

As mentioned in Remark \ref{rem:Ca}, we prove that we always obtain the optimal constant $1+2/\pi$ in dimension $N\geq 2$ provided that
\footnote{Repeating the arguments at Section \ref{s:optim}, then one concludes that indeed the constant $1+2/\pi$ is achieved when using the seminorm $|||\cdot|||_{BV,\R^{D}}$ for $u$ and its liftings $n$ in any dimension $N, d\geq 2$.} the total variation is measured as the average over all directions $\omega$ of the sphere $\SSS^{N-1}$ 
of the total variation of partial derivative measure in direction $\omega$ (the jumps being measured by the Euclidean distance).
\begin{proposition}
\label{prop:new_sem} 
Let $\Omega\subset\R^N$ ($N\geq 1$) be an open set. 
For any $u\in BV(\Omega;\RP)$, there exists a lifting $n\in BV(\Omega;\sphere)$ with $u=[n]$ a.e. in $\Omega$ and
$$
|||n|||_{BV,\R^d}\leq \left(1+\frac 2\pi\right) |||u|||_{BV,\R^{d\times d}}
$$
where the seminorm $|||\cdot|||_{BV,\R^D}$ was introduced in \eqref{def:new_sem}.
\end{proposition}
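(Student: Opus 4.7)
The strategy mirrors the proof of Theorem~\ref{thm:lift}: average over rotations $R \in G = SO(d)$ using the family of liftings $L_R([n]) = F_R(n) = R^{-1}F(Rn)$ from Section~\ref{s:geod} (with $F$ as in \eqref{F}), against the normalized Haar measure $\mu$. The two required modifications are a new averaging identity adapted to the Euclidean distance in $\R^d$, and a Bourgain--Brezis--Mironescu / D\'avila type representation of the averaged seminorm $|||\cdot|||_{BV,\R^D}$ from \eqref{def:new_sem}.

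First, mimicking the proof of Lemma~\ref{lem:averagedist} by partitioning $G$ according to $G_n^\pm\cap G_m^\pm$, applying \eqref{claim:meas} together with $|n-m|=2\sin(\theta/2)$ and $|n+m|=2\cos(\theta/2)$, I would establish, for any $n,m\in\sphere$ with $\theta=\arccos(n\cdot m)\in[0,\pi]$,
\begin{equation*}
\int_G |F(Rn)-F(Rm)|_{\R^d}\, d\mu(R) = 2\Big(1-\frac{\theta}{\pi}\Big)\sin\frac{\theta}{2} + \frac{2\theta}{\pi}\cos\frac{\theta}{2}.
\end{equation*}
Since the jump cost in the tensorial embedding \eqref{eq:tensorembedding} satisfies $|\Phi([n])-\Phi([m])|_{\R^{d\times d}}=\sin\theta$, the key pointwise inequality is
\begin{equation*}
2\Big(1-\frac{\theta}{\pi}\Big)\sin\frac{\theta}{2} + \frac{2\theta}{\pi}\cos\frac{\theta}{2} \leq \Big(1+\frac{2}{\pi}\Big)\sin\theta \qquad\forall\theta\in[0,\pi].
\end{equation*}
Both sides, together with their first derivatives, vanish at $\theta=0$ and $\theta=\pi$, so the constant $1+2/\pi$ is tight to leading order at both endpoints; the inequality itself is a direct calculus exercise.

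Next, pick any measurable lifting $n$ of $u$ and any family $\{\rho_\eps\}$ of radial nonnegative mollifiers as in \eqref{def:molif}. Fubini's theorem and the pointwise inequality above give, after identifying $u$ with $\Phi(u)\in BV(\Omega;\R^{d\times d})$,
\begin{equation*}
\int_G \iint_{\Omega\times\Omega}\frac{|F_R(n(x))-F_R(n(y))|_{\R^d}}{|x-y|}\rho_\eps(|x-y|)\,dx\,dy\,d\mu(R) \leq \Big(1+\frac{2}{\pi}\Big)\iint_{\Omega\times\Omega}\frac{|u(x)-u(y)|_{\R^{d\times d}}}{|x-y|}\rho_\eps(|x-y|)\,dx\,dy.
\end{equation*}
On any bounded Lipschitz subdomain, a Bourgain--Brezis--Mironescu / D\'avila formula for $\R^D$-valued $BV$ maps --- the Euclidean-distance analogue of Theorem~\ref{prop:linkseminorm}, proved by the same averaging argument and producing the constant $K_N$ in front of the Cantor and jump parts --- identifies the $\eps\to 0$ limit of such mollified double integrals with $|||\cdot|||_{BV,\R^D}$. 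Taking $\liminf_{\eps\to 0}$ on the left, applying Fatou's lemma first in $\eps$ and then in $R$, one obtains
\begin{equation*}
\int_G |||F_R(n)|||_{BV,\R^d}\, d\mu(R) \leq \Big(1+\frac{2}{\pi}\Big)|||u|||_{BV,\R^{d\times d}},
\end{equation*}
so some $R_0\in G$ yields the desired lifting $n_{R_0}:=F_{R_0}(n)$. The case of a general open set $\Omega$ is reduced to the bounded Lipschitz one by an exhaustion argument combined with Fatou's lemma.

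The main obstacle is the sharp trigonometric inequality: because it is tight to first order at both endpoints of $[0,\pi]$, the short comparison that sufficed for Lemma~\ref{lem:averagedist} (where the geodesic-distance integrand factored cleanly as $\theta(\pi-\theta)$) is unavailable here, and some care is needed in the elementary analysis. Once that inequality is in hand, everything else is a direct transcription of the geodesic-case proof, and the physical embedding \eqref{eq:tensorembedding} turns out to be precisely the one producing the sharp constant $1+2/\pi$.
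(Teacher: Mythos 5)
Your argument reproduces the paper's first proof of Proposition~\ref{prop:new_sem} essentially verbatim: the same averaging identity $\int_G\abs{F(Rn)-F(Rm)}\,d\mu(R)=\tfrac{2}{\pi}\big((\pi-\theta)\sin\tfrac{\theta}{2}+\theta\cos\tfrac{\theta}{2}\big)$ (which is the heart of the paper's proof of \eqref{estim:jump}), the same sharp trigonometric bound by $\big(1+\tfrac2\pi\big)\sin\theta$, the same Fubini step mirroring Lemma~\ref{lem:averageBV}, and the same appeal to the Euclidean-distance version of the representation formula in Theorem~\ref{prop:linkseminorm}. The only structural difference is in how general open sets $\Omega$ are handled: the paper switches to a second, self-contained argument (a chain-rule computation on the Lipschitz truncations $L_{\eps,R}(u)$ directly yielding the directional seminorm, using \eqref{estim:cantor} and \eqref{estim:jump}), whereas you propose to exhaust $\Omega$ by bounded Lipschitz subdomains and pass to the limit by monotone convergence of $|||\cdot|||_{BV(\Omega_k)}$ before picking $R_0$ --- a workable substitute, though one should be explicit that the lifting $L_{R_0}(u)$ obtained this way is a single global map whose $BV$-membership on $\Omega$ follows a posteriori from finiteness of the averaged mollified functional (the BBM-type converse, applied componentwise).
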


\begin{proof}[Proof of Theorem \ref{thm:lifteuclgen}] The main change with respect to the geodesic case in Theorem~\ref{thm:lift} consists in computing the total variation of $BV$ liftings in Euclidean case using some truncation maps as in \cite{davilaignat03}. More precisely, for $\eps>0$ we introduce a Lipschitz approximation $F_\eps \colon\sphere\to\R^d$ of the symmetric map $F\colon \sphere\to\sphere$ introduced in \eqref{F}, that is given by
\begin{equation*}
F_\eps(n)=\begin{cases}
n & \text{ if }n\cdot e_d \geq\eps,\\
\frac{1}{\eps}(n\cdot e_d) n&\text{ if }\abs{n\cdot e_d}<\eps,\\
-n &\text{ if }n\cdot e_d\leq -\eps,
\end{cases}
\end{equation*}
so that $F_\eps$ is symmetric on $\sphere$. 
We also introduce for any $R\in SO(d)$ the map $F_{\eps,R}:\sphere\to \R^d$ given by
\begin{equation*}
F_{\eps,R}(n)=R^{-1}F_\eps(Rn),
\end{equation*}
and the corresponding map $L_{\eps,R}\colon \RP\to\R^d$, $L_{\eps,R}([n])=F_{\eps,R}(n)$ for every $n\in \sphere$. 
 Note that $F_\eps$, $F_{\eps,R}$ and $L_{\eps,R}$ are not $\sphere$-valued maps; however, this property will be satisfied almost everywhere in the limit $\eps\to 0$. We will prove 
\begin{equation}\label{estim:lifteucleps}
\int_{G} \abs{L_{\eps,R}(u)}_{BV,\R^d} d\mu(R)\leq \left(1+\frac 2\pi\right)\abs{D^c u}(\Omega) + C^j\abs{D^j u}(\Omega) + C^a\int_\Omega\abs{\nabla u}\, dx + o(1), \, \textrm{ as } \eps\to 0,
\end{equation}
which implies \eqref{estim:lifteuclgen} by arguing as in \cite{davilaignat03}. For convenience of the reader, we sketch the argument here: any rotation $R\in G$ defines an \enquote{equator}
$E_R=\left\lbrace [n]\colon Rn\in \mathbb S^{d-2}\times \lbrace 0\rbrace \subset\sphere\right\rbrace\subset\RP$,
outside of which $L_{\eps,R}$ converges towards $L_R$. For $\mu$-a.e. $R\in G$, the set $\lbrace x\in\Omega\colon u(x)\in E_R\rbrace$ has zero Lebesgue measure, which allows to deduce by the lower semicontinuity of the seminorm $|\cdot|_{BV, \R^d}$ under $L^1_{loc}$ topology that $\abs{L_R(u)}_{BV, \R^d}\leq\liminf_{\eps\to 0} \abs{L_{\eps,R}(u)}_{BV, \R^d}$. Thus from \eqref{estim:lifteucleps} we may conclude by Fatou's lemma that 
\begin{equation*}
\int_{G} \abs{L_{R}(u)}_{BV,\R^d} d\mu(R)\leq \left(1+\frac 2\pi\right)\abs{D^c u}(\Omega) + C^j\abs{D^j u}(\Omega) + C^a\int_\Omega\abs{\nabla u}\, dx,
\end{equation*}
and by the averaging theorem, one can choose a rotation $R$ for which \eqref{estim:lifteuclgen} holds for $n=L_R(u)$.

\medskip

\noindent {\it Proof of \eqref{estim:lifteucleps}}.
By the rank-one property of $BV$-maps, the Cantor part $D^c u$ of $Du$ can be decomposed as $D^c u=a\otimes \eta \abs{D^c u}$ for some $\mathbb S^{D-1}$-valued map $a$ and $\mathbb S^{N-1}$-valued map $\eta$, and the chain rule gives 
\begin{align*}
\abs{L_{\eps,R}(u)}_{BV}&=\int_\Omega \abs{DL_{\eps,R}(u)a}\: d\abs{ D^c u}  + \int_\Omega \abs{DL_{\eps,R}(u)\nabla u}\, dx \\
&\quad + \int_{J_u}\abs{L_{\eps,R}(u^+)-L_{\eps,R}(u^-)} \: d\mathcal H^{N-1}.
\end{align*}
Here the differential $DL_{\eps,R}(u)\colon T_u\RP\to\R^d$ is identified with $DL_{\eps,R}(u)\Pi\colon\R^D\to\R^d$, where $\Pi$ is the orthogonal projection $\R^D\to T_u\RP$. In particular the product $DL_{\eps,R}(u)\nabla u$ is a $d\times N$ matrix. { Moreover, we write
$\nabla u=g |\nabla u|$ for a $\R^{D\times N}$-valued map $g$ with $|g|_{\R^{D\times N}}=1$ a.e.}
Next we show that as $\eps\to 0$, for any fixed $u,u^+,u^-\in\RP$, $a\in\mathbb S^{D-1}$ and $g\in\R^{D\times N}$ with $|g|_{\R^{D\times N}}=1$, 
it holds
\begin{align}
&\int_G \abs{DL_{\eps,R}(u)g} \, d\mu(R)\leq C^a+o(1),\label{estim:continuous}\\
&\int_G \abs{DL_{\eps,R}(u)a}\: d\mu(R)  \leq \left(1+\frac 2\pi\right)+o(1),\label{estim:cantor}\\
&\int_G \abs{L_{\eps,R}(u^+)-L_{\eps,R}(u^-)}\: d\mu(R) \leq C^j\abs{\Phi(u^+)-\Phi(u^-)}+o(1),\label{estim:jump}
\end{align}
from which \eqref{estim:lifteucleps} follows (where $o(1)$ are quantities independent of $u,u^+,u^-,a$ and $g$ that converge to $0$ as $\eps\to 0$).

\medskip

\noindent {\it Proof of \eqref{estim:continuous}}. Let $n\in\sphere$ be such that $u=\overline\Phi(n)$. Then $DL_{\eps,R}(u)=DF_{\eps,R}(n)D\overline\Phi(n)^{-1}$, where $D\overline\Phi(n)$ is viewed as a map from $T_n\sphere$ to $T_u\RP$, and it is an isometry. Therefore it holds
\begin{equation*}
\abs{DL_{\eps,R}(u)g}=|{DF_{\eps,R}(n)\bar g}|,\quad\text{with }\bar g=D\overline\Phi(n)^{-1}\Pi g \in\R^{d\times N},\text{ and }\abs{\bar g}\leq \abs{g}=1.
\end{equation*}
As $DF_{\eps,R}(n)=R^{-1}DF_{\eps}(Rn)R$, we obtain
\begin{align}
\int_G\abs{DL_{\eps,R}(u)g}\: d\mu(R) & =\int_G \abs{DF_{\eps,R}(n)\bar g}\: d\mu(R) = \int_G \abs{ R^{-1}DF_\eps(Rn)R \bar g}\:d\mu(R) \nonumber\\
& = \int_{\lbrace \abs{Rn\cdot e_d} >\eps\rbrace}\abs{\bar g}\: d\mu(R)+ \frac1{\eps}\int_{\lbrace\abs{Rn\cdot e_d}\leq\eps \rbrace}\abs{   n \otimes {}^t(R\bar g) e_d+  (Rn\cdot e_d)\bar g}\: d\mu(R)\nonumber\\
& = \int_{\lbrace \abs{Rn\cdot e_d} >\eps\rbrace}\abs{\bar g}\: d\mu(R)+\frac1{\eps}\int_{\lbrace\abs{Rn\cdot e_d}\leq\eps \rbrace}\abs{ (Rn\cdot e_d)\bar g}\: d\mu(R)\nonumber \\
&\quad + \frac1{\eps}\int_{\lbrace\abs{Rn\cdot e_d}\leq\eps \rbrace}\abs{   n \otimes {}^t(R\bar g) e_d}\: d\mu(R)\nonumber\\
&\leq 1 + \frac{1}{\eps}\int_{\lbrace\abs{Rn\cdot e_d}\leq\eps\rbrace}\abs{{}^t\bar g {}^t\! R e_d}\: d\mu(R),\label{eq:diffuse1}
\end{align}
where we denoted by ${}^t(\cdot)$ the transpose of a matrix $(\cdot)$ and we used the triangle inequality and $\abs{\bar g}\leq \abs{g}=1$.
Next we compute
\begin{align}
\frac{1}{\eps}\int_{\lbrace\abs{Rn\cdot e_d}\leq\eps\rbrace}\abs{{}^t\bar g{}^t\! R e_d}\: d\mu(R)& = \frac{1}{\eps\,\mathcal H^{d-1}(\sphere)}\int_{\lbrace\omega\in\sphere\colon\abs{\omega\cdot n}\leq\eps\rbrace}\abs{{}^t\bar g \omega}\: d\mathcal H^{d-1}(\omega)\nonumber\\
&=\frac{2}{\mathcal H^{d-1}(\sphere)} \int_{\mathbb S^{d-2}\times\lbrace 0\rbrace}
\abs{{}^t\tilde g  \omega'}\: d\mathcal H^{d-2}(\omega')+o(1),\label{eq:diffuse2}
\end{align}
where $\tilde g= R_n^{-1}  \bar g$ with $R_n\in SO(d)$ such that $n=R_n e_d$. Note also that for every $\omega'\in  \mathbb S^{d-2}\times\lbrace 0\rbrace$, 
$\abs{{}^t\tilde g\omega'}=\abs{{}^t h\omega'}$ with $h=p\tilde g  \in \R^{(d-1)\times N}$, where $p$ is the matrix of the orthogonal projection $\R^d\to\R^{d-1}$.
Hence, gathering \eqref{eq:diffuse1} and \eqref{eq:diffuse2}, we find that
\begin{gather*}
\int_G\abs{DL_{\eps,R}(u)g}\: d\mu(R) 
\leq \left(1+\frac{2}{\mathcal H^{d-1}(\mathbb S^{d-1})} L\right) +o(1), \quad \textrm{as } \eps\to 0, \\
\text{where }L:=\sup_{
\begin{subarray}{c} 
h\in\R^{(d-1)\times N}\\
 \abs{h}^2 =1
\end{subarray}} \left\lbrace
\int_{\mathbb S^{d-2}}\abs{{}^t h \omega}\, d\mathcal H^{d-2}(\omega)
\right\rbrace,
\end{gather*}
with the convention that $\mathbb{S}^{d-2}=\{\pm 1\}$ for $d=2$.
Denoting by $v_1,\ldots,v_N$ the columns of $h$, this proves \eqref{estim:continuous}. Note that if $d=2$ then $L=2$ and  thus $C^a(N, d=2)=1+2/\pi$ for every $N\geq 1$. The estimate of the general case $C^a(N, d)$ for $d\geq 2$ is done below (see \eqref{new_numarul}).

\medskip

\noindent {\it Proof of \eqref{estim:cantor}}. We consider the special case of rank-one matrices $g:=a\otimes \eta$, $|a|=|\eta|=1$  in the above computation, which leads to the same estimate, with the supremum defining the constant $L$ restricted to rank-one matrices $h=b\otimes\eta$, $|b|=|\eta|=1$, hence
\begin{gather*}
\int_G\abs{DL_{\eps,R}(u)a}\: d\mu(R)\leq \left(1+\frac{2}{\mathcal H^{d-1}(\mathbb S^{d-1})} M\right)+o(1),\\
\text{where }M:=\sup_{b\in\mathbb S^{d-2}}\int_{\mathbb S^{d-2}}\abs{b\cdot\omega} \, d\mathcal H^{d-2}(\omega).
\end{gather*}
If $d=2$ then $M=2$ and we obtain \eqref{estim:cantor}. If $d\geq 3$, by rotational invariance we have by integrating over $\omega=(\omega_1, \dots, \omega_{d-1})\in \mathbb{S}^{d-2}$:
\begin{align*}
M=\int_{\mathbb S^{d-2}}\abs{\omega_{d-1}}\,d\mathcal H^{d-2}(\omega)=2\int_{B^{d-2}}\sqrt{1-\abs{\xi}^2}\frac{d\xi}{\sqrt{1-\abs{\xi}^2}}=2\mathcal H^{d-2}(B^{d-2}),
\end{align*}
and since
\begin{equation*}
\frac{\mathcal H^{d-2}(B^{d-2})}{\mathcal H^{d-1}(\mathbb S^{d-1})} =\frac{\pi^{\frac{d-2}{2}}}{\Gamma\left(\frac{d-2}{2}+1\right)}\frac{\Gamma\left(\frac{d}{2}\right)}{2\pi^{\frac{d}{2}}}=\frac{1}{2\pi},
\end{equation*}
we obtain \eqref{estim:cantor}. Note that this shows also that 
\be
\label{new_numarul}
C^a(N, d)\geq C^a(1,d)=1+\frac2{\h^{d-1}(\SSS^{d-1})}M=1+2/\pi.
\ee

\medskip

\noindent {\it Proof of \eqref{estim:jump}}. Let $n,m\in\mathbb S^{d-1}$ be such that $u^+=\overline\Phi(n)$ and $u^-=\overline\Phi(m)$. Then we find
\begin{equation*}
\int_G \abs{L_{\eps,R}(u^+)-L_{\eps,R}(u^-)}\: d\mu(R) = \int_G \abs{F(Rn)-F(Rm)}\: d\mu(R) + o(1), \quad \textrm{as } \eps\to 0,
\end{equation*}
where we used
$$\mu(\lbrace\abs{Rn\cdot e_d}\leq\eps\rbrace)=\frac{1}{\mathcal H^{d-1}(\sphere)}\mathcal H^{d-1}(\lbrace\omega\in\sphere\colon {\abs{\omega\cdot n}\leq \eps}\rbrace)=o(1), \quad \text{ as }\eps\to 0.$$
Arguing as in the proof of Lemma~\ref{lem:averagedist} and denoting by $\theta$ the angle $\theta=\arccos(n\cdot m)\in [0, \pi]$ we obtain
\begin{align*}
\int_G \abs{F(Rn)-F(Rm)}\: d\mu(R) &=\frac{\pi-\theta}{\pi}\abs{n-m}+\frac{\theta}{\pi}\abs{n+m}\\
& = \frac{\pi-\theta}{\pi}\sqrt{(1-\cos\theta)^2+\sin^2\theta}+\frac{\theta}{\pi}\sqrt{(1+\cos\theta)^2+\sin^2\theta}\\
& =\frac 2\pi \left( (\pi-\theta)\sin\frac\theta 2 + \theta \cos\frac\theta 2\right)\\
&\leq C^j \abs{\overline\Phi(n)-\overline\Phi(m)} =C^j\abs{\Phi(u^+)-\Phi(u^-)}.
\end{align*}
Finally, we check that $C^j\geq 1+2/\pi$ for every isometric embedding $\bar{\Phi}:\sphere\to \R^D$. Indeed, it suffices to consider $n=e_d$, $m=\cos\theta e_d +\sin\theta e_{d-1}$, to compute
$$
\begin{cases}
& \theta\cos\frac\theta 2 + (\pi-\theta)\sin\frac\theta 2 =\left(1+\frac\pi 2\right)\theta +o(\theta),\\
& \abs{\overline\Phi(n)-\overline\Phi(m)}=\theta\abs{D\overline\Phi(e_d)e_{d-1}}+o(\theta),
\end{cases}
\quad \textrm{ as } \, \theta\to 0^+,$$ 
and to remark that $\abs{D\overline\Phi(e_d)e_{d-1}}=1$ since $\Phi$ is an isometric embedding. 
Moreover, in the case of the tensorial embedding \eqref{eq:tensorembedding} one has
\begin{equation}
\label{numa}
\abs{\overline\Phi(n)-\overline\Phi(m)}=\frac1{\sqrt2}|n\otimes n-m\otimes m|=\sin\theta,
\end{equation}
and it can be checked that 
\begin{equation*}
\theta\cos\frac\theta 2 + (\pi-\theta)\sin\frac\theta 2 \leq \left(1+\frac \pi 2\right)\sin\theta\qquad\forall\theta\in [0,\pi],
\end{equation*}
so that $C^j=1+2/\pi$ for the embedding $\Phi$ in \eqref{eq:tensorembedding}.
\end{proof}

\begin{proof}[Proof of Proposition \ref{prop:new_sem}] We give two proofs, the first one works under the additional assumption on $\Omega$ being bounded and Lipschitz (because this method is based on Theorem \ref{prop:linkseminorm}), while the second method works for general open set $\Omega$.

\medskip

\noindent {\it First method for a bounded Lipschitz open set $\Omega$}: Considering $\SSS^{d-1}\subset \R^d$ endowed with the Euclidean distance $|\cdot|_{\R^d}$ and 
$\RP\subset \R^{d\times d}$ endowed with the distance \eqref{distant}, we will use the technique presented in the proof of Theorem \ref{thm:lift} combined with Theorem \ref{prop:linkseminorm}. More precisely, by the proof of \eqref{estim:jump}, we have that for every $n,m\in \SSS^{d-1}$:
\begin{align*}
\int_G \abs{F(Rn)-F(Rm)}_{\R^d}\: d\mu(R) =\frac{\pi-\theta}{\pi}\abs{n-m}_{\R^d}+\frac{\theta}{\pi}\abs{n+m}_{\R^d}\leq (1+2/\pi) \big|[n]-[m]\big|_{\R^{d\times d}}.
\end{align*}
This inequality combined with Lemma \ref{lem:averageBV} lead to
\begin{align*}
\int_G  \iint_{\Omega\times \Omega} \, &\frac{|L_R(u(x))-L_R(u(y))|_{\R^d}}{\abs{x-y}}\rho_\varepsilon(\abs{x-y})\, dx dy\, d\mu(R)\\ 
& \leq  (1+2/\pi)  \iint_{\Omega\times \Omega}  \frac{|u(x)-u(y)|_{\R^{d\times d}}}{\abs{x-y}}\rho_\varepsilon(\abs{x-y}) \, dx dy,
\end{align*}
where $\rho_\eps$ is any family of nonnegative radial functions. By Theorem \ref{prop:linkseminorm} and the definition \eqref{def:new_sem}, one has the representation formula for $|||u|||_{BV, \R^{d\times d}}$ respectively of $|||L_R(u)|||_{BV, \R^{d}}$ in terms of \eqref{eq:mollif} for the distance \eqref{distant}, respectively $|\cdot|_{\R^d}$. The conclusion follows as in the proof of Theorem \ref{thm:lift}.

\medskip

\noindent {\it Second method for an arbitrary open set $\Omega$}: We repeat the argument of the proof of Theorem~\ref{thm:lifteuclgen}. Within those notations, the chain rule implies for small $\eps>0$:
\begin{align*}
\int_G \inti_{\SSS^{N-1}} |D_\omega& [L_{\eps,R}(u)]|(\Omega)\, d\h^{N-1}(\omega) d\mu(R)=\int_G \inti_{\SSS^{N-1}} \int_\Omega 
\abs{D L_{\eps,R}(u) \xi} |\nabla_\omega u|\, dx d\h^{N-1}(\omega) d\mu(R)\\
&+\int_G \inti_{\SSS^{N-1}} \int_\Omega |\eta \cdot \omega| |D L_{\eps,R}(u) a| \: d\abs{ D^c u} d\h^{N-1}(\omega) d\mu(R)\\ 
&+ 
\int_G \inti_{\SSS^{N-1}} \int_{J_u} |\omega\cdot \nu| \abs{L_{\eps,R}(u^+(x))-L_{\eps,R}(u^-(x))}_{\R^d} \, d\h^{N-1}(x) d\h^{N-1}(\omega) d\mu(R),
\end{align*}
where $\nabla_\omega u=\xi |\nabla_\omega u|$, $D^c u=a\otimes \eta |D^c u|$ with $\xi=\xi(\omega), a, \eta$ are unit length maps and $\nu$ is a unit normal vector at $J_u$. By \eqref{estim:cantor} and \eqref{estim:jump} (with $C^j=1+2/\pi$), it entails that
$$\int_G \inti_{\SSS^{N-1}} |D_\omega [L_{\eps,R}(u)]|(\Omega)\, d\h^{N-1}(\omega) d\mu(R)\leq (1+2/\pi)|||u|||_{BV, \R^{d\times d}}+o(1)$$
as $\eps\to 0$. As in the proof of Theorem~\ref{thm:lifteuclgen}, one concludes that there exists a rotation $R\in G$ such that the lifting $n=L_R(u)=\lim_{\eps\to 0}L_{\eps, R}(u)$ of $u$ satisfies
$$|||L_R(u)|||_{BV, \R^{d}}\stackrel{\eqref{def:new_sem}}{=}\inti_{\SSS^{N-1}} |D_\omega L_{R}(u)|(\Omega)\, d\h^{N-1}(\omega) \leq (1+2/\pi)|||u|||_{BV, \R^{d\times d}}.$$
\end{proof}

\section{The one-dimensional case}
\label{sec:1d}

When the definition domain is an interval $\Omega=I\subset\R$, the situation is simpler, since it is possible to lift any map $u\in BV(I;\RP)$ without creating additional jumps for optimal $BV$ liftings $n$ (in contrast e.g. with the example in Section~\ref{s:optim}).
Moreover, we will prove that the optimal constant in the estimate of a $BV$ lifting in dimension $N=1$ is strictly less than the ones found in Theorems \ref{thm:lift} and \ref{thm:lifteucl}. To show this, we start by fixing an open cap around the north pole $(0, \dots, 0,1)$ of the sphere $\sphere$:
\begin{equation*}
U=\left\lbrace \omega\in\sphere \colon \dist_{\sphere}(\omega,e_d)<\pi/4\right\rbrace\subset\sphere.
\end{equation*}
This cap has the property that for any $n^\pm\in \overline U$ of the closure of $U$, the distance between $n^+$ and $n^-$ (either geodesic or Euclidean) is the smallest of the distances between any other representants of the classes $[n^\pm]\in\RP$, namely
\begin{equation*}
\dist(n^+,n^-)=\min\left\lbrace \dist (n,m)\colon n=\pm n^+,\, m=\pm n^-\right\rbrace, \qquad\forall n^\pm\in \overline U,
\end{equation*}
where $\dist=\dist_{\sphere}$ or $\dist_{\R^d}$. Moreover, for any $n^\pm\in \sphere$, one can always choose $R\in SO(d)$ and $\tau\in\lbrace \pm 1\rbrace$ such that $n^+$ and $\tau n^-$ both belong to the set $R^{-1}\cdot\overline U$.

Next we fix an isometric embedding of $\RP$ into $\R^D$ (whose choice will not play any role in the outcome) so that we may consider the $\R^{ D}$-valued vector measure $Du$ and its diffuse part $D^a u+D^c u$. We prove the following:

\begin{proposition}\label{prop:1d}
Let $I\subset\R$ be an open interval and $u\in BV(I;\RP)$. Then there exists a lifting $n\in BV(I;\sphere)$ such that
\begin{equation*}
\vert D^a n \vert (I)=\vert  D^a u \vert (I),\quad \vert D^c n \vert (I)=\vert  D^c u \vert (I), \quad J_n = J_u,
\end{equation*}
and at every jump point $x\in J_u(=J_n)$, the traces $n^\pm(x)$ belong to $R^{-1}\cdot \overline U$ for some rotation $R\in SO(d)$  depending on $x$. \end{proposition}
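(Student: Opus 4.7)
The plan is a direct construction exploiting that $I$ is a one-dimensional, simply connected domain, so that lifts can be built pointwise along $I$ and the topological obstruction responsible for extra jumps in higher dimension (cf.~the half-vortex example in Section~\ref{s:optim}) is absent. Work with the precise representative of $u$, so that the one-sided limits $u^\pm(x)$ exist at every $x\in I$ and $J_u=\lbrace x\in I \colon u^-(x)\neq u^+(x)\rbrace$ is at most countable. On the open set $I\setminus\overline{J_u}$ the map $u$ is continuous, and its connected components $\lbrace I_k\rbrace$ are open subintervals, on each of which a continuous lifting $n_k\colon I_k\to\sphere$ of $u|_{I_k}$ exists and is unique up to a global sign (standard covering argument on a simply connected domain). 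Each $n_k$ extends continuously to the endpoints of $I_k$ adjacent to $J_u$, yielding well-defined one-sided $\sphere$-traces there.

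The central step is to fix the global sign of each $n_k$ so that the cap condition holds at every jump. Propagate signs sequentially along the natural ordering of $I$: starting from an arbitrary component, at each jump $x\in J_u$ the trace $n^-(x)$ is already determined by the lifting on the component to the left, while two candidates $\pm n_k(x^+)$ are available on the right. The property of $\overline U$ recalled just before the statement provides $R=R(x)\in SO(d)$ and $\tau\in\lbrace\pm 1\rbrace$ such that $n^-(x)$ and $\tau n_k(x^+)$ both lie in the closed cap $R^{-1}\cdot\overline U$; choose the sign on the right component so that its trace at $x^+$ equals $\tau n_k(x^+)$. This yields the lifting $n\colon I\to\sphere$ with the prescribed cap property at every $x\in J_u$.

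The two measure identities follow from Proposition~\ref{prop:diffuse}: for any lifting one has $|D^a n|=|D^a u|$ and $|D^c n|=|D^c u|$ as measures on $I$. By construction $n$ is continuous on each component $I_k$, so $J_n\subseteq J_u$; conversely, $u^-(x)\neq u^+(x)$ forces $n^-(x)\neq\pm n^+(x)$, whence $J_n=J_u$. Finally $n\in BV(I;\sphere)$ because at each jump $x$ the two traces lie in a common cap $R^{-1}\cdot\overline U$ of geodesic radius $\pi/4$; by the property of $\overline U$, the $\sphere$-distance (Euclidean or geodesic) between two points of such a cap coincides with the corresponding $\RP$-distance between their classes, so the jump contributions of $n$ at $x$ match those of $u$ and $|D^j n|(I)<\infty$.

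The main obstacle is the bookkeeping when $J_u$ has accumulation points: at such $x_0\in\overline{J_u}\setminus J_u$ one must verify that the sequentially propagated signs produce a lifting with well-defined one-sided $\sphere$-traces at $x_0$, so that propagation can continue past $x_0$. This is resolved by observing that for components $I_k$ accumulating at $x_0$, the values $u|_{I_k}$ eventually lie in an arbitrarily small projective neighborhood of $u^\pm(x_0)$, inside which the cap condition at successive jumps pins down the in-cap representative uniquely modulo one initial sign; passing to the limit along $k$ yields well-defined $\sphere$-traces $n^\pm(x_0)$ and allows the sign propagation to continue.
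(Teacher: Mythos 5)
There is a genuine gap, and it is structural. Your construction is anchored on the connected components of $I\setminus\overline{J_u}$ and on propagating signs ``sequentially along the natural ordering'' of the jumps. For a general $u\in BV(I;\RP)$ the jump set $J_u$ is countable but can be \emph{dense} in a subinterval (take a jump function with jumps of summable size at all rationals of a subinterval; nothing in the $\RP$-constraint prevents arbitrarily small jumps). Then $\overline{J_u}$ has nonempty interior, $I\setminus\overline{J_u}$ omits that whole subinterval (it may even be empty), and your scheme has no components to lift on there and no ``component to the left'' or ``next jump'' to propagate across: consecutive jumps do not exist in a densely ordered set, so the sign-propagation is not a legitimate induction. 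The limiting argument you sketch in the last paragraph only treats an isolated accumulation point approached by genuine components, not the case $\mathrm{int}\,\overline{J_u}\neq\emptyset$; it cannot be repaired without changing the decomposition itself. (A smaller point: continuity of $n$ on the components only gives $J_n\subseteq\overline{J_u}$, so even in the benign case you must check that no jump of $n$ is created at points of $\overline{J_u}\setminus J_u$, which is again part of the same bookkeeping.)

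The paper's proof circumvents exactly this difficulty by not localizing on $I\setminus\overline{J_u}$ at all. Using that the sets $\Pi(R^{-1}\cdot U)$, $R\in SO(d)$, form an open cover of $\RP$, one finds $\delta>0$ such that $\abs{Du}((a,b))\leq\delta$ forces $u((a,b))\subset\Pi(R^{-1}\cdot U)$ for some $R$; then $I$ is split into \emph{finitely many} intervals $I_\ell$ with $\abs{Du}(I_\ell)\leq\delta$, and on each $I_\ell$ the single smooth lifting map $L_{R_\ell}$ is applied to $u$. The chain rule then gives at once $\abs{D^a n_\ell}=\abs{D^a u_\ell}$, $\abs{D^c n_\ell}=\abs{D^c u_\ell}$ and $J_{n_\ell}=J_{u_\ell}$ with both traces in the cap $R_\ell^{-1}\cdot \overline U$, so all (possibly densely many) jumps inside $I_\ell$ are handled simultaneously; only the finitely many gluing points $a_\ell$ require a choice of sign $\tau_\ell$, which is where your cap argument is indeed the right one. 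If you replace your component decomposition by this finite small-variation decomposition, the rest of your reasoning (use of Proposition~\ref{prop:diffuse}, the cap property at jumps) goes through.
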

\begin{proof}
As usual, $u\in BV$ is identified with its precise representative away from $J_u$, i.e., $u$ is continuous away from $J_u$ (see \cite{ambrosiofuscopallara}).
We denote by $\Pi$ the canonical projection $\Pi\colon\sphere\to\RP$. The family $\lbrace \Pi(R^{-1}\cdot U)\colon R\in SO(d)\rbrace$ is an open covering of $\RP$, and since $u\in BV$, there exists $\delta>0$ such that for any open interval $(a,b)\subset I$,
\begin{equation*}
\abs{Du}((a,b))\leq \delta \quad\Rightarrow\quad \exists R\in SO(d)\, \textrm{ such that } \,  u((a,b))\subset \Pi(R^{-1}\cdot U).
\end{equation*}
Moreover, we may find numbers $a_0<a_1<\cdots <a_k$ such that
\begin{gather*}
I=(a_0, a_k)=I_0\cup I_1 \cup \cdots I_{k-1},\qquad I_\ell=(a_\ell,a_{\ell+1}), \\
\text{and }\abs{Du}((a_\ell,a_{\ell+1}))\leq\delta,\qquad\forall \ell\in\lbrace 0,\ldots, k-1\rbrace.
\end{gather*}
At the points $a_1,\ldots,a_{k-1}$ the map $u$ is either continuous or has a jump.

For each $\ell\in\lbrace 0,\ldots,k-1\rbrace$ we denote by $u_\ell$ the restriction of $u$ to $I_\ell$. By the above there exists $R_\ell\in SO(d)$ such that the image of $u_\ell$ lies in $V_\ell:=\Pi(R_\ell^{-1}\cdot U)$. The map $L_\ell:=L_{R_\ell}$ (defined in Section~\ref{s:geod}) is smooth on that set $V_\ell$ (as $F$ is smooth on $U$), so that by the chain rule, we may define the $BV$ lifting $n_\ell=L_\ell(u_\ell)\in BV(I_\ell;\sphere)$, which takes values into $R_\ell^{-1}\cdot U$. At every $\xi\in V_\ell$ the differential $DL_\ell(\xi)$ is simply the identity on $T_\xi\RP\cong T_{L_\ell(\xi)}\sphere$, so that by the chain rule it holds
\begin{equation*}
\vert D^a n_\ell\vert (I_\ell)= \vert  D^a u_\ell\vert(I_\ell), \, \vert D^c n_\ell\vert (I_\ell)= \vert  D^c u_\ell\vert(I_\ell), \, \text{ and }\, J_{n_\ell}=J_{u_\ell} \quad \textrm{ in } I_\ell.
\end{equation*}
Note that the map $\tilde n_\ell=-n_\ell$ is also a lifting of $u_\ell$ with the same properties (with $R_\ell$ modified accordingly). Next we glue all these liftings together by choosing a sequence of signs $\tau_0,\ldots,\tau_{k-1}$ inductively, ensuring that the local liftings $\bar n_\ell=\tau_\ell n_\ell$ are such that 
$$
\begin{cases}
\bar n_{\ell-1}(a_\ell^-)=\bar n_\ell(a_\ell^+)\quad & \text{if }u\text{ is continuous at }a_\ell,\\
\text{or } \, \bar n_{\ell-1}(a_\ell^-),\bar n_\ell(a_\ell^+)\in \bar R_\ell^{-1}\cdot\overline U\;\text{for some }\bar R_\ell\in SO(d)\quad & \text{if }u\text{ has a jump at }a_\ell,
\end{cases}
$$
where $\bar n_{\ell}(a_\ell^+)$ and $\bar n_{\ell}(a_{\ell+1}^-)$ are the traces of  $\bar n_{\ell}$ at $a_\ell$, respectively at $a_{\ell+1}$.
Finally, we define the lifting $n\in BV(I;\sphere)$ by $n=\bar n_\ell$ on each interval $I_\ell$; then $n$ satisfies the desired conclusion. 
\end{proof}

\subsection{Optimal constants on an interval $\Omega$}
We distinguish two cases:

\medskip

\noindent 1. {\bf ``Geodesic" lifting}: When measuring jumps in geodesic distances, the lifting obtained in Proposition~\ref{prop:1d} gives the estimate
\begin{equation*}
\abs{n}_{BV,\sphere}\leq \abs{u}_{BV,\RP}.
\end{equation*}
Therefore, the optimal constant in dimension $N=1$ is $1$, so less than the constant found at Theorem \ref{thm:lift}.

\medskip

\noindent 2. {\bf ``Euclidean" lifting}:
When measuring jumps in Euclidean distances, since for any $n,m\in R^{-1}\cdot\overline U$ it holds $\theta:=\arccos(n\cdot m)\in [0,\pi/2]$ and
\begin{equation*}
\abs{n-m}_{\R^d}=\sqrt{(1-\cos\theta)^2+\sin^2\theta}=2\sin\frac \theta 2,
\end{equation*}
we obtain the estimate
\begin{gather*}
\abs{n}_{BV,\R^d}\leq C(\Phi)\abs{u}_{BV,\Phi},\\
C(\Phi)= \sup  \left\lbrace 
\frac{2 \sin\frac\theta 2}{\abs{\overline\Phi(n)-\overline\Phi(m)}}\colon n,m\in\sphere,\; \theta=\arccos n\cdot m \in (0,\pi/2)
\right\rbrace\geq 1.
\end{gather*}
The fact that $C(\Phi)\geq 1$ can be checked by considering $n=e_d$, $m=\cos\theta e_d +\sin\theta e_{d-1}$ so that 
$ \abs{\overline\Phi(n)-\overline\Phi(m)}=\theta+o(1)$ as $\theta\to 0^+$ (see the proof of \eqref{estim:jump}).
For the physical embedding \eqref{eq:tensorembedding}, by \eqref{numa},  the constant $C(\Phi)$ is
\begin{equation*}
C=\sup_{0\leq \theta\leq \pi/2}\frac{2\sin\frac\theta 2}{\sin\theta} =\sqrt 2,
\end{equation*}
yielding
$$\abs{n}_{BV,\R^d}\leq \sqrt2\abs{u}_{BV, \R^{d\times d}}.$$
Note that $\sqrt2<1+\frac2\pi$ which was the optimal constant $C^j$ in Theorem \ref{thm:lifteuclgen} achieved for the tensorial embedding \eqref{eq:tensorembedding}.
In particular, for $Q$-tensors (as in Corollary~\ref{cor:Qtensor}) we obtain
\begin{equation*}
s_\star \abs{n}_{BV,\R^d}\leq \abs{Q}_{BV,\mathcal S_0}.
\end{equation*}

\begin{remark}
If the definition domain is $\Omega=\mathbb{S}^1$ (so, still of dimension $1$ but not a simply connected domain), 
then the situation is different from the one explained above for an interval. In fact, it is similar to the case of dimension $N=2$ in Theorems \ref{thm:lift} and \ref{thm:lifteucl} because a $BV$ map $u:\mathbb{S}^1\to \RP$ can create additional jumps for any optimal $BV$ lifting as in the example in Section~\ref{s:optim}. (The corresponding situation for $BV$ maps with values into $\mathbb{S}^1$
was studied in \cite{IgnatCal}.)
\end{remark}

\vspace{-0.5cm}
\section*{Acknowledgements}
We thank John M. Ball for pointing out the application of our result to $SBV^p$ maps, and Peter Sternberg for raising the question of $BV$ lifting 
with prescribed trace. R.I. acknowledges partial support by the ANR project ANR-14-CE25-0009-01.

\appendix

\section{The diffuse part of the $BV$ seminorm}\label{a:diffuse}

In this first part of the Appendix, we prove the claim \eqref{eq:intrinsic} in Remark~\ref{rem:intrinsic} that the total variation of the diffuse part of $Du$ for $u\in BV(\Omega;\mathcal N)$ is independent of the choice of an embedding $\mathcal N\subset\R^D$. Furthermore, we prove Proposition~\ref{prop:diffuse} stating that the total variation of the diffuse part of $Dn$ for any lifting $n\in BV(\Omega;\sphere)$ of a map $u\in BV(\Omega;\RP)$ is independent of the lifting $n$.

\begin{lemma}\label{lem:diffuse}
Let $\mathcal N_1 \subset\R^{D_1}$ and $\mathcal N_2 \subset\R^{D_2}$ be two smooth compact submanifolds and $\Psi\colon\mathcal N_1\to\mathcal N_2$ be a smooth local isometry, that is, $\nabla\Psi(y)\colon T_y\mathcal N_1\to T_{\Psi(y)}\mathcal N_2$ is a linear isometry for all $y\in\mathcal N_1$. If $u_1\in BV(\Omega;\mathcal N_1)$ for an open set $\Omega\subset \R^{N}$, then the map $u_2=\Psi(u_1)$ belongs to $BV(\Omega;\mathcal N_2)$ and
\begin{equation*}
\abs{D^a u_1}=\abs{D^a u_2}\quad\text{and}\quad\abs{D^c u_1}=\abs{D^c u_2}\qquad\text{as measures in }\Omega.
\end{equation*}
In particular, the above equality also holds in terms of partial derivatives in direction $\omega\in \SSS^{N-1}$, i.e., $\abs{D_\omega^a u_1}=\abs{D_\omega^a u_2}$ and $\abs{D_\omega^c u_1}=\abs{D_\omega^c u_2}$ as measures in $\Omega$.
\end{lemma}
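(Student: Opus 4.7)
The plan is to invoke the Ambrosio--Dal Maso chain rule for $BV$ compositions, then upgrade its formulas using two tangentiality properties of manifold-valued $BV$ maps. First, since $\mathcal N_1$ is a smooth compact submanifold of $\R^{D_1}$, it admits a tubular neighborhood with smooth nearest-point projection, so one can extend $\Psi$ to a compactly supported map $\widetilde\Psi\in C^1(\R^{D_1};\R^{D_2})$. The extended $\widetilde\Psi$ is globally Lipschitz, so that $u_2=\widetilde\Psi(u_1)$ belongs to $BV(\Omega;\R^{D_2})$ (and takes values in $\mathcal N_2$ a.e.), and the chain rule gives
$$D^a u_2 = \nabla\widetilde\Psi(u_1)\,\nabla u_1\,\mathcal L^N,\qquad D^c u_2 = \nabla\widetilde\Psi(\tilde u_1)\,D^c u_1,$$
where $\tilde u_1$ is the approximate limit of $u_1$, defined $|D^c u_1|$-a.e.

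Next I would show that these two expressions only involve the restriction of $\nabla\widetilde\Psi$ to the tangent bundle of $\mathcal N_1$, where by hypothesis it coincides with the fiberwise linear isometry $\nabla\Psi$. For the absolutely continuous part, this follows from the classical fact that a map with values a.e.\ in $\mathcal N_1$ has its approximate gradient tangent to $\mathcal N_1$ at a.e.\ point: applying the chain rule to $g(u_1)\equiv 0$ for any $g\in C^1(\R^{D_1};\R)$ vanishing on $\mathcal N_1$ yields $\nabla g(u_1)\nabla u_1 =0$ a.e., hence the columns of $\nabla u_1(x)$ lie in $T_{u_1(x)}\mathcal N_1$. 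Writing the polar decomposition $D^c u_1 = a\otimes\eta\,|D^c u_1|$ with $|a|=|\eta|=1$, the analogous trick applied to the Cantor part yields $\nabla g(\tilde u_1)\,a = 0$ at $|D^c u_1|$-a.e.\ point; testing against a local family of such $g$'s whose gradients span the normal bundle shows $a(x)\in T_{\tilde u_1(x)}\mathcal N_1$ for $|D^c u_1|$-a.e.\ $x$.

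Combining these two tangentiality facts with the isometry property of $\nabla\Psi(y)\colon T_y\mathcal N_1\to T_{\Psi(y)}\mathcal N_2$, I would conclude $|\nabla u_2(x)|=|\nabla u_1(x)|$ for a.e.\ $x$, and $|\nabla\Psi(\tilde u_1(x))\,a(x)|=|a(x)|=1$ for $|D^c u_1|$-a.e.\ $x$. The first identity gives $|D^a u_2|=|D^a u_1|$ as measures; the second, together with the polar decomposition, gives $|D^c u_2|=|D^c u_1|$ as measures. The statement for directional derivatives follows immediately by applying the same identities to the scalar-valued measures $D_\omega u_i = (Du_i)\omega$, whose diffuse parts decompose as $(\nabla u_i)\omega\,\mathcal L^N + (D^c u_i)\omega$, and by observing that the same isometry acts on the resulting polar vectors.

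The main obstacle is justifying that the Cantor polar vector $a(x)$ is tangent to $\mathcal N_1$ at $\tilde u_1(x)$: this is not a purely pointwise statement, and it hinges on the ``everywhere'' form of the Ambrosio--Dal Maso chain rule for Cantor parts, which expresses $D^c(g(u_1))$ in terms of the precise representative $\tilde u_1$ rather than merely an a.e.\ representative. Once this is secured, the remainder of the proof is routine linear algebra on the fibers of the tangent bundle.
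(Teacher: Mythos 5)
Your proposal is correct and follows essentially the same route as the paper's proof: extend $\Psi$ to a Lipschitz map on $\R^{D_1}$, invoke the chain rule of \cite[Theorem~3.96]{ambrosiofuscopallara}, establish tangentiality of both $\nabla u_1$ and the Cantor polar vector $g$ by applying the chain rule to scalar functions vanishing on $\mathcal N_1$, and conclude using the fiberwise isometry property of $\nabla\Psi$. The concern you raise about the Cantor polar vector is resolved exactly as you anticipate, since the cited chain rule expresses the diffuse part via the approximate limit $\tilde u_1$, which is defined $|D^c u_1|$-a.e.
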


As a consequence of Lemma~\ref{lem:diffuse}, the claim \eqref{eq:intrinsic} follows by setting $u_\ell=\Phi_\ell(u)$,  $\mathcal N_\ell=\Phi_\ell(\mathcal N)$, $\ell=1,2$ and $\Psi=\Phi_2\circ\Phi_1^{-1}$. 

\begin{proof}[Proof of Lemma~\ref{lem:diffuse}] One may extend $\Psi$ to a $1$-Lipschitz map $\widetilde \Psi\colon \R^{D_1}\to\R^{D_2}$ in such a way that
\begin{equation}\label{eq:DPsitilde}
\nabla\widetilde\Psi(y)=\nabla\Psi(y)\Pi_{T_y\mathcal N_1}\qquad\forall y\in\mathcal N_1,
\end{equation}
where $\Pi_{T_y\mathcal N_1}$ denotes the orthogonal projection matrix on the tangent space $T_y\mathcal N_1$ in $\R^{D_1}$.
By the chain rule \cite[Theorem~3.96]{ambrosiofuscopallara}, as $\tilde \Psi$ is Lipschitz on $\R^{D_1}$, we have that $u_2=\widetilde\Psi(u_1)$ belongs to $BV(\Omega;\R^{D_2})$ with $u_2\in 
{\mathcal N}_2$ a.e. in $\Omega$ and  
\begin{equation}\label{eq:Dtildeu2}
\begin{aligned}
D^a u_2 &=\nabla\widetilde\Psi(u_1)\nabla u_1 \,\mathcal L^N,\\
D^c u_2 & = \nabla\widetilde\Psi(u_1)g \,\abs{D^c u_1},\quad g:=\frac{d(D^c u_1)}{d\abs{D^c u_1}}.
\end{aligned}
\end{equation}
In particular, $D_\omega^a u_2=\nabla\widetilde\Psi(u_1)\nabla_\omega u_1 \,\mathcal L^N$ and $D_\omega^c u_2 = 
\nabla\widetilde\Psi(u_1)(g\cdot \omega) \,\abs{D^c u_1}$ as measures in $\Omega$, for every direction $\omega\in \SSS^{N-1}$. 
The chain rule also implies that for any Lipschitz function $F\colon\R^{D_1}\to\R$ that vanishes on $\mathcal N_1$ (in particular, $F(u_1)=0$ in $\Omega$), it holds
\begin{align*}
\nabla F(u_1) \nabla u_1 =0\quad\mathcal L^N\text{-a.e.}\qquad
\text{and} \quad \nabla F(u_1) g =0\quad\abs{D^c u_1}\text{-a.e.}
\end{align*}
For any $z\in {\cal N}_1$ we may choose functions $\lbrace F_k\rbrace_{k=1, \dots, D_1-{\rm dim} {\cal N}_1}$ vanishing on $\mathcal N_1$ and such that $\lbrace \nabla F_k(z)\rbrace$ spans the normal space of $\mathcal N_1$ at $z$. In particular, applying this to $z=u_1(x)$, we deduce that
\begin{equation}
\label{num}
\Pi_{T_{u_1}\mathcal N_1}\nabla u_1 =\nabla u_1\quad\mathcal L^N\text{-a.e.}\qquad\text{and}\quad\Pi_{T_{u_1}\mathcal N_1}g =g\quad\abs{D^c u_1}\text{-a.e.}
\end{equation}
Combining this with \eqref{eq:DPsitilde} and the fact that $\nabla\Psi(u_1)$ is an isometry on $T_{u_1}\mathcal N_1$, we deduce that
\begin{equation*}
\abs{D\widetilde\Psi(u_1)\nabla u_1}=\abs{\nabla u_1}\quad\mathcal L^N\text{-a.e.}\qquad\text{and}\quad\abs{D\widetilde\Psi(u_1)g}=\abs{g}\quad\abs{D^c u_1}\text{-a.e.},
\end{equation*}
(as well as $\abs{D\widetilde\Psi(u_1)\nabla_\omega u_1}=\abs{\nabla_\omega u_1}$ $\mathcal L^N\text{-a.e.}$ and $\abs{D\widetilde\Psi(u_1)(g\cdot \omega)}=\abs{g\cdot \omega}$ $\abs{D^c u_1}\text{-a.e.}$)
which, recalling \eqref{eq:Dtildeu2}, implies the conclusion.
\end{proof}

\begin{proof}[Proof of Proposition~\ref{prop:diffuse}]
By \eqref{eq:intrinsic} we may fix the canonical embedding $\sphere\subset\R^d$, so that $n\in BV(\Omega;\R^d)$ satisfies $\abs{n}^2=1$ a.e. We also fix an isometric smooth embedding $\Phi\colon\RP\hookrightarrow \R^D$ and denote by $\overline\Phi\colon\sphere\to\R^D$ the induced symmetric map (i.e., $\overline \Phi(n)=\Phi([n])$ for every $n\in \SSS^{d-1}$) and we identify $\overline \Phi(\SSS^{d-1})\simeq \RP$. Then $\nabla\overline\Phi(n)\colon T_n\sphere\to T_{\overline\Phi(n)}\Phi(\RP)$ is a linear isometry for any $n\in\sphere$, and it holds $u=\overline\Phi(n)$ so we may apply Lemma~\ref{lem:diffuse} to conclude that
 $\abs{D^a u}=\abs{D^a n}$ and $\abs{D^c u}=\abs{D^c n}$  as well as $\abs{D_\omega^a u}=\abs{D_\omega^a n}$ and $\abs{D_\omega^c u}=
 \abs{D_\omega^c n}$ as measures in $\Omega$, for every direction $\omega\in \SSS^{N-1}$.
\end{proof}

\section{Representation formula for the intrinsic $BV$-energy}\label{a:linkseminorm}

In this part of the Appendix, we prove Theorem~\ref{prop:linkseminorm} which gives a representation formula for the intrinsic $BV$-energy $\abs{u}_{BV,\mathcal N}$ for any compact submanifold $\mathcal N\subset\R^D$. In the case of scalar functions $u:\Omega\to \R$ this is proved in \cite{davila02} (see also \cite{ponce04}, \cite{BBM01}). A corresponding formula for $W^{1,p}$ for $p\geq 1$ maps with values into a metric space is proved in \cite{logaritschspadaro12} and our proof is inspired by their methods.

\medskip

\noindent {\bf Some notations}:
For $u\in BV(\Omega; \cal N)$, we consider the following measures 
\begin{align*}
m^\eps&=\left(\int_\Omega \frac{\dist(u(x),u(y))}{\abs{x-y}} \rho_\eps(|x-y|)\, dy\right) dx \in\mathcal M(\Omega), \quad \textrm{ for } \eps>0,\\
\mu_\omega&=\abs{\nabla_\omega  u}\,\mathcal L^N +\abs{D^c_\omega  u} + \abs{\omega\cdot\nu}\dist(u^+, u^-)\mathcal H^{N-1}\lfloor J_{ u} \in\mathcal M(\Omega), \quad \textrm{ for } \omega\in\mathbb S^{N-1}.
\end{align*}
Here $\nu$ denotes a unit normal vector to the rectifiable jump set $J_{ u}$ of $u$, while $ u^\pm$ are the traces of $u$ along $J_{ u}$ relative to this normal vector $\nu$. Moreover $\nabla_\omega  u= (\nabla u)\omega$ is the approximate derivative of $u$ in direction $\omega$, and similarly $D_\omega^c u=(D^c  u)\omega$ is the Cantor part of the distributional derivative of $u$ in direction $\omega$.
By Alberti's rank one theorem, there exists an $\mathbb S^{D-1}\times\mathbb S^{N-1}$-valued map $(a,b)$   such that $D^c u=a\otimes b \abs{D^c u}$. Hence, $D_\omega^c u=(\omega\cdot b)a\abs{D^c u}$ and
\begin{equation*}
\inti_{\mathbb S^{N-1}}\abs{D_\omega^c u}(\Omega)\,d\mathcal H^{N-1}(\omega)=\abs{D^c u}(\Omega)\inti_{\mathbb S^{N-1}}\abs{\omega\cdot b}\,d\mathcal H^{N-1}(\omega)= K_N \abs{D^c u}(\Omega).
\end{equation*}
Therefore, Theorem~\ref{prop:linkseminorm} amounts to prove that
\begin{equation}\label{eq:linkseminorm2}
\lim_{\eps\to 0}m^\eps(\Omega) = \inti_{\mathbb S^{N-1}}\mu_\omega(\Omega)\, d\mathcal H^{N-1}(\omega).
\end{equation}
As $\Omega$ is a Lipschitz bounded open set, by even reflection across the boundary $\partial \Omega$, we may extend $u$ in a neighborhood of $\partial \Omega$ so that we may assume 
$$\textrm{$u\in BV(\Omega_{H};\mathcal N)$ for some $H>0$ and  
$\abs{Du}(\partial\Omega)=0$}$$ 
(see \cite[Proposition~3.21]{ambrosiofuscopallara}) where we denote by $$\Omega_h=\lbrace x\in\R^N\colon \dist(x,\Omega)<h\rbrace \quad \textrm{for any } \, h\in (0, H].$$ 

In the proof of \eqref{eq:linkseminorm2} we use the following two lemmas:
\begin{lemma}\label{lem:muomega}
Let $u\in BV(\Omega;\mathcal N)$. For any $\omega\in\mathbb S^{N-1}$, the measure $\mu_\omega\in {\cal M}(\Omega)$ is the least upper bound of the family of measures
\begin{equation*}
\left\lbrace \abs{D_\omega f_\xi}\right\rbrace_{\xi\in\mathcal N},\qquad \textrm{where } \, f_\xi(x)=\dist( u(x),\xi), \, x\in \Omega, \, \xi \in {\cal N},
\end{equation*}
i.e., on the one hand $\abs{D_\omega f_\xi}\leq \mu_\omega$ as measures in $\Omega$ for every $\xi\in {\cal N}$, and on the other hand every measure $\sigma\in {\cal M}(\Omega)$ with $\abs{D_\omega f_\xi}\leq \sigma$ in $\Omega$ for every $\xi\in {\cal N}$ satisfies $\mu_\omega\leq \sigma$.
As a consequence, 
\begin{equation*}
\mu_\omega(\Omega)=\sup\left\lbrace \sum_i \abs{D_\omega f_{\xi_i}}(U_i)\right\rbrace,
\end{equation*}
where the supremum is taken over all finite families $\lbrace \xi_i\rbrace\subset\mathcal N$  and $\{U_i\}$ of open subsets  with pairwise disjoint compact closures $\overline U_i\subset\Omega$. 
\end{lemma}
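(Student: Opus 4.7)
The plan is to prove Lemma~\ref{lem:muomega} in two stages: first establish the upper bound $\abs{D_\omega f_\xi}\leq \mu_\omega$ for every $\xi\in \mathcal{N}$, then prove the sup formula, from which the least upper bound property follows readily.

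For the upper bound, I would apply the chain rule for $BV$ maps to the composition $f_\xi=\dist_{\mathcal{N}}(u,\xi)$. Since $\dist_{\mathcal{N}}(\cdot,\xi)\colon\mathcal{N}\to\R$ is $1$-Lipschitz with respect to the intrinsic metric on $\mathcal{N}$, the isometric embedding $\mathcal{N}\hookrightarrow\R^D$ combined with \cite[Theorem~3.96]{ambrosiofuscopallara} (and the tangential projection argument of Lemma~\ref{lem:diffuse}) gives $\abs{\nabla_\omega f_\xi}\leq \abs{\nabla_\omega u}$ $\mathcal{L}^N$-a.e. and $\abs{D^c_\omega f_\xi}\leq \abs{D^c_\omega u}$ as measures, while the triangle inequality yields $\abs{f_\xi^+-f_\xi^-}\leq \dist_{\mathcal{N}}(u^+,u^-)$ $\h^{N-1}$-a.e. on $J_u$. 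Summing these three contributions gives $\abs{D_\omega f_\xi}\leq \mu_\omega$.

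For the sup formula, the direction $\sum_i \abs{D_\omega f_{\xi_i}}(U_i)\leq \mu_\omega(\Omega)$ is immediate from disjointness of the $U_i$ together with the upper bound. The reverse, which is the delicate part, I would prove by approximating each of the three parts of $\mu_\omega$ separately by finite disjoint families. For the jump part, by rectifiability of $J_u$ and approximate continuity of $u^\pm$ and $\nu$ along $J_u$, cover $\h^{N-1}$-most of $J_u$ by disjoint balls $B_i$ centered at jump points $x_i$ on which $u^\pm$ and $\nu$ are nearly constant; choosing $\xi_i=u^-(x_i)$ makes the jump of $f_{\xi_i}$ across $J_u\cap B_i$ essentially equal to $\dist_{\mathcal{N}}(u^+(x_i),u^-(x_i))$. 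For the absolutely continuous part, at a Lebesgue point $x_0$ of $\nabla u$ with $\nabla_\omega u(x_0)\neq 0$, pick $\xi$ on the geodesic through $u(x_0)$ in direction $-\nabla_\omega u(x_0)/\abs{\nabla_\omega u(x_0)}$ at distance smaller than the injectivity radius of $\mathcal{N}$ at $u(x_0)$; the intrinsic gradient of $\dist_{\mathcal{N}}(\cdot,\xi)$ at $u(x_0)$ is then exactly $\nabla_\omega u(x_0)/\abs{\nabla_\omega u(x_0)}$, so $\nabla_\omega f_\xi(x_0)=\abs{\nabla_\omega u(x_0)}$, and approximate continuity propagates this to a small ball $B_r(x_0)$ up to an $o(1)$ error. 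For the Cantor part, Alberti's rank-one theorem provides the decomposition $D^c u=a\otimes b\,\abs{D^c u}$, and an analogous construction with $\xi$ along a geodesic in direction $\mp a(x_0)$ at $\abs{D^c u}$-typical points yields $\abs{D^c_\omega f_\xi}\approx \abs{\omega\cdot b}\abs{D^c u}$ on small balls. A Besicovitch-type covering theorem assembles these local choices into a single pairwise disjoint finite family realizing the sup up to any prescribed $\delta>0$.

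The least upper bound characterization then follows by a routine argument: if $\sigma\in\mathcal{M}(\Omega)$ dominates every $\abs{D_\omega f_\xi}$, applying the sup formula to any open set $V\subset\Omega$ (in place of $\Omega$) yields $\mu_\omega(V)\leq \sigma(V)$, hence $\mu_\omega\leq \sigma$. The principal obstacle I anticipate is the combination step in the sup-formula proof: one must exploit the mutual singularity of $\mathcal{L}^N$, $\abs{D^c u}$, and $\h^{N-1}\lfloor J_u$, together with the fineness of Besicovitch covers, to ensure the balls chosen for the three contributions can be taken pairwise disjoint and simultaneously yield total mass within $\delta$ of all three parts of $\mu_\omega$.
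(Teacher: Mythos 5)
Your logical order is inverted relative to the paper's: you attempt to prove the sup formula directly by a covering argument and deduce the least-upper-bound property afterwards, whereas the paper proves the least-upper-bound property first — by a Radon--Nikodym argument that avoids covers entirely — and then cites the sup formula as a standard consequence of inner regularity of the $\abs{D_\omega f_\xi}$ and \cite[Definition~1.68]{ambrosiofuscopallara}. Concretely, the paper fixes a candidate dominating measure $\sigma$, introduces the densities $\sigma^a=\frac{d\sigma}{d\mathcal L^N}$, $\sigma^c=\frac{d\sigma}{d\abs{D^c_\omega u}}$, $\sigma^j=\frac{d\sigma}{d\mathcal H^{N-1}\lfloor J_u}$, and notes that for the appropriate a.e.\ $x$ (with respect to each of the three mutually singular pieces) these dominate $\abs{\nabla\gamma_\xi(u(x))\cdot\nabla_\omega u(x)}$ and the analogous Cantor and jump quantities for every fixed $\xi$. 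One then takes a \emph{pointwise} supremum over $\xi$: choosing $\xi=\exp_{u(x)}(tv)$ for small $t$ makes $\nabla\gamma_\xi(u(x))=-v$ for any prescribed unit tangent $v\in T_{u(x)}\mathcal N$, and $\xi=u^-(x)$ handles the jump part. Since the supremum is taken for each $x$ separately, one never needs a single $\xi$ to work on an open set, and $\sigma\geq\mu_\omega$ follows at once.

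Your covering route can plausibly be completed, but as written it has substantive gaps, some of which you flag yourself. First, the deduction of the least-upper-bound property uses the sup formula on every open $V\subset\Omega$, not merely $\Omega$; this is a necessary (if small) strengthening of what you set out to prove. More seriously, the covering step requires: (i) the Cantor-part estimate $\abs{D_\omega^c f_\xi}(B_r(x_0))\gtrsim(1-\delta)\abs{\omega\cdot b(x_0)}\,\abs{D^c u}(B_r(x_0))$ for a single $\xi$ on the whole ball, which needs Besicovitch differentiation of a general Radon measure together with control of how much $u$ can vary on a ball as seen by $\abs{D^c u}$ — a measure singular to $\mathcal L^N$, so Lebesgue-point arguments for $u$ do not directly apply; (ii) the three families of balls for the a.c., Cantor, and jump parts must have pairwise disjoint \emph{closures}, which mutual singularity of the underlying measures does not by itself produce; and (iii) a finite subfamily must simultaneously capture most of all three masses. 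None of these is trivial, and the paper's order of argument exists precisely to avoid them. I would recommend adopting the paper's route: establish $\abs{D_\omega f_\xi}\leq\mu_\omega$ by the chain rule as you do, prove $\mu_\omega\leq\sigma$ via the pointwise supremum over $\xi$ on the Radon--Nikodym densities of $\sigma$, and obtain the sup formula as a corollary.
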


\begin{lemma}\label{lem:upperboundm}
For $r\in (0,H)$, it holds
\begin{equation*}
\int_\Omega \dist(u(x+r\omega),u(x))\, dx \leq r \mu_\omega(\Omega_r), \quad \forall \omega\in \SSS^{N-1}.
\end{equation*}
\end{lemma}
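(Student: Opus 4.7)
The plan is to prove the estimate by slicing along the direction $\omega$, reducing to a one-dimensional statement, and then re-assembling the measures via the standard BV slicing theorem (e.g.\ \cite[Theorem~3.108]{ambrosiofuscopallara}). Write $x=y+t\omega$ with $y\in\omega^\perp$ and $t\in\R$. For $\mathcal H^{N-1}$-a.e.\ $y\in\omega^\perp$, the slice
$$u_y(t):=u(y+t\omega)\in BV((\Omega_H)_y;\mathcal N),$$
where $(\Omega_H)_y=\{t:y+t\omega\in\Omega_H\}$. Introduce the intrinsic one-dimensional BV measure
$$\sigma_y=|\dot u_y|\,\mathcal L^1+|D^c u_y|+\sum_{\tau\in J_{u_y}}\dist_{\mathcal N}(u_y^+(\tau),u_y^-(\tau))\,\delta_\tau$$
on $(\Omega_H)_y$.

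The first key step is the pointwise inequality
\begin{equation}\label{eq:slice-ineq}
\dist_{\mathcal N}(u_y(t+r),u_y(t))\leq \sigma_y\bigl((t,t+r)\bigr)\quad\text{for a.e.\ }t\in T_y:=\{t:y+t\omega\in\Omega\},
\end{equation}
which expresses the fact that $u_y$ can be viewed as a rectifiable curve in $\mathcal N$ whose length between $t$ and $t+r$ is exactly $\sigma_y((t,t+r))$, and the geodesic distance is always bounded by curve length. This is proved by taking a fine partition of $(t,t+r)$ that separates jump points and approximates the Cantor and absolutely continuous parts; on each sub-interval one uses either the triangle inequality in $\mathcal N$ (for jumps) or the fact that the Euclidean chord is $\leq$ the geodesic distance $\leq$ the arc-length via the smooth local charts of $\mathcal N$.

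Next, integrate \eqref{eq:slice-ineq} over $t\in T_y$. Since $r<H$, if $t\in T_y$ then $(t,t+r)\subset T_y^r:=\{t:y+t\omega\in\Omega_r\}$, hence by Fubini applied to the nonnegative measure $\sigma_y$,
$$\int_{T_y}\sigma_y\bigl((t,t+r)\bigr)\,dt=\int_{T_y^r}\mathcal L^1\bigl(\{t\in T_y:t<s<t+r\}\bigr)\,d\sigma_y(s)\leq r\,\sigma_y(T_y^r).$$
Integrating then over $y\in\omega^\perp$ and changing variables $x=y+t\omega$ gives
$$\int_\Omega\dist_{\mathcal N}(u(x+r\omega),u(x))\,dx\leq r\int_{\omega^\perp}\sigma_y(T_y^r)\,d\mathcal H^{N-1}(y).$$

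Finally, it remains to identify the right-hand integral with $\mu_\omega(\Omega_r)$. This is a direct consequence of the slicing theorem for $BV$ functions: for each of the three pieces of $\sigma_y$, the integrated (in $y$) total variation equals the corresponding piece of $|D_\omega u|$ restricted to $\Omega_r$. Namely, $\int |\dot u_y|\,dt\,d\mathcal H^{N-1}(y)=\int_{\Omega_r}|\nabla_\omega u|\,dx$ and $\int |D^c u_y|(T_y^r)\,d\mathcal H^{N-1}(y)=|D_\omega^c u|(\Omega_r)$, where the equality of intrinsic and Euclidean total variations for the diffuse parts is guaranteed by Lemma~\ref{lem:diffuse}; while for the jump parts, the slicing of the $(N-1)$-rectifiable set $J_u$ with the weight $\dist_{\mathcal N}(u^+,u^-)$ yields
$$\int_{\omega^\perp}\sum_{\tau\in J_{u_y}\cap T_y^r}\dist_{\mathcal N}(u_y^+,u_y^-)\,d\mathcal H^{N-1}(y)=\int_{J_u\cap\Omega_r}|\omega\cdot\nu|\dist_{\mathcal N}(u^+,u^-)\,d\mathcal H^{N-1}.$$
Summing the three contributions gives exactly $\mu_\omega(\Omega_r)$, which concludes the proof.

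The main obstacle is the one-dimensional inequality \eqref{eq:slice-ineq} in the manifold-valued setting: one must carefully handle the Cantor part so as to compare the $\mathcal N$-geodesic distance with the total variation when $\mathcal N$ is not linear. The embedding being isometric, combined with the fact that the Cantor derivative lies in the tangent space a.e.\ (as exploited in the proof of Lemma~\ref{lem:diffuse}), allows to reduce this to the classical scalar statement via a partition-refinement argument.
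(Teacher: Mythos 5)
Your global architecture is viable and genuinely different from the paper's: the paper never slices the manifold-valued map, but instead composes $u$ with the $1$-Lipschitz distance functions $\gamma_\xi=\dist_{\mathcal N}(\cdot,\xi)$, so that $f_\xi=\gamma_\xi(u)$ is a \emph{scalar} $BV$ function with $\abs{D_\omega f_\xi}\leq\mu_\omega$ (that is exactly Lemma~\ref{lem:muomega}), and then only needs the elementary one-dimensional restriction estimate $\abs{f_\xi(x+r\omega)-f_\xi(x)}\leq\abs{D_\omega f_\xi}([x,x+r\omega])$ with the choice $\xi=u(x)$, followed by the same Fubini computation $\int_\Omega\mu_\omega([x,x+r\omega])\,dx=\int_0^r\mu_\omega(\Omega+t\omega)\,dt\leq r\mu_\omega(\Omega_r)$. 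Your Fubini step in $t$ (including the inclusion of $(t,t+r)$ in the enlarged slice) and the identification of $\int_{\omega^\perp}\sigma_y(T_y^r)\,d\mathcal H^{N-1}(y)$ with $\mu_\omega(\Omega_r)$ via the $BV$ slicing theorem are correct.

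The gap is precisely at the step you yourself flag as the main obstacle: the slice inequality $\dist_{\mathcal N}(u_y(t+r),u_y(t))\leq\sigma_y((t,t+r))$ is asserted, not proved, and it carries the entire manifold-valued difficulty of the lemma. Moreover the justification you give points the wrong way: ``Euclidean chord $\leq$ geodesic distance'' is useless here, since you need an \emph{upper} bound on the geodesic distance; what must be shown is that the geodesic distance between the endpoint values is controlled by a variation in which the jumps are measured geodesically while the diffuse part is measured in the embedding, and with countably many jumps plus a possible Cantor part this requires a real argument (isolate the finitely many jumps of size $\geq\delta$, use the $(1+o(1))$-comparability of geodesic and Euclidean distances on subintervals of small oscillation, refine and pass to the limit). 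The quickest way to close it is the paper's own device: apply the scalar one-dimensional estimate to $t\mapsto\dist_{\mathcal N}(u_y(t),\xi)$ with $\xi=u_y(t_0)$, bounding its derivative by $\sigma_y$ exactly as in Lemma~\ref{lem:muomega} (triangle inequality for the jump part, $1$-Lipschitz tangential gradient together with the tangency of the diffuse derivative, as in Lemma~\ref{lem:diffuse}, for the rest). Note, however, that once you invoke this composition trick the slicing superstructure becomes redundant, since the scalar estimate can be applied directly along the segment $[x,x+r\omega]$, which is the paper's three-line proof.
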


\begin{proof}[Proof of Lemma~\ref{lem:muomega}] We will denote 
$$\gamma_\xi(z)=\dist(z,\xi) \, \,  \textrm{ for all } \xi,z\in\mathcal N.$$ By the triangle inequality, $\gamma_\xi$ is $1$-Lipschitz on $\mathcal N$, so it can be extended to a Lipschitz function on $\R^D$ such that $|\nabla \gamma_\xi|\leq 1$ on $\cal N$; we still denote this extension by $\gamma_\xi$. 
By the chain rule applied to $u:\Omega\to \R^D$, we have
\begin{align*}
\abs{D_\omega f_\xi}&=\abs{\nabla \gamma_\xi(u)\cdot \nabla_\omega u}\,\mathcal L^N + \abs{\nabla \gamma_\xi(u) \cdot D_\omega^c u} + \abs{\omega\cdot\nu}\abs{\gamma_\xi( u^+)-\gamma_\xi( u^-)}\mathcal H^{N-1}\lfloor J_{ u}\\
&\leq \abs{\nabla_\omega u}\,\mathcal L^N + \abs{D^c_\omega u} + \abs{\omega\cdot\nu}\abs{\gamma_\xi( u^+)-\gamma_\xi( u^-)}\mathcal H^{N-1}\lfloor J_{ u} \quad \textrm{as measures in } \Omega. 
\end{align*}
It yields $\abs{D_\omega f_\xi}\leq\mu_\omega$, $\forall \xi \in {\cal N}$ since  $\abs{\gamma_\xi( u^+)-\gamma_\xi( u^-)}\leq \dist(u^+,u^-)$ by triangle inequality. 

We now show that any measure $\sigma$ such that $\abs{D_\omega f_\xi}\leq\sigma$ for all $\xi\in\mathcal N$ must satisfy $\mu_\omega\leq \sigma$. Let $\sigma$ be such a measure. Then, letting
\begin{align*}
g&=\frac{d(D_\omega^c u)}{d\abs{D_\omega^c u}},\qquad
\sigma^a =\frac{d\sigma}{d\mathcal L^N},\quad \sigma^c =\frac{d\sigma}{d\abs{D_\omega^c u}},\quad \sigma^j =\frac{d\sigma}{d\mathcal H^{N-1}\lfloor J_u},
\end{align*} 
we have for all $\xi\in\mathcal N$:
\begin{equation*}
\begin{aligned}
\sigma^a(x)&\geq \abs{\nabla \gamma_\xi(u(x))\cdot \nabla_\omega u(x)}\qquad & \text{for }\mathcal L^N\text{-a.e. }x\in\Omega,\\
\sigma^c(x) &\geq \abs{\nabla \gamma_\xi(u(x)) \cdot g(x)}\qquad &\text{for }\abs{D_\omega^c u}\text{-a.e. }x\in\Omega,\\
\sigma^j(x) &\geq \abs{\omega\cdot\nu}\abs{\gamma_\xi( u^+(x))-\gamma_\xi( u^-(x))}\qquad &\text{for }\mathcal H^{N-1}\text{-a.e. }x\in J_u.
\end{aligned}
\end{equation*}
Choosing $\xi=u^-(x)$ in the last inequality gives
\begin{equation}\label{eq:sigmaj}
\sigma^j(x)\geq \abs{\omega\cdot\nu}\dist(u^+(x),u^-(x))\qquad\text{for }\mathcal H^{N-1}\text{-a.e. }x\in J_u.
\end{equation}
To use the first two inequalities we remark that given any unit vector $v\in T_{u(x)}\mathcal N$, choosing $\xi=\exp_{u(x)}(tv)$ for a small enough $t>0$ we have $\nabla \gamma_\xi(u(x))=-v$. Therefore, taking the supremum over all $\xi\in \cal N$, we deduce that
\begin{equation*}
\begin{aligned}
\sigma^a(x)&\geq 
\abs{\Pi_{T_{u(x)}\mathcal N}\, \nabla_\omega u(x)}\qquad & \text{for }\mathcal L^N\text{-a.e. }x\in\Omega,\\
\sigma^c(x) &\geq \abs{\Pi_{T_{u(x)}\mathcal N}\, g(x)}\qquad &\text{for }\abs{D_\omega^c u}\text{-a.e. }x\in\Omega,
\end{aligned}
\end{equation*}
where $\Pi_{T_{u(x)}\mathcal N}$ is the projection matrix on the tangent space ${T_{u(x)}\mathcal N}$. Recall by \eqref{num} (in the proof of Lemma~\ref{lem:diffuse}) that $\nabla_\omega u(x)\in T_{u(x)}\mathcal N$ for $\mathcal L^N$-a.e. $x\in\Omega$ and $g(x)\in T_{u(x)}\mathcal N$ for $\abs{D_\omega^c u}$-a.e. $x\in\Omega$. Hence the above becomes
\begin{align*}
\sigma^a(x)&\geq 
\abs{ \nabla_\omega u(x)}\qquad & \text{for }\mathcal L^N\text{-a.e. }x\in\Omega,\\
\sigma^c(x) &\geq  \abs{ g(x)}\qquad &\text{for }\abs{D_\omega^c u}\text{-a.e. }x\in\Omega.
\end{align*}
Combining this with $\sigma^j$ and the fact that $\mathcal L^N$, $\abs{D^c_\omega u}$ and $\mathcal H^{N-1}\lfloor J_u$ are mutually singular, we deduce that $\sigma\geq \mu_\omega$.

The last statement of the lemma is a consequence of the properties of the least upper bound of a family of measures (see e.g. \cite[Definition~1.68]{ambrosiofuscopallara}) 
and the inner regularity of the measures $\abs{D_\omega f_\xi}$. 
\end{proof}

\begin{proof}[Proof of Lemma~\ref{lem:upperboundm}] This is the equivalent of Lemma~2.2 in \cite{logaritschspadaro12}; for completeness, we present the proof.
For every $\xi\in\mathcal N$ and almost every $x\in \Omega$, using the properties of one-dimensional restrictions of $BV$ functions (see e.g. \cite[Section~3.11]{ambrosiofuscopallara}) we have
\begin{equation*}
\abs{f_\xi(x+r\omega)-f_\xi(x)}\leq \abs{D_\omega f_\xi}([x,x+r\omega])\leq \mu_\omega([x,x+r\omega]) \quad \textrm{ for a.e. } x\in \Omega,
\end{equation*}
where the last inequality follows from Lemma~\ref{lem:muomega}.
Applying this for $\xi=u(x)$ for a.e. $x\in \Omega$, it yields 
\begin{equation*}
\dist(u(x+r\omega),u(x))\leq \mu_\omega([x,x+r\omega]) \quad \textrm{ for a.e. } x\in \Omega, 
\end{equation*}
hence, integrating over $\Omega$, we conclude
\begin{align*}
\int_\Omega \dist(u(x+r\omega),u(x))\, dx & \leq \int_\Omega \mu_\omega([x,x+r\omega])\, dx
= \int_0^r \mu_\omega(\Omega+t\omega)\, dt,
\end{align*}
and the latter is clearly $\leq r\mu_\omega(\Omega_r)$.
\end{proof}

\begin{proof}[Proof of Theorem~\ref{prop:linkseminorm}] As outlined above it suffices to prove \eqref{eq:linkseminorm2}. 

\medskip

\noindent {\it Step 1. Proof of the inequality `` $\leq$" in \eqref{eq:linkseminorm2}}. \footnote{We use only at this Step 1 the assumption of $\Omega$ being Lipschitz.} We follow the ideas in \cite{logaritschspadaro12}. 
Denoting the diameter of the compact manifold $\cal N$ by ${\diam \cal N}=\sup \{\dist(u,w)\, :\, u,w\in \cal N\}$, it holds for any $h\in (0, H)$ and any $\eps>0$:
\begin{align}
m^\eps(\Omega)&\leq \int_{x\in \Omega} \bigg(\int_{\abs{z}\leq h}+\int_{|z|\geq h, \, z+x\in \Omega}\bigg)\frac{\dist(u(x),u(x+z))}{\abs{z}}\rho_\eps(\abs{z}) \, dz \,dx \nonumber \\
& \leq \int_{\mathbb S^{N-1}}\int_0^h \frac{1}{r}\int_{\Omega}\dist(u(x),u(x+r\omega)) \, dx \, \rho_\eps(r)r^{N-1} dr\, d\mathcal H^{N-1}(\omega)\nonumber \\
&\nonumber \quad \quad \quad + \frac{\diam({\cal N}) {\h^N}(\Omega)}{h} \int_{|z|\geq h} \rho_\eps(|z|)\, dz\\
&\leq \inti_{\mathbb S^{N-1}} \mu_\omega(\Omega_h)\, d\mathcal H^{N-1}(\omega)+ \frac{\diam({\cal N}) {\h^N}(\Omega)}{h} \int_{|z|\geq h} \rho_\eps(|z|)\, dz,\label{eq:upperboundseminorm}
\end{align}
where we used Lemma~\ref{lem:upperboundm} and the fact that $ \int_0^h \rho_\eps(r)r^{N-1}dr\leq \frac1{\h^{N-1}(\SSS^{N-1})}$. By the definition of mollifiers, we deduce by passing at the limsup $\eps\downarrow 0$:  
$$\limsup_{\eps\to 0}m^\eps(\Omega)\leq \inti_{\mathbb S^{N-1}} \mu_\omega(\Omega_h)\, d\mathcal H^{N-1}(\omega).$$
Finally, passing at the limit $h\downarrow 0$, as $\mu_\omega(\Omega_h)\in L^1(\omega\in \SSS^{N-1})$ (because $u\in BV(\Omega_H; \cal N)$), we conclude by the monotone convergence theorem
\begin{equation*}
\limsup_{\eps\to 0}m^\eps(\Omega)\leq \lim_{h\downarrow 0} \inti_{\mathbb S^{N-1}} \mu_\omega(\Omega_h)\, d\mathcal H^{N-1}(\omega) = \inti_{\mathbb S^{N-1}} \mu_\omega(\overline\Omega)\, d\mathcal H^{N-1}(\omega).
\end{equation*}
Recalling that $\abs{Du}(\partial\Omega)=0$ hence $\mu_\omega(\partial\Omega)=0$ for every $\omega\in \SSS^{N-1}$, we obtain the upper bound in \eqref{eq:linkseminorm2}.

\medskip

\noindent {\it Step 2. Proof of the inequality `` $\geq$" in \eqref{eq:linkseminorm2}}. Let $\omega\in \SSS^{N-1}$.
In the following we will use Lemma~\ref{lem:muomega}. For that, we fix a finite family of directions $\{\xi_i\}\subset\mathcal N$ and a finite family $\{U_i\}$ of open subsets  with pairwise disjoint compact closures $\overline U_i\subset\Omega$ (in particular, $\dist(U_i, \partial \Omega)>0$). For every $i$, let $\varphi_i\in C_c^\infty(U_i)$ with $\abs{\varphi_i}\leq 1$. Recalling that $\gamma_{\xi}(z)=\dist(z, \xi)$ for $\xi, z\in \cal N$, it holds
for any $\eps>0$ and $h\in (0,H\wedge \min_i \dist(U_i, \partial \Omega)\wedge \min_i \dist(\supp \varphi_i, \partial U_i))$:
\begin{align}
\nonumber
\langle D_\omega f_{\xi_i},\varphi_i\rangle&=\int_{U_i}(\omega\cdot\nabla\varphi_i)\, \gamma_{\xi_i}(u)\, dx =\int_{U_i} \int_{z\in \R^N}(\omega\cdot\nabla\varphi_i) \rho_\eps(|z|)\, dz\, \gamma_{\xi_i}(u)\, dx \\
\nonumber & = \int_{U_i} \int_{|z|\leq h} \frac{\varphi_i(x+|z|\omega)-\varphi_i(x)}{|z|} \rho_\eps(|z|) dz\, \gamma_{\xi_i}(u(x))\, dx\\
\nonumber &\quad \quad -\int_{U_i} \int_{|z|\leq h} \bigg(\frac{\varphi_i(x+|z|\omega)-\varphi_i(x)}{|z|}-\omega\cdot\nabla\varphi_i(x) \bigg) \rho_\eps(|z|) dz\, \gamma_{\xi_i}(u(x))\, dx \\
\nonumber &\quad \quad  + \int_{U_i} \int_{|z|\geq h} \omega\cdot\nabla\varphi_i(x) \rho_\eps(|z|) dz\, \gamma_{\xi_i}(u(x))\, dx\\
\label{e3}
&=: I + II + III.
\end{align}

\medskip

\noindent {\it Treating the term $I$}. As $h<\dist(\supp \varphi_i, \partial U_i)$, then $\varphi_i\equiv 0$ on $U_i\setminus (U_i+r\omega)$ for every $r\in (0,h)$ so that we have
\begin{align*}
\frac{I}{\h^{N-1}(\SSS^{N-1})}& = \int_{U_i} \int_0^h \frac{\varphi_i(x+r\omega)-\varphi_i(x)}{r} \rho_\eps(r) r^{N-1}dr\, \gamma_{\xi_i}(u(x))\, dx \\
& =
\int_0^h \rho_\eps(r)r^{N-1}\frac 1r 
\left[ \int_{U_i} \varphi_i(x+r\omega)\gamma_{\xi_i}(u(x)) \,dx -\int_{U_i}\varphi_i(x)\gamma_{\xi_i}(u(x))\, dx 
\right]\, dr \\
& =
\int_0^h \rho_\eps(r)r^{N-1}\frac 1r 
\left[ \int_{U_i+r\omega} \varphi_i(x)\gamma_{\xi_i}(u(x-r\omega)) \,dx -\int_{U_i}\varphi_i(x)\gamma_{\xi_i}(u(x))\, dx 
\right]\, dr\\
& =
\int_0^h \rho_\eps(r)r^{N-1}\frac 1r \bigg[\int_{U_i\cap (U_i+r\omega)} \varphi_i(x) \big(\gamma_{\xi_i}(u(x-r\omega))-\gamma_{\xi_i}(u(x))\big) \, dx \bigg]\, dr\\
& = \int_{U_i}  \varphi_i(x) \int_0^h\frac{\gamma_{\xi_i}(u(x-r\omega))-\gamma_{\xi_i}(u(x))}{r} \rho_\eps(r) r^{N-1}dr\, dx.
\end{align*}
The triangle inequality implies $\abs{\gamma_{\xi_i}(u(x-r\omega))-\gamma_{\xi_i}(u(x))}\leq\dist(u(x-r\omega),u(x))$, which combined with $|\varphi_i|\leq 1$ and the fact that $U_i-h\omega\subset \Omega$, yield:
\be
\label{e10}
\big|\, I\, \big|\leq \h^{N-1}(\SSS^{N-1}) \int_{U_i}   \int_0^h\frac{\dist(u(x-r\omega),u(x))}{r} \rho_\eps(r) r^{N-1}dr\, dx \leq m_\omega^\eps(U_i),\ee
where $m_\omega^\eps$ is the following positive measure on $\Omega$ of density
$$\frac{dm_\omega^\eps}{dx}:x\in \Omega \mapsto \h^{N-1}(\SSS^{N-1}) \int_{r>0, \, x-r\omega\in \Omega} \frac{\dist( u(x), u(x-r\omega))}{r}\rho_\eps(r)\, r^{N-1} dr.$$

\medskip

\noindent {\it Treating the term $II$}. As $|\varphi_i(x+r\omega)-\varphi_i(x)-r \omega\cdot\nabla\varphi_i(x)|\leq \frac{r^2}{2}\|\nabla^2 \varphi_i\|_{L^\infty}$, we deduce that
\be
\label{e20}
\big|\, II\, \big|\leq \frac{h}{2} \diam({\cal N}) {\h^N}(\Omega) \|\nabla^2 \varphi_i\|_{L^\infty}.
\ee

\medskip

\noindent {\it Treating the term $III$}. We have
\be
\label{e30}
\big|\, III\, \big|\leq  \diam({\cal N}) {\h^N}(\Omega) \|\nabla \varphi_i\|_{L^\infty} \int_{|z|\geq h} \rho_\eps(|z|) dz.
\ee

\medskip

\noindent {\it Conclusion}. By \eqref{e3}-\eqref{e30}, passing first to liminf as $\eps\to 0$ and then second to the limit $h\to 0$, we deduce that
$|\langle D_\omega f_{\xi_i},\varphi_i\rangle|\leq \liminf_{\eps\to 0}m_\omega^\eps(U_i)$; moreover, taking the supremum over all $\varphi_i$, this entails
$|D_\omega f_{\xi_i}|(U_i) \leq \liminf_{\eps\to 0}m_\omega^\eps(U_i)$.
Since the open sets $U_i$ have pairwise disjoint closures in $\Omega$ this implies 
\begin{equation*}
\sum_i \abs{D_\omega f_{\xi_i}}(U_i)\leq \sum_i \liminf_{\eps\to 0}m_\omega^\eps(U_i)\leq  \liminf_{\eps\to 0} \sum_i m_\omega^\eps(U_i)\leq \liminf_{\eps\to 0}m_\omega^\eps(\Omega).
\end{equation*}
Now taking the supremum over all finite families $\lbrace \xi_i\rbrace$ and $\lbrace U_i\rbrace$ we deduce by Lemma \ref{lem:muomega}:
\begin{equation*}
\mu_\omega(\Omega)\leq \liminf_{\eps\to 0}m_\omega^\eps(\Omega).
\end{equation*}
Then Fatou's lemma implies
\begin{equation*}
\inti_{\mathbb S^{N-1}} \mu_\omega(\Omega)\, d\mathcal H^{N-1}(\omega)\leq\liminf_{\eps\to 0} \inti_{\mathbb S^{N-1}} m^\eps_\omega(\Omega)\, d\mathcal H^{N-1}(\omega)\leq\liminf_{\eps\to 0}m^\eps(\Omega),
\end{equation*}
where we used 
$$
\inti_{\SSS^{N-1}} m_\omega^\eps(\Omega)\, d\h^{N-1}(\omega)=m^\eps(\Omega).$$ 
Steps 1 and 2 prove in particular that $\lim_{\eps\to 0}m^\eps(\Omega)$ exists and is given by \eqref{eq:linkseminorm2}.
\end{proof}

\bibliographystyle{acm}
\bibliography{lift}

\begin{thebibliography}{10}

\bibitem{ambrosiofuscopallara}
{\sc Ambrosio, L., Fusco, N., and Pallara, D.}
\newblock {\em Functions of bounded variation and free discontinuity problems}.
\newblock Oxford Mathematical Monographs. The Clarendon Press, Oxford
  University Press, New York, 2000.

\bibitem{ball-bedford}
{\sc Ball, J., and Bedford, S.}
\newblock Discontinuous order parameters in liquid crystal theories.
\newblock {\em Mol. Cryst. Liq. Cryst. 612}, 1 (2015), 1--23.

\bibitem{ballzarnescu11}
{\sc Ball, J., and Zarnescu, A.}
\newblock Orientability and energy minimization in liquid crystal models.
\newblock {\em Arch. Ration. Mech. Anal. 202}, 2 (2011), 493--535.

\bibitem{bedford16}
{\sc Bedford, S.}
\newblock Function spaces for liquid crystals.
\newblock {\em Arch. Ration. Mech. Anal. 219}, 2 (2016), 937--984.

\bibitem{bethuelchiron07}
{\sc Bethuel, F., and Chiron, D.}
\newblock Some questions related to the lifting problem in {S}obolev spaces.
\newblock In {\em Perspectives in nonlinear partial differential equations},
  vol.~446 of {\em Contemp. Math.} Amer. Math. Soc., Providence, RI, 2007,
  pp.~125--152.

\bibitem{BBM01}
{\sc Bourgain, J., Brezis, H., and Mironescu, P.}
\newblock Another look at {S}obolev spaces.
\newblock In {\em Optimal control and partial differential equations}. IOS,
  Amsterdam, 2001, pp.~439--455.

\bibitem{davila02}
{\sc D{\'a}vila, J.}
\newblock On an open question about functions of bounded variation.
\newblock {\em Calc. Var. Partial Differential Equations 15}, 4 (2002),
  519--527.

\bibitem{davilaignat03}
{\sc D{\'a}vila, J., and Ignat, R.}
\newblock Lifting of {BV} functions with values in {$S^1$}.
\newblock {\em C. R. Math. Acad. Sci. Paris 337}, 3 (2003), 159--164.

\bibitem{GMS}
{\sc Giaquinta, M., Modica, G., and Sou\v{c}ek, J.}
\newblock {\em Cartesian currents in the calculus of variations. {I}}, vol.~37
  of {\em Ergebnisse der Mathematik und ihrer Grenzgebiete. 3. Folge. A Series
  of Modern Surveys in Mathematics [Results in Mathematics and Related Areas.
  3rd Series. A Series of Modern Surveys in Mathematics]}.
\newblock Springer-Verlag, Berlin, 1998.
\newblock Cartesian currents.

\bibitem{giaquintamucci06}
{\sc Giaquinta, M., and Mucci, D.}
\newblock The {BV}-energy of maps into a manifold: relaxation and density
  results.
\newblock {\em Ann. Sc. Norm. Super. Pisa Cl. Sci. (5) 5}, 4 (2006), 483--548.

\bibitem{hatcher02}
{\sc Hatcher, A.}
\newblock {\em Algebraic topology}.
\newblock Cambridge University Press, Cambridge, 2002.

\bibitem{IgnatCal}
{\sc Ignat, R.}
\newblock Optimal lifting for {${\rm BV}(S^1,S^1)$}.
\newblock {\em Calc. Var. Partial Differential Equations 23}, 1 (2005), 83--96.

\bibitem{korevaarschoen93}
{\sc Korevaar, N., and Schoen, R.}
\newblock Sobolev spaces and harmonic maps for metric space targets.
\newblock {\em Comm. Anal. Geom. 1}, 3-4 (1993), 561--659.

\bibitem{logaritschspadaro12}
{\sc Logaritsch, P., and Spadaro, E.}
\newblock A representation formula for the {$p$}-energy of metric space-valued
  {S}obolev maps.
\newblock {\em Commun. Contemp. Math. 14}, 6 (2012), 1250043.

\bibitem{merlet06}
{\sc Merlet, B.}
\newblock Two remarks on liftings of maps with values into {$S^1$}.
\newblock {\em C. R. Math. Acad. Sci. Paris 343}, 7 (2006), 467--472.

\bibitem{mucci12}
{\sc Mucci, D.}
\newblock Maps into projective spaces: liquid crystal and conformal energies.
\newblock {\em Discrete Contin. Dyn. Syst. Ser. B 17}, 2 (2012), 597--635.

\bibitem{ponce04}
{\sc Ponce, A.~C.}
\newblock A new approach to {S}obolev spaces and connections to
  {$\Gamma$}-convergence.
\newblock {\em Calc. Var. Partial Differential Equations 19}, 3 (2004),
  229--255.

\end{thebibliography}

\end{document}